\def\tillam{{\wt{\lambda}}}
\def\nor{{\rm nor}}
\def\Ram{{\rm Ram}}
\def\slope{{\rm slope}}
\def\hyp{{\rm hyp}}
\def\tame{{\rm tame}}
\def\pord{{\rm pord}}
\def\logord{{\rm logord}}
\begin{document}

\author{Uri Brezner}
\address{Einstein Institute of Mathematics, The Hebrew University of Jerusalem, Giv'at Ram, Jerusalem, 91904, Israel}
\email{uri.brezner@mail.huji.ac.il}

\author{Michael Temkin}
\address{Einstein Institute of Mathematics, The Hebrew University of Jerusalem, Giv'at Ram, Jerusalem, 91904, Israel}
\email{michael.temkin@mail.huji.ac.il}

\date{\today}
\keywords{Berkovich curves, liftings, wild covers}
\thanks{This work was supported by the Israel Science Foundation (grant No. 1159/15). The second author is grateful to Luc Illusie, Ilya Tyomkin and Yakov Varshasvsky for valuable discussions. We thank Andrew Obus for pointing out some typos and informing us about Henrio's work.}
\title{Lifting problem for minimally wild covers of Berkovich curves}

\begin{abstract}
This work continues the study of residually wild morphisms $f\:Y\to X$ of Berkovich curves initiated in \cite{CTT}. The different function $\delta_f$ introduced in \cite{CTT} is the primary discrete invariant of such covers. When $f$ is not residually tame, it provides a non-trivial enhancement of the classical invariant of $f$ consisting of morphisms of reductions $\tilf\:\tilY\to\tilX$ and metric skeletons $\Gamma_f\:\Gamma_Y\to\Gamma_X$. In this paper we interpret $\delta_f$ as the norm of the canonical trace section $\tau_f$ of the dualizing sheaf $\omega_f$, and introduce a finer reduction invariant $\tiltau_f$, which is (loosely speaking) a section of $\omega_\tilf^\rmlog$. Our main result generalizes a lifting theorem of Amini-Baker-Brugall\'e-Rabinoff from the case of residually tame morphism to the case of minimally residually wild morphisms. For such morphisms we describe all restrictions the datum $(\tilf,\Gamma_f,\delta|_{\Gamma_Y},\tiltau_f)$ satisfies, and prove that, conversely, any quadruple satisfying these restrictions can be lifted to a morphism of Berkovich curves.
\end{abstract}
\maketitle

\section{Introduction}

\subsection{Motivation}
This paper continues the study of residually wild morphisms $f\:Y\to X$ of Berkovich curves initiated in \cite{CTT}. Such morphisms were called ``topologically wild'' in \cite{CTT}, but we adopt the change of notation made in the sequel paper \cite{radialization}. The main new tool introduced in \cite{CTT} is the different function $\delta_f$ of $f$, and the main results of \cite{CTT} describe its properties, including the behaviour at points of types 1 and 2. A brief summary is given in appendix~\ref{diffunsec}. The results of \cite{CTT} sufficed to describe the structure of the residually wild locus of $f$ when $\deg(f)$ equals to the residual characteristic $p$, see \cite[Theorem~7.1.4]{CTT}, and were generalized in \cite{radialization} to an arbitrary degree. However, an analogy with the residually tame case indicated that one should expect existence of finer differential invariants related to algebraic geometry over $\tilk$. Indeed, it is noted in \cite[\S1.3]{ABBR} that the discrete invariant $\Gamma_f$ alone (see \S\ref{convsec}) is too crude to satisfy a lifting theorem. The set of natural discrete (or tropical) restrictions it satisfies is not complete, and there are non-trivial restrictions on $\Gamma_f$ arising from its relation to the algebraic geometry over $\tilk$. In particular, if one combines $\tilf$ and $\Gamma_f$ into a single object, a metrized curve complex of Amini-Baker (see \cite{Amini-Baker}) or a morphism of log curves (see \S\ref{logredsec}), then no essential information is lost under the reduction of residually tame morphism, as indicated by the lifting theorem \cite[Theorem~B]{ABBR}.

It was mentioned in \cite[Remark 7.6]{ABBR} that this theory is not satisfactory in the residually wild case, and the reason is now clear: there exist new non-trivial discrete invariants of covers, the different function $\delta_f$ and the profile function $\varphi_f$ (see \cite{radialization}). Note that $\delta_f$ is determined by $\varphi_f$, and the converse is true when $f$ is not too wild in the sense that the local multiplicities $n_y$ of $f$ are not divisible by $p^2$. It was natural to expect that these discrete (or tropical) invariants possess reduction refinements related to algebraic geometry over $\tilk$. Discovering such invariants and using them to extend the lifting theorem to a non-trivial residually wild case is the main motivation of this paper. We manage to completely solve the problem for the class of {\em minimally wild} covers (see the definition in \S\ref{convsec}).

\subsection{Methods and main results}

\subsubsection{The different and bivariant differential forms}
Probably, our main discovery is a relation between the different and the dualizing sheaves in the non-archimedean context. The most non-trivial property of the different proved in \cite[Theorem~4.5.4]{CTT} is a balancing condition on its slopes at a point $y\in Y$ of type 2: $$2g(y)-2-n_y(2g(x)-2)=\sum_{v\in C_y}(-\slope_v\delta_f+n_v-1),$$ where $x=f(y)$. We called it the local RH formula because it resembles the Riemann-Hurwitz formula for the induced morphism $\tilf_y\:C_y\to C_x$ of the reduction curves, and reduces to it when $\tilf_y$ is generically \'etale. In general, this formula was proved as follows. The homomorphism $\psi_{Y/X/k}\:f^*\Omega_{X,x}\to\Omega_{Y,y}$ shifts K\"ahler norms by $\delta_f(y)$, hence rescaling by an element $c\in k$ with $|c|=\delta_f(y)$, one obtains an isomorphism $\tilpsi\:\Omega_{\wHx/\tilk}\otimes_\wHx\wHy\toisom\Omega_{\wHy/\tilk}$, unique up to multiplication by an element $\tilc\in\tilk^\times$. Slopes of the different are expressed as the logarithmic orders of zeros and poles of the induced meromorphic map $\tilpsi\:\tilf_y^*\Omega_{C_x/\tilk}\to\Omega_{C_y/\tilk}$, hence the local RH formula boils down to the relation between the degrees of both sheaves and the logarithmic orders of $\tilpsi$ at the closed points of $C_y$.

Clearly, $\tilpsi$ can be viewed as a meromorphic section $\tiltau_f$ of $(\tilf_y^*\Omega_{C_x/\tilk})'\otimes\Omega_{C_y/\tilk}$, where $\calF'=\calHom(\calF,\calO_{C_x})$ denotes the dual of a coherent $\calO_{C_x}$-module $\calF$. We call such sections {\em bivariant differential forms} with respect to $\tilf_y$. Technically, they provide the correct setting for our results, and the first version of the paper used only such language. We are very grateful to Luc Illusie for the observation that $(\tilf_y^*\Omega_{C_x/\tilk})'\otimes\Omega_{C_y/\tilk}=\omega_{\tilf_y}$ is just the dualizing sheaf of $\tilf_y$. This allowed us to reinterpret the results in the following much more conceptual way. Consider the trace bivariant form $\tau_f\in\omega_f$ corresponding to the map $f^*\Omega_Y\to\Omega_X$; it is non-zero when $f$ is generically \'etale. The different $\delta_f$ is simply $\|\tau_f\|$ with respect to the natural norm on $\omega_f$ (see Theorem~\ref{deltatau}), so after an appropriate rescaling we can define the reduction $\tiltau_{f,y}$ as an element of the reduction sheaf $\tilomega_{f,C_y}$. A direct computation shows that the latter is $\omega^\rmlog_{\tilf_y}$ (see Theorem~\ref{redomega}), hence the slopes of the different at $y$ are equal to the logarithmic orders of $\tiltau_{f,y}$ at the points of $C_y$.

\subsubsection{Characterization of $\tiltau_{f,y}$}
It turns out that $\tiltau_{f,y}$ always satisfies certain restrictions, that we completely describe when $n_y=p$. Namely, $\tiltau_{f,y}$ is exact if $|p|<\delta_f(y)<1$ and mixes an exact and a logarithmic part when $\delta_f(y)=|p|$, see Theorem~\ref{degpth} and its converse Theorem~\ref{type2fields}. In particular, this explains strange combinatorial restrictions the slopes of $\delta_f$ always satisfy, see Remark~\ref{sloperem2}.

\begin{rem}
(i) We are grateful to Andrew Obus for pointing out that an essentially equivalent invariant was defined by Y. Henrio in his thesis, \cite{Henrio}, in the case when $f$ is a $\bfZ/p\bfZ$-Galois cover and the characteristic is mixed. In this case, $f$ is determined by a $\mu_p$-torsor, whose reduction on $C_y$ is defined by Henrio to be an exact meromorphic form $\phi=dt$ when $|p|<\delta_f(y)<1$, and a logarithmic differential form $\phi=\frac{dw}{w}$ when $\delta_f(y)=|p|$. Since $C_y\to C_x$ is the geometric Frobenius in this case, $\omega_{C_y/C_x}=\Omega_{C_y}^{\otimes (1-p)}$, and it is natural to expect that the two reduction invariants should be related by $\phi^{\otimes (1-p)}=\tiltau_{f,y}$. Indeed, Andrew Obus informed us that a computation he made confirms this.

(ii) Our definition applies in general without any relation to the Galois theory. In particular, it may happen that $f$ is minimally wild at $y$ and $\tiltau_{f,y}$ is not a $(p-1)$-th power. In this case, Henrio's invariant is not defined and the extension $\calH(y)/\calH(x)$ is not Galois.
\end{rem}

\subsubsection{$p$-enhanced reduction}
Assume that $f\:Y\to X$ is minimally wild. Given a compatible pair of semistable models one naturally obtains a log reduction map $\lam\:\tilY^\rmlog\to\tilX^\rmlog$ between the reductions with natural logarithmic structures (see \S\ref{logredsec} and \S\ref{logredsec2}). We provide $\lam$ with the additional datum $(\delta_f|_{V},\{\tiltau_{f,y}\}_{y\in V})$, where $V\subset Y$ is the set of points of type 2 corresponding to the generic points of $\tilY$. We call such a datum a {\em $p$-enhancement} of $\lam$ if it satisfies certain restrictions, and our results on $\delta_f$ and $\tiltau_{f,y}$ imply that $(\delta_f|_{V},\{\tiltau_{f,y}\}_{y\in V})$ is indeed a $p$-enhancement. This is Theorem~\ref{logredth} which generalizes \cite[Theorem~A]{ABBR} to covers which are minimally wild on the source.

\subsubsection{The lifting theorem}
Our main lifting result is Theorem~\ref{mainth} which asserts that any minimally wild morphism of log-curves $\lam\:C\to D$ provided with a $p$-enhancement $(\delta,\{\phi_y\})$ can be lifted to a minimally wild morphism $Y\to X$ of Berkovich curves. It is a generalization of \cite[Theorem~B]{ABBR} but it is substantially weaker in one aspect: we construct both $X$ and $Y$, rather than construct $f$ starting with a fixed $X$ whose log reduction is $D$.

The strategy of the proof is similar to that of \cite[Theorem~B]{ABBR}. We first construct separate liftings for irreducible components of $C$ and $D$ obtaining finite covers of star-shaped curves. Then we glue these covers along annular covers $h\:A'\to A$. If $h$ is residually tame then the cover is Kummer and the $A$-isomorphism class of $A'$ is determined by $\deg(h)$. This allows gluing in \cite[Theorem~B]{ABBR} even when $X$ is fixed. In the wild case, the set of $A$-isomorphism classes is huge, but if the degree is $p$ then we manage to classify all such covers up to automorphisms of both $A$ and $A'$. We prove in Theorem~\ref{binomth} that any $p$-cover $h$ is either Kummer or given by a binomial $x=y^p+cy^n$, and $h$ is determined by the restriction of $\delta_h$ onto the skeleton of $A'$. This allows gluing for minimally wild morphisms, at cost of a simultaneous gluing of both $X$ and $Y$ from star-shaped curves and annuli.

\subsubsection{General wild morphisms}
One might wonder if the results of this paper can be extended to arbitrary wild covers. We critically used the degree-$p$ assumptions at two places: in Theorem~\ref{degpth} when describing the reduction form $\tiltau_{f,y}$, and in Theorem~\ref{binomth} when classifying annular $p$-covers. We expect that using the splitting technique of \cite{radialization}, one can extend the description of $\tiltau_{f,y}$ and the notion of $p$-enhancement to the case of arbitrary wild covers. In particular, we expect the local lifting problem for points of type 2 to be treatable by our methods. The actual bottleneck is the classification of annular $p$-covers, and in order to push the lifting theorem further one will have to replace the gluing argument completely.

\subsection{The structure of the paper}
In Section \ref{onedim} we establish some facts about one-dimensional $k$-analytic fields $K$. Our main result there is Theorem~\ref{tameth} about representing elements of $K$ as sums of a $p$-th power and a $p$-orthogonal element. In Section~\ref{sec3} we develop the theory of reduction of bivariant forms. First, we have to metrize the sheaf $\omega_f$. We introduce $\calHom$-seminorms, slightly extending the toolkit of \cite{Temkintopforms}, and show that the so-defined K\"ahler norm on $\omega_f$ is $|k^\times|$-pm (abbreviation for piecewise $|k^\times|$-multiplicative). Main results about the reduction $\tiltau_{f,y}$ of the trace form are then proved in \S\ref{reddifsec}. In Section~\ref{annulisec} we classify isomorphism classes of annular covers of degree $p$. In Section~\ref{skelsec}, we introduce $p$-enhancements of morphisms of nice log curves and construct the reduction $p$-enhancements for covers which are minimally wild on the source, see Theorem~\ref{logredth}. Finally, our main lifting theorem is proved in Section~\ref{mainsec}. The paper has two appendices: we recall the basic properties of $\delta_f$ in appendix~\ref{diffunsec}, and discuss log reduction of non-archimedean curves in appendix~\ref{logredapp}.

\subsubsection{Conventions}\label{convsec}
Let us fix some notation. Throughout this paper $k$ denotes an algebraically closed complete real-valued field of positive residual characteristic $p$. By a nice $k$-analytic curve we mean a quasi-smooth connected compact separated strictly $k$-analytic curve $X$. Then $X^\hyp$ denotes the set of points of $X$ of type different from 1, and $X_G$ denotes the $G$-topological space of $X$. By an abuse of language, the associated topological space $|X_G|$ (\cite[Section~9]{Temkintopforms}) will be also denoted $X_G$.

Furthermore, $f\:Y\to X$ will always denote a finite generically \'etale morphism between nice curves. By $\Gamma_f\:\Gamma_Y\to\Gamma_X$ we mean any skeleton of $f$ in the sense of \cite[\S3.5.9]{CTT}. The multiplicity of $f$ at a point $y\in Y_G$ will be denoted $n_y$, see \cite[\S3.2.1, \S3.4.4]{CTT}. Finally, the different function of $f$ will be denoted $\delta_f$, \cite[Section~4]{CTT}.

If $f$ is not residually tame at $y\in Y$ then we say that $y$ is a {\em wild point} of $f$. If in addition, $n_y=p$ then we say that $y$ is {\em minimally wild}. Finally, we say that $f$ is {\em minimally wild on $Y$} if it only has minimally wild points, and $f$ is {\em minimally wild} if, in addition, any fiber $f^{-1}(x)$ has at most one wild point.

\section{One-dimensional $G$-analytic fields}\label{onedim}

\subsection{Trivial valuation}\label{trivsec}
Throughout Section \ref{trivsec}, $k$ is assumed to be trivially valued. In particular, $k=\tilk$ is of characteristic $p$. For expository reasons, we prefer to consider this case separately since it is very simple and illustrating.

\subsubsection{Differential orders on $k$-curves}
Assume that $X$ is a smooth $k$-curve. The orders $\ord_v$ of meromorphic functions at closed points $v$ are induced by the trivial $\calO_X$-lattice $\calO_X$ of the sheaf of meromorphic functions $\calM_X$. Similarly, the lattice $\Omega_{X/k}$ defines differential orders $\ord_v$ on meromorphic differential forms. We will also use the {\em logarithmic order} $\logord_v=\ord_v+1$ corresponding to the lattice $\Omega^\rmlog_{X/k}$. The latter is the huge quasi-coherent module obtained by twisting $\Omega_{X/k}$ by all closed points.

\subsubsection{Bivariant forms}
Assume that $f\:Y\to X$ is a finite morphism of smooth connected $k$-curves. Since $X,Y$ are smooth, the morphism is lci and the invertible sheaf $$\omega_f=\calHom_{f^{-1}\calO_X}(f^{-1}\Omega_X,\Omega_Y)=(f^*\Omega_X)'\otimes\Omega_Y$$ is the dualizing sheaf of $f$. Sections of $\omega_f$ will be called {\em bivariant differential forms} or simply bivariant forms. There is a canonical bivariant form $\tau_f$ corresponding to the map $\psi_{Y/X/k}\:f^*\Omega_X\to\Omega_Y$. In particular, $\tau_{f}$ vanishes if and only if $f$ is not generically \'etale.

For any differential form $\phi\in\Gamma(\Omega_X)$, we have that $\tau_f(\phi)=f^*(\phi)$, where we use the convention that the image of $\phi$ in $\Gamma(f^*\Omega_X)$ is denoted also by $\phi$, and $f^*\phi=\psi_{Y/X/k}(\phi)$ is the pullback of $\phi$ to a differential form on $Y$. In particular, $\tau_{f}(d_Xh)=d_Yf^*(h)$ for any $h\in\Gamma(\calO_X)$. With tensor notation, $\tau_{f}=\phi'\otimes f^*(\phi)$, where $\phi$ is a non-zero form on $X$ and $\phi'\in\Gamma(\Omega'_X)$ is its dual.

\begin{rem}
The element $\tau_f$ is the image of 1 under the trace map $$\calO_Y=f^*\calO_X\stackrel{t_f}{\to}f^!\calO_X=\omega_f.$$
This explains its importance for Grothendieck's duality. We will often call $\tau_f$ the {\em trace (bivariant) form} of $f$.
\end{rem}

\subsubsection{Orders of bivariant forms}
The generic stalk of $\omega_f$ is the one-dimensional $k(Y)$-vector space $\omega_{k(Y)/k(X)}=\Hom_{k(X)}(\Omega_{k(X)/k},\Omega_{k(Y)/k})$. Its elements will be called meromorphic bivariant forms. In addition to the $\calO_Y$-lattice $\omega_f$ of $\omega_{k(Y)/k(X)}$, we will also consider the natural logarithmic lattice $$\omega_f^\rmlog:=\calHom_{f^{-1}\calO_X}(f^{-1}\Omega^\rmlog_X,\Omega^\rmlog_Y).$$ Note that $\omega_f^\rmlog$ does not possess a tensor description because $\Omega^\rmlog_X$ is ''too large'' and its ``dual'' $\calHom_{\calO_X}(\Omega^\rmlog_X,\calO_X)$ vanishes.

For any closed point $v\in Y$ we obtain two induced orders on $\omega_{k(Y)/k(X)}$. Similarly to differential orders, they will be denoted $\ord_v$ and $\logord_v$. This will not lead to any confusion. Finally, it is easy to see that $\logord_v=\ord_v-e_v+1$, where $e_v$ is the ramification index at $v$.

\subsubsection{Laurent power series}
The orders at $v$ can be defined purely formally-locally via analogous constructions for fields $K=k((t))$. Note that $K$ is the only $k$-analytic field with non-trivial valuation (and its type is 3). Since the group of values is discrete, we will use the classical additive valuation instead of real valuations. It will be called the {\em order} and denoted $\ord\:|K|\to\bfZ\cup\{\infty\}$.

\subsubsection{$p$-order}\label{pordsec}
For any element $x\in K$ there exists $c\in K$ such that the order of $x-c^p$ is maximal. We say that $\pord(x):=\ord(x-c^p)$ is the {\em $p$-order} of $x$. It is infinite if and only if $x\in K^p$, and it lies in the set $\bfZ\setminus p\bfZ$ otherwise. Obviously, $\pord(x)\ge\ord(x)$ and the equality holds if and only if $\ord(x)\notin p\bfZ$. In fact, the $p$-order of $x=\sum_n c_nt^n$ is equal to the minimal $n\in\bfZ\setminus p\bfZ$ with a non-zero $c_n$.

\subsubsection{Order of differential forms}
The completed module of differentials $\hatOmega_{K/k}$ is a one-dimensional $K$-vector space generated by $dt$. The differential $\hatd_K\:K\to \hatOmega_{K/k}$ will be denoted by $d$ if no confusion is possible. There are two natural integral structures: the lattice of integral differential forms $\hatOmega_{\Kcirc/k}=\Kcirc dt$ generated over $\Kcirc$ by the elements $dx$ with $x\in\Kcirc$, and the lattice of logarithmic forms $\hatOmega_{\Kcirc/k}^\rmlog=\Kcirc\frac{dt}{t}$ generated over $\Kcirc$ by the logarithmic forms $\frac{dx}{x}$ with $x\in K^\times$. They define two orders on $\hatOmega_{K/k}$ that will be called the {\em differential order} and the {\em logarithmic differential order}, respectively. The word ``differential'' will usually be omitted. Clearly, $$\ord(f)=\ord(fdt)=\logord(fdt)-1.$$ We will write $\ord_K$ and $\logord_K$ when $K$ is not clear from the context.

\begin{rem}\label{ordrem}
(i) The differential orders are additive versions of the K\"ahler seminorm introduced in \cite{Temkintopforms}. The shift between the two is precisely the order of a uniformizer. In the non-discrete case the logarithmic seminorm coincides with the non-logarithmic one, see \cite{Temkintopforms}.

(ii) The formal order is compatible with the differential order on curves: if $X$ is a $k$-curve, $K=k(X)$ and $v\in X$, then $\ord_v(\phi)=\ord_{\wh{k(X)_v}}(\phi)$ for $\phi\in\Omega_K$. Indeed, $\Omega_{K/k}\into\hatOmega_{\hatK/k}$ is an isometry by \cite[Theorem~5.6.6]{Temkintopforms}.
\end{rem}

\subsubsection{Relation to the $p$-order}
The differential order provides a natural interpretation of the $p$-order:

\begin{lem}\label{pordlem}
Let $K=k((t))$. Then $\pord(x)=\logord(dx)$ for any element $x\in K$.
\end{lem}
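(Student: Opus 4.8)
The plan is to compute $\logord(dx)$ directly from the description of $\pord$ given just above in \S\ref{pordsec}, writing $x = \sum_n c_n t^n$ with the $c_n \in k$. First I would reduce to the case $x \notin K^p$: if $x \in K^p$ then $\pord(x) = \infty$, and on the other hand $x = c^p$ for some $c \in K$ gives $dx = p c^{p-1}\,dc = 0$ in characteristic $p$, so $\logord(dx) = \infty$ as well. So assume $x \notin K^p$, and recall that $\pord(x)$ is the minimal $n \in \bfZ \setminus p\bfZ$ with $c_n \neq 0$.

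Next I would compute $dx$. Since $d$ kills the $p$-th power part $\sum_{p \mid n} c_n t^n = \big(\sum_m c_{pm}^{1/p} t^m\big)^p$ (using that $k = \tilk$ is perfect of characteristic $p$, so each $c_{pm}$ has a $p$-th root), we get $dx = \sum_{p \nmid n} n c_n t^n \frac{dt}{t} = \sum_{p\nmid n} n c_n t^{n-1}\,dt$. Let $N = \pord(x)$, the least $n$ with $p \nmid n$ and $c_n \neq 0$. For every such $n$ we have $p \nmid n$, hence $n$ is invertible in $k$ and $n c_n \neq 0$. Therefore the lowest-order term of $dx$ is $N c_N t^{N-1}\,dt$, giving $\ord(dx) = \ord(N c_N t^{N-1}\,dt) = N - 1$. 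By the identity $\logord(\phi) = \ord(\phi) + 1$ (equivalently $\logord(f\,dt) = \ord(f) + 1$ recorded just before Remark~\ref{ordrem}), we conclude $\logord(dx) = (N-1) + 1 = N = \pord(x)$, as claimed.

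The only place requiring any care—and the "main obstacle," though it is mild—is making sure the cancellation in $dx$ happens exactly at the terms with $p \mid n$ and not at any others: this is precisely the point where characteristic $p$ enters, since $n c_n$ vanishes iff $p \mid n$ (as $c_n \neq 0$ and $k$ has characteristic $p$), so no spurious cancellation can raise the order. It is also worth noting that this argument is insensitive to whether we work with the formal field $K = k((t))$ or with $\hatOmega$; by Remark~\ref{ordrem}(ii) the formal computation is the relevant one, and $dt/t$ is a generator of the logarithmic lattice $\hatOmega^\rmlog_{\Kcirc/k}$, so reading off $\logord$ amounts to reading off the $t$-adic order of the coefficient of $dt/t$, which is what the computation above does.
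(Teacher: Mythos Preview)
Your proof is correct and follows essentially the same approach as the paper's own one-line argument: both write $x=\sum_n c_nt^n$, observe $dx=\sum_n nc_n t^n\frac{dt}{t}$, and read off that the minimal $n$ with $nc_n\neq 0$ is the minimal $n\in\bfZ\setminus p\bfZ$ with $c_n\neq 0$. You have simply made explicit the case $x\in K^p$ and the reason no spurious cancellation occurs, which the paper leaves implicit.
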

\begin{proof}
Let $x=\sum_n c_nt^n$. We observed in \S\ref{pordsec} that $\pord(x)$ is the smallest $n\in\bfZ\setminus p\bfZ$ with $c_n\neq 0$. Clearly this number coincides with $\logord(dx)=\ord(\sum_n nc_nt^n)$.
\end{proof}

\subsubsection{The different}\label{diffsec}
Let $L/K$ be a finite separable extension. Then the different $\delta_{L/K}$ is the order of $\Ann(\Omega_{\Lcirc/\Kcirc})$ and the logarithmic different $\delta^\rmlog_{L/K}$ is the order of $\Ann(\Omega^\rmlog_{\Lcirc/\Kcirc})$. So, the following lemma reduces to unwinding the definitions.

\begin{lem}\label{diflem0}
Let $L/K$ be as above, then

(i) The different measures the difference between the differential order on $\hatOmega_{L/k}$ and the pullback of the differential order on $\hatOmega_{K/k}$, namely $$\delta_{L/K}=\ord_L(\omega_L)-e_{L/K}\ord_K(\omega)$$ for any non-zero $\omega\in\hatOmega_{K/k}$ and its image $\omega_L\in\hatOmega_{L/k}$.

(ii) The same relation holds for the logarithmic different: $$\delta^\rmlog_{L/K}=\logord_L(\omega_L)-e_{L/K}\logord_K(\omega).$$

(iii) The two differents are related by $\delta^\rmlog_{L/K}=\delta_{L/K}-e_{L/K}+1$.
\end{lem}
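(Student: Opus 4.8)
The plan is to work entirely with the formal-local model $K\cong k((t))$, $L\cong k((s))$: since $k$ is algebraically closed, $L/K$ is totally ramified of degree $e:=e_{L/K}$, and after choosing a uniformizer $s$ of $L$ we have $t=us^e$ for some $u\in\Lcirc^\times$. First I would record the free generators $\hatOmega_{\Kcirc/k}=\Kcirc\,dt$ and $\hatOmega_{\Lcirc/k}=\Lcirc\,ds$, together with the fact that the logarithmic lattices are the twists $\hatOmega^\rmlog_{\Kcirc/k}=\tfrac1t\hatOmega_{\Kcirc/k}$ and $\hatOmega^\rmlog_{\Lcirc/k}=\tfrac1s\hatOmega_{\Lcirc/k}$; consequently $\logord=\ord+1$ on $\hatOmega_{K/k}$ and on $\hatOmega_{L/k}$, a uniformizer having order $1$. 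Writing $dt=g\,ds$ with $g=\partial t/\partial s\in\Lcirc$ and $\tfrac{dt}{t}=w\,\tfrac{ds}{s}$ with $w=\tfrac{s}{t}\,g\in\Lcirc$, I get $\ord_L(dt)=\ord_L(g)$ and $\logord_L(\tfrac{dt}{t})=\ord_L(w)$, and note that $\ord_K(dt)=0$, $\logord_K(\tfrac{dt}{t})=0$.

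The key step is to identify the relative differential modules. Right-exactness of Kähler differentials for $k\to\Kcirc\to\Lcirc$ presents $\Omega_{\Lcirc/\Kcirc}$ as the cokernel of $\hatOmega_{\Kcirc/k}\otimes_{\Kcirc}\Lcirc\to\hatOmega_{\Lcirc/k}$, which is $\Lcirc\,ds/\Lcirc g\,ds\cong\Lcirc/(g)$; the logarithmic version presents $\Omega^\rmlog_{\Lcirc/\Kcirc}$ analogously as $\Lcirc\tfrac{ds}{s}/\Lcirc w\tfrac{ds}{s}\cong\Lcirc/(w)$. Unwinding the definitions of the two differents as orders of annihilators then gives $\delta_{L/K}=\ord_L(g)=\ord_L(dt)$ and $\delta^\rmlog_{L/K}=\ord_L(w)=\logord_L(\tfrac{dt}{t})$, which, because $\ord_K(dt)=0$ and $\logord_K(\tfrac{dt}{t})=0$, is exactly (i) and (ii) for the distinguished forms $\omega=dt$ and $\omega=\tfrac{dt}{t}$.

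Finally I would upgrade to arbitrary $\omega$. For nonzero $\omega\in\hatOmega_{K/k}$ write $\omega=h\,dt$ with $h\in K^\times$; its image in $\hatOmega_{L/k}$ is $\omega_L=hg\,ds$, so using $\ord_L(h)=e_{L/K}\ord_K(h)$ for $h\in K$,
$$\ord_L(\omega_L)-e_{L/K}\ord_K(\omega)=\bigl(\ord_L(h)+\ord_L(dt)\bigr)-e_{L/K}\ord_K(h)=\ord_L(dt)=\delta_{L/K},$$
which is (i); writing instead $\omega=h\tfrac{dt}{t}$ and repeating verbatim with $\logord$ replacing $\ord$ gives (ii). Part (iii) then follows by subtracting (i) from (ii) and using $\logord_L=\ord_L+1$, $\logord_K=\ord_K+1$: the difference is $1-e_{L/K}$. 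The one step that is more than valuation bookkeeping is the presentation of $\Omega_{\Lcirc/\Kcirc}$ and $\Omega^\rmlog_{\Lcirc/\Kcirc}$ via the (logarithmic) conormal sequences — one has to make sure the completed and logarithmic differential modules sit in the expected right-exact sequences — so that is where I would be most careful, though for complete DVRs with the standard log structures this is routine.
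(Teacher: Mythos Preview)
Your proof is correct and is precisely the ``unwinding the definitions'' that the paper alludes to without spelling out. You identify $\Omega_{\Lcirc/\Kcirc}$ (resp.\ $\Omega^\rmlog_{\Lcirc/\Kcirc}$) as the cokernel of the (logarithmic) conormal map, read off the annihilator as $(g)$ (resp.\ $(w)$), and then reduce the general case to $\omega=dt$ by linearity; part (iii) falls out of the shift $\logord=\ord+1$ on both sides. This is exactly the intended argument, just written out in full.
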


Choosing $\omega=dt$ we obtain the following specific way to compute the different.


\begin{cor}
Let $K=k((t))$ and $L/K$ be as above. Then $$\delta_{L/K}=\ord_L(dt)=\pord_L(t)-1.$$
\end{cor}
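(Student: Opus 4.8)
The plan is to simply chain together the results already assembled in this section. By Lemma~\ref{diflem0}(i) applied with $\omega=dt$, we have $\delta_{L/K}=\ord_L(dt)-e_{L/K}\ord_K(dt)$; but $\ord_K(dt)=\ord_K(t'\,dt)$ with $t$ a uniformizer, so by the normalization $\ord_K(dt)=0$ (this is the content of $\hatOmega_{\Kcirc/k}=\Kcirc\,dt$, i.e.\ $dt$ generates the lattice, hence has order $0$). Therefore $\delta_{L/K}=\ord_L(dt)$, giving the first equality.

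For the second equality, I would invoke Lemma~\ref{pordlem}, which identifies $\pord_L(t)=\logord_L(dt)$, together with the relation $\ord=\logord-1$ on differential forms in $\hatOmega_{L/k}$ (stated just before Remark~\ref{ordrem}): explicitly $\logord_L(dt)=\ord_L(dt)+1$. Combining, $\ord_L(dt)=\logord_L(dt)-1=\pord_L(t)-1$, which is exactly the second claimed identity. One small caveat to address: $\pord_L$ refers to the $p$-order relative to the field $L=\tilk((s))$ for a uniformizer $s$ of $L$, and $t$ is viewed as an element of $L$ via the inclusion $K\hookrightarrow L$; Lemma~\ref{pordlem} applies verbatim to $x=t\in L$.

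I do not anticipate any genuine obstacle here — this corollary is explicitly flagged in the text as the result of ``choosing $\omega=dt$'' in Lemma~\ref{diflem0} and ``unwinding the definitions,'' so the proof is a one- or two-line bookkeeping exercise. The only point requiring a word of care is the implicit claim that $\ord_K(dt)=0$ for a uniformizer $t$; I would make this explicit by noting that $dt$ generates $\hatOmega_{\Kcirc/k}$ as a $\Kcirc$-module by definition, so it has differential order $0$ by construction of the order function. After that, the two equalities follow immediately from Lemma~\ref{diflem0}(i), Lemma~\ref{pordlem}, and the elementary shift $\logord=\ord+1$.

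\begin{proof}
Apply Lemma~\ref{diflem0}(i) with $\omega=dt$. Since $t$ is a uniformizer of $K$, the form $dt$ generates the lattice $\hatOmega_{\Kcirc/k}=\Kcirc\,dt$, hence $\ord_K(dt)=0$. Therefore $\delta_{L/K}=\ord_L(dt)$. Finally, viewing $t$ as an element of $L$, Lemma~\ref{pordlem} gives $\pord_L(t)=\logord_L(dt)=\ord_L(dt)+1$, so $\ord_L(dt)=\pord_L(t)-1$.
\end{proof}
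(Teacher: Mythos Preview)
Your proof is correct and follows exactly the route the paper indicates: the sentence preceding the corollary says ``choosing $\omega=dt$ we obtain the following specific way to compute the different,'' and you do precisely that, supplying the bookkeeping details ($\ord_K(dt)=0$, then Lemma~\ref{pordlem} plus the shift $\logord=\ord+1$).
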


\subsubsection{Bivariant forms}
For a finite extension $L/K$ we define the one-dimensional $L$-vector space $$\omega_{L/K}:=\Hom_K(\hatOmega_{K/k},\hatOmega_{L/k})=(\hatOmega_{K/k})'\otimes_K\hatOmega_{L/k},$$ where $V'$ denotes the dual of $V$. Its elements will be called {\em bivariant differential forms}. There is a canonical element $\tau_{L/K}\in\omega_{L/K}$ satisfying $\tau_{L/K}(\hatd_Kx)=\hatd_Lx$ for any $x\in K$. It vanishes if and only if $L/K$ is inseparable. With tensor notation, $\tau_{L/K}=(\hatd_Kx)'\otimes\hatd_Lx$ for any $x$ with $\hatd_Kx\neq 0$.

\begin{rem}
It is easy to see that $\omega_{L/K}$ is the dualizing sheaf of $L/K$, whence the notation. The element $\tau_{L/K}$ is the trace element from duality theory, i.e. the image of $1$ under the trace map $L\to\omega_{L/K}$.
\end{rem}

\subsubsection{Orders of bivariant forms}
Similarly to the case of differential forms, we have natural orders $\ord_{L/K}$ and $\ord_{L/K}^\rmlog$ on $\omega_{L/K}$ defined by the lattices $$\omega_{\Lcirc/\Kcirc}:=\Hom_{\Kcirc}(\hatOmega_{\Kcirc/k},\hatOmega_{\Lcirc/k}),\ \ \ \omega_{\Lcirc/\Kcirc}^\rmlog:=\Hom_{\Kcirc}(\hatOmega^\rmlog_{\Kcirc/k},\hatOmega^\rmlog_{\Lcirc/k}).$$
It follows from Remark~\ref{ordrem}(ii) that these orders are compatible with analogous orders on $k$-curves. Clearly, both lattices contain $\tau_{L/K}$. Also, we can now reinterpret the different as follows:

\begin{lem}\label{taudiflem}
If $L/K$ is a finite extension, then $\delta_{L/K}=\ord_{L/K}(\tau_{L/K})$ and $\delta^\rmlog_{L/K}=\logord_{L/K}(\tau_{L/K})$. In particular, $\delta_{L/K}$ (resp. $\delta^\rmlog_{L/K}$) vanishes if and only if $\tau_{L/K}$ generates $\omega_{\Lcirc/\Kcirc}$ (resp. $\omega_{\Lcirc/\Kcirc}^\rmlog$).
\end{lem}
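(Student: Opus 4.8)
The plan is to reduce everything to the already-established Lemmas~\ref{diflem0} and \ref{taudiflem}'s prerequisites by unwinding the definition of the order of a bivariant form in terms of the lattices $\omega_{\Lcirc/\Kcirc}$ and $\omega^\rmlog_{\Lcirc/\Kcirc}$. Recall that for a one-dimensional $L$-vector space $W$ equipped with an $\Lcirc$-lattice $W^\circ$, the order $\ord(w)$ of a nonzero $w\in W$ is the integer $n$ with $w\in t_L^n W^\circ\setminus t_L^{n+1}W^\circ$, where $t_L$ is a uniformizer of $L$. So I first fix a nonzero $\omega=dt\in\hatOmega_{K/k}$, let $\omega_L\in\hatOmega_{L/k}$ be its image, and compute both sides of the claimed equality against this choice of basis vector.

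First I would treat the non-logarithmic case. By definition $\tau_{L/K}(\omega)=\omega_L$, i.e. in the tensor notation $\tau_{L/K}=\omega'\otimes\omega_L$ where $\omega'\in(\hatOmega_{K/k})'$ is dual to $\omega$. Now $\omega'$ generates the dual lattice $(\hatOmega_{\Kcirc/k})' = \Hom_{\Kcirc}(\hatOmega_{\Kcirc/k},\Kcirc)$ because $\omega=dt$ generates $\hatOmega_{\Kcirc/k}=\Kcirc\, dt$. Hence $\ord_{L/K}(\tau_{L/K})$, computed in the lattice $\omega_{\Lcirc/\Kcirc}=\Hom_{\Kcirc}(\hatOmega_{\Kcirc/k},\hatOmega_{\Lcirc/k})\cong(\hatOmega_{\Kcirc/k})'\otimes_{\Kcirc}\hatOmega_{\Lcirc/k}$, is simply $\ord_L(\omega_L)$ — the order of $\omega_L$ with respect to the lattice $\hatOmega_{\Lcirc/k}$. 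But $\omega = dt$ satisfies $\ord_K(\omega)=0$ (it generates $\hatOmega_{\Kcirc/k}$), so Lemma~\ref{diflem0}(i) gives $\ord_L(\omega_L)=\delta_{L/K}+e_{L/K}\ord_K(\omega)=\delta_{L/K}$. This establishes $\delta_{L/K}=\ord_{L/K}(\tau_{L/K})$. The logarithmic case is formally identical, replacing $dt$ by $\frac{dt}{t}$, the lattices by their logarithmic counterparts, and invoking Lemma~\ref{diflem0}(ii): since $\frac{dt}{t}$ generates $\hatOmega^\rmlog_{\Kcirc/k}$, its logarithmic order is $0$, and $\logord_{L/K}(\tau_{L/K})=\logord_L\!\big((\tfrac{dt}{t})_L\big)=\delta^\rmlog_{L/K}$.

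For the final clause: $\tau_{L/K}$ always lies in both lattices (as noted just before the statement), so $\ord_{L/K}(\tau_{L/K})\ge 0$ with equality exactly when $\tau_{L/K}$ is a lattice generator, and likewise in the logarithmic case; combined with the first two identities this is immediate. I do not anticipate a genuine obstacle here — the lemma is, as the paper says, a matter of unwinding definitions — but the one point requiring care is the identification $\ord_{L/K}(\tau_{L/K})=\ord_L(\omega_L)$, i.e. checking that computing the order of $\omega'\otimes\omega_L$ in $(\hatOmega_{\Kcirc/k})'\otimes\hatOmega_{\Lcirc/k}$ really does reduce to the order of the second factor once the first factor is a lattice generator; this is where the choice $\omega=dt$ (rather than an arbitrary $\omega$) is used, and it is the only step where one must be slightly attentive rather than purely formal.
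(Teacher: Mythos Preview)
Your proof is correct and follows the same approach as the paper: write $\tau_{L/K}$ as a simple tensor and read off its order via Lemma~\ref{diflem0}. The only differences are that the paper uses an arbitrary $x$ with $\hatd_Kx\neq 0$ (obtaining $\ord_{L/K}(\tau_{L/K})=-e_{L/K}\ord_K(\hatd_Kx)+\ord_L(\hatd_Lx)$) rather than your convenient choice $x=t$, and it first disposes of the inseparable case ($\tau_{L/K}=0$, both differents infinite) --- a sentence you should add, since Lemma~\ref{diflem0} is stated only for separable $L/K$.
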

\begin{proof}
If $L/K$ is inseparable then $\tau_{L/K}=0$ and the differents are infinite. Assume that $L/K$ is separable. Choose $x\in K$ such that $\hatd_Kx\neq 0$. Then $$\ord_{L/K}(\tau_{L/K})=\ord_{L/K}\left((\hatd_Kx)'\otimes\hatd_Lx\right)=-e_{L/K}\ord_K(\hatd_Kx)+\ord_L(\hatd_Lx)$$ and similarly for the log orders. It remains to use Lemma~\ref{diflem0}.
\end{proof}


\subsection{Types of fields}
Similarly to \cite{CTT}, we will use some facts about one-dimensional analytic $k$-fields and their parameters, but this time more refined ones. Therefore it will be convenient to slightly revise some definitions in the mixed characteristic case.

\subsubsection{One-dimensional analytic fields}\label{onedimsec}
As in \cite[\S2.1.1]{CTT} or \cite[Section 6.2]{temst}, an analytic $k$-field $K$ is {\em one-dimensional} if it is finite over a subfield of the form $\wh{k(t)}$. Such fields are classified by the invariants $F_{K/k}=\trdeg_{\tilk}(\tilK)$ and $E_{K/k}=\dim_\bfQ(|K^\times|/|k^\times|\otimes_\bfZ\bfQ)$ into three types: type 2 has $F=1, E=0$, type 3 has $F=0,E=1$, and type 4 has $F=E=0$. If $X$ is a $k$-analytic curve and $x\in X$ is not Zariski closed then $\calH(x)$ is one-dimensional and its type is the type of $x$ in the classification of Berkovich.

\subsubsection{Fields of type 5}
Recall that in addition to the points of $X$, the topological space $X_G$ contains so-called points of type 5 whose completed residue fields are valued field of height 2, see \cite[Sections~3.1 and 3.4]{CTT}. This motivates the following definition. A valued $k$-field $K$ is called {\em one-dimensional $G$-analytic} if either it is a one-dimensional analytic field, or $K$ is of the following special form:

The valuation $|\ |$ on $K$ has values in $\bfR_{>0}^\times\times\bfZ\cup\{0\}$ ordered lexicographically, and denoting the first component by $|\ |_1\:K\to\bfR_{\ge 0}$, we have that $K_1=(K,|\ |_1)$ is a one-dimensional analytic $k$-field. In this case, we say that $K$ is of {\em type 5}. The second projection $\lam\:K^\times\to\bfZ$ is a multiplicative map but not a valuation. However, $-\lam$ induces a discrete valuation $\tilK_1^\times\onto\bfZ$ trivial on $\tilk$. It follows that $K_1$ is of type 2 and the valuation of $K$ is composed from the valuation of $K_1$ and a discrete valuation on $\tilK_1$ corresponding to a point of the smooth proper $\tilk$-model of $\tilK_1$.

\subsection{Parameters}

\subsubsection{$p$-seminorm}
For a real-valued $k$-field $K$ we define the {\em $p$-seminorm} by $|x|_p=\inf_{c\in K}|x-c^p|$. It heavily depends on $K$ and can drop in extensions, so we will use the full notation $|\ |_{K,p}$ when needed. This is a straightforward extension of the $p$-order to arbitrary real-valued fields.

\subsubsection{Best $p$-power approximations and $p$-orthogonality}
More generally, assume that $K$ is a valued field. If an element $x\in K$ is such that the infimum is achieved: $|x-b^p|=\min_{c\in K}|x-c^p|$ for some $b\in K$, then we set $|x|_p=|x-b^p|$ and say that $b^p$ is a {\em best $p$-power approximation} of $x$ (in $K$). In the particular case when $|x|_p=|x|$, we say that $x$ is {\em $p$-orthogonal} (in $K$). In general, one can define $|x|_p$ as a cut on the group of values $|K^\times|$ rather than an element of $|K^\times|$, but we will not need this. The only convention we will use for a general $x\in K$ is that $|x|_p\ge r\in|K^\times|$ means that $|x-c^p|\ge r$ for any $c\in K$.

\begin{lem}\label{paproxlem}
Assume $K$ is a valued field and $x\in K$ satisfies $|x|_p\ge|px|$. Then an element $c^p$ is a best $p$-power approximation if and only if $x-c^p$ is $p$-orthogonal.
\end{lem}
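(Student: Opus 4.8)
The plan is to analyze the function $c\mapsto|x-c^p|$ directly, using the hypothesis $|x|_p\ge|px|$ to control how $p$-th powers interact. The key observation is the ``ultrametric'' behaviour of the $p$-power map: for $a,b\in K$ we have $a^p-b^p=(a-b)^p+p\cdot(\text{stuff})$, where the ``stuff'' is a polynomial in $a,b$ with integer coefficients and is divisible by $(a-b)$. More precisely $a^p-b^p-(a-b)^p = p(a-b)\cdot Q(a,b)$ for some $Q\in\bfZ[a,b]$, so $|a^p-b^p-(a-b)^p|\le|p|\cdot|a-b|\cdot\max(|a|,|b|)^{p-1}$. This is the elementary algebraic identity that makes the whole argument run.

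First I would prove the ``if'' direction. Suppose $x-c^p$ is $p$-orthogonal, i.e. $|x-c^p|_p=|x-c^p|$. I must show $|x-c^p|=\min_{d}|x-d^p|$, equivalently that $|x-c^p|\le|x-d^p|$ for all $d\in K$. Write $x-d^p=(x-c^p)-(d^p-c^p)=(x-c^p)-(d-c)^p-p(d-c)Q(d,c)$. If $|x-d^p|<|x-c^p|$, then $|(d-c)^p+p(d-c)Q(d,c)|=|x-c^p|$ (both other terms in the sum having the smaller and equal magnitudes respectively — careful bookkeeping needed here). Now $p$-orthogonality of $x-c^p$ says that $|x-c^p-e^p|\ge|x-c^p|$ for every $e$; apply this with $e=d-c$ to get $|x-c^p-(d-c)^p|\ge|x-c^p|$. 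Combined with $|x-d^p|<|x-c^p|$ this forces $|p(d-c)Q(d,c)|\ge|x-c^p|$; but then, using $|Q(d,c)|\le\max(|d|,|c|)^{p-1}$ and the hypothesis $|x|_p\ge|px|$ (which bounds $|x-c^p|\ge$ something, while also controlling sizes of $c,d$ relative to $x$), one derives a contradiction. The role of $|x|_p\ge|px|$ is exactly to rule out the ``$p$ is small but multiplied by something large'' escape route: it guarantees the term $p(d-c)Q(d,c)$ cannot catch up to $x-c^p$ without $d$ being so large that $d^p$ already dwarfs $x$, making $|x-d^p|=|d^p|>|x-c^p|$ anyway.

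Next the ``only if'' direction: assume $c^p$ is a best $p$-power approximation, so $|x-c^p|=|x|_p\le|x-d^p|$ for all $d$, and show $x-c^p$ is $p$-orthogonal, i.e. $|x-c^p-e^p|\ge|x-c^p|$ for all $e\in K$. Given $e$, set $d$ so that $d^p$ approximates $c^p+e^p$: using the identity $(c+e)^p=c^p+e^p+p\cdot(\text{stuff})$, take $d=c+e$, so $d^p-c^p-e^p=p\cdot R(c,e)$ with $|R(c,e)|\le\max(|c|,|e|)^{p-1}\cdot|c|\cdot$ (up to constants — again an integer-coefficient polynomial bound). Then $x-c^p-e^p=(x-d^p)+p R(c,e)$, so $|x-c^p-e^p|\ge\min(|x-d^p|,\ |p R(c,e)|)$ unless there is cancellation, and in the cancellation case $|x-c^p-e^p|$ could a priori be smaller. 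Minimality gives $|x-d^p|\ge|x-c^p|$, so it remains to handle the $|pR(c,e)|$ term and any cancellation; here once more $|x|_p\ge|px|$ supplies the needed inequality, this time to show that whenever $|e^p|$ (hence $|e|$) is large enough that $pR(c,e)$ is non-negligible, $|x-c^p-e^p|$ is already $\ge|e^p|\ge|x-c^p|$.

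The main obstacle I anticipate is the careful case analysis around \emph{cancellation} in these ultrametric (in)equalities — the three-term expansions $x-d^p=(x-c^p)-(d-c)^p-p(d-c)Q$ and $x-c^p-e^p=(x-d^p)+pR$ give clean lower bounds only when the terms have distinct absolute values, and the borderline equal-norm cases are exactly where the hypothesis $|x|_p\ge|px|$ has to be invoked quantitatively. I would organize this by first disposing of the ``large $d$'' (or ``large $e$'') regime, where $|d^p|>|x|_p$ forces $|x-d^p|=|d^p|$ and everything is trivial, and then in the remaining bounded regime note that $|d|,|e|$ are bounded in terms of $|x|_p$, so $|pQ(d,c)|$ and $|pR(c,e)|$ are bounded by $|p|\cdot(\text{const depending on }|x|_p)$, and the inequality $|x|_p\ge|px|$ (which in the bounded regime reads roughly $|x|_p\ge|p|\cdot|x|\ge|p|\cdot|c^p|\ge$ the size of those correction terms) closes the argument. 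Everything reduces to the single algebraic identity for $a^p-b^p$ plus this one numerical inequality, so once the bookkeeping is set up the proof is short.
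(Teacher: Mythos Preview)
Your proposal is correct and follows essentially the same approach as the paper: both arguments hinge on the elementary identity $|(a+b)^p - a^p - b^p| \le |p|\max(|a|,|b|)^p$ together with the hypothesis $|x|_p\ge|px|$ to control the cross terms. The paper streamlines the bookkeeping you anticipate by first reducing to the case $|x-c^p|\le|x|$ (so that $|c|,|b|\le|x|^{1/p}$ automatically), after which both contrapositives go through in one line each without any separate ``large $d$'' regime analysis.
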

\begin{proof}
It suffices to consider the case when $|x-c^p|\le|x|$. If $c^p$ is not a best $p$-power approximation then $|x-b^p|<|x-c^p|$ for some $b\in K$. It is easy to see that $|(c-b)^p-c^p+b^p|\le|px|<|x-c^p|$. Hence $|x-c^p+(c-b)^p|\le|x-b^p|<|x-c^p|$, and we obtain that $x-c^p$ is not $p$-orthogonal. Conversely, if $x-c^p$ is not $p$-orthogonal, then $|x-c^p-a^p|<|x-c^p|$ for some $a\in K$. It follows easily that $|x-(a+c)^p|<|x-c^p|$, and hence $c^p$ is not a best $p$-power approximation.
\end{proof}

Now, we can prove our main result about existence of $p$-power approximations.

\begin{theor}\label{minth}
Let $K$ be a one-dimensional $G$-analytic $k$-field of type 2, 3 or 5 and let $x\in K$ be an element such that $|x|_p\ge|px|$. Then $x$ possesses a best $p$-power approximation $c^p\in K^p$.
\end{theor}
\begin{proof}
Assume that $K$ is of type 2 or 3 first. By \cite[Theorem~6.3.1(i)]{temst}, $K$ is unramified over a subfield of the form $\wh{k(y)}$, hence by \cite[Proposition~6.2.5]{temst} there exists an orthogonal Schauder basis $B$ of $K$ over $k$ of a very special form: $B=\{1\}\cup\{u^{p^n}|\ u\in U,n\in\bfN\}$, where $U\subset K$ is a subset such that any element of $\Span_k(U)$ is $p$-orthogonal. (The latter proposition is an important ingredient in proving that $K$ is stable.) Since $B=U\coprod B^p$ and $k=k^a$, we can represent $x$ as $\sum_{u\in U}a_uu+\sum_{b\in B}c^p_bb^p$. Set $t=\sum_{u\in U}a_uu$ and $c=\sum_{b\in B}c_bb$, and let us check that $c$ is as required.

Since $B$ is an orthogonal basis, we have that $|x|=\max(|t|,|c|^p)$ and $|x-t-c^p|\le|pc^p|\le|px|$. If $|t|=|px|$ then $|x-c^p|=|px|$, and hence $|x|_p=|px|$ and $c$ is as required. If $|t|>|px|$ then $|x-c^p-t|<|t|$ and since $t$ is $p$-orthogonal by the choice of $B$, $t+(x-c^p-t)=x-c^p$ is $p$-orthogonal too. By Lemma~\ref{paproxlem}, $c$ is as required.

Assume now that $K$ is of type 5, and so $|y|=(|y|_1,\lam(y))$, where $K_1=(K,|\ |_1)$ is the associated field of type 2. Let $C$ be the set of elements $x-c^p\in K$ such that $r=|x-c^p|_1$ is minimal. Then $C\neq\emptyset$ by the case of type 2, and by the discreteness of $\lam$ it suffices to show that the set $\lam(C)$ is bounded from below in $\bfZ$. If $r=|px|_1$, then $\lam(px)$ is such a bound.

It remains to show that the assumption that $r>|px|_1$ and $\lam(C)$ is unbounded from below leads to a contradiction. Choose $b\in k$ such that $|b|_1=r^{-1/p}$, then replacing $x$ by $x/b^p$ we can achieve that $|C|_1=1>|px|_1$. In this case, for any pair of elements $x-c^p$ and $x-b^p$ in $C$, the difference $b^p-c^p$ satisfies $|b^p-c^p-(b-c)^p|_1<1$. It follows that the image $\tilC\subset F:=\tilK_1$ of $C$ is a coset of $F^p$, that is, $\tilC=\tily+F^p$ for any $y\in C$. Let $\tillam$ be the discrete valuation $-\lam$ induces on $F$. Any $y\in C$ satisfies $|y|_1=1$ and hence $\lam(y)=-\tillam(\tily)$. We obtain that $\tillam(\tilC)$ is unbounded from above, hence the coset is trivial by \S\ref{pordsec}: $\tilC=F^p$. This immediately implies that $y\in C$ is not $p$-orthogonal in $K_1$, and we obtain the contradiction with Lemma~\ref{paproxlem}.
\end{proof}

\subsubsection{Pure and mixed parameters}
Let $K$ be a one-dimensional $G$-analytic $k$-field. By a {\em parameter} of $K$ we mean any element $t\in K$ such that $0\neq|t|_p\ge|pt|$. We say that the parameter is {\em pure} if $|t|_p>|pt|$ and {\em mixed} if $|t|_p=|pt|$. The second case can only occur in the mixed characteristic case.

\begin{rem}
A similar definition in \cite{CTT} only requires that $K/\wh{k(t)}$ is finite and separable. This condition is equivalent to $t\notin k$ if $\cha(k)=0$ and to $t\notin K^p$ if $\cha(k)=p$. Thus, the new definition can be viewed as a refinement of the old one, which is only essential in the mixed characteristic case.
\end{rem}

\subsubsection{Tame and monomial parameters}
Given a parameter $t$ we use the same definitions as in \cite{CTT}: the {\em radius} of $t$ is $r_t=\inf_{c\in k}|t-c|$, we say that $t\in K$ is {\em monomial} if $|t|=r_t$, and $t$ is {\em tame} if $K/\wh{k(t)}$ is tame.

\begin{lem}\label{tameparamlem}
Assume that $K$ is a one-dimensional $G$-analytic $k$-field and $x\in K$ is an element.

(i) If $x$ is a tame parameter then $|x|_p=r_x$.

(ii) If $K$ is of type 2, 3 or 5, then $x$ is a tame monomial parameter if and only if it is $p$-orthogonal.
\end{lem}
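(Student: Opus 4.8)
The plan is to treat the two statements separately, deriving (ii) from (i) together with the structure theorems for one-dimensional $G$-analytic fields and Lemma~\ref{paproxlem}. For part (i), suppose $x$ is a tame parameter; we want $|x|_p=r_x$. Since $x$ is tame, $K/\wh{k(x)}$ is tame, and the behaviour of $|\ |_p$ in a tame extension does not change things in a relevant way, so after possibly enlarging we may reduce to understanding $|x|_{\wh{k(x)},p}$. By definition $r_x=\inf_{c\in k}|x-c|$, and replacing $x$ by $x-c_0$ for a suitable $c_0\in k$ (the valuation is algebraically closed residue field, so we can extract a best constant approximation) we may assume $|x|=r_x$, i.e. $x$ is monomial. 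Then $\wh{k(x)}$ is a field of type 2 or 3 with $x$ a monomial uniformizer/coordinate, and one checks directly that $x$ is $p$-orthogonal in $\wh{k(x)}$: indeed for any $c$, if $|c|^p<|x|$ then $|x-c^p|=|x|$, and if $|c|^p\ge|x|$ then since $x$ is monomial and $r_x\notin|k^\times|$ (the interesting case) or $r_x\in|k^\times|$ (where one uses that $x-c^p$ still has valuation $r_x$ after absorbing the leading term of $c^p$ into a shift), we still get $|x-c^p|\ge r_x$. Hence $|x|_p\ge r_x$; the reverse inequality $|x|_p\le|x|=r_x$ is automatic. The subtle point is controlling the case $|c|^p\ge r_x$ when $r_x\in|k^\times|$, but there one uses that a monomial parameter generates a "$p$-orthogonal" direction over $k$ because $k=k^a$ forces all $p$-th powers of constants to already sit in $k$, so subtracting $c^p$ only shifts by a constant and cannot decrease the valuation below $r_x$.

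For part (ii), one direction is: if $x$ is a tame monomial parameter then $|x|=r_x$ and, by (i), $|x|_p=r_x=|x|$, so $x$ is $p$-orthogonal by definition. The converse is the substantive direction: assume $K$ is of type 2, 3 or 5 and $x$ is $p$-orthogonal, i.e. $|x|_p=|x|$; we must show $x$ is a tame monomial parameter. First, $p$-orthogonality forces $|x|_p\ge|px|$ trivially (since $|x|\ge|px|$ in mixed characteristic, or $p=0$ in equal characteristic), so $x$ is a parameter. To see $x$ is monomial, suppose not: then $r_x<|x|$, so there is $c\in k$ with $|x-c|<|x|$; but then I claim one can improve $x$ by a $p$-th power, contradicting $p$-orthogonality. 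The idea is that near a constant $c$, the element $x-c$ is "small", and using that $k=k^a$ we can write $c=c_0^p$ and compare $x-c_0^p$ with $x$; more precisely, in a field of type 2, 3 or 5 a non-monomial element has a constant leading term and subtracting the $p$-th root of that constant (which exists in $k$) strictly decreases the valuation, so $|x|_p<|x|$, contradiction. Finally, for tameness: once $x$ is a monomial parameter, $\wh{k(x)}\subseteq K$ and one invokes the structure theory — for type 3, $K/\wh{k(x)}$ is automatically of the controlled form; for type 2 and 5, $p$-orthogonality of $x$ is exactly what prevents $K/\wh{k(x)}$ from being wildly ramified, since a wild extension would produce, via the ramification, an element of $K$ approximating $x$ by a $p$-th power better than $|x|$ itself (this is essentially the content of Lemma~\ref{paproxlem} combined with the fact that $\delta_{L/K}>0$ in a wild extension forces $\tau_{L/K}$ not to generate the relevant lattice, translating back into a better $p$-power approximation of a chosen parameter). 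Thus $x$ tame.

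The main obstacle I expect is the converse in (ii), specifically the step that extracts tameness from $p$-orthogonality: showing that if $x$ were a wild parameter then its $p$-seminorm would be strictly smaller than $|x|$. This requires carefully relating the $p$-seminorm in $K$ to the ramification of $K/\wh{k(x)}$ — the natural tool is Theorem~\ref{minth} (existence of a best $p$-power approximation under $|x|_p\ge|px|$) together with the different computations of \S\ref{diffsec}, but one has to be careful that the approximation lives in $K$ itself and not merely in some extension, since $|\ |_p$ can drop in extensions. For type 5 one additionally has to descend everything to the associated type-2 field $K_1$ and check that the monomiality and tameness statements for $x$ in $K$ are equivalent to the corresponding statements in $K_1$, which should follow because the second valuation component $\lambda$ is insensitive to the relevant operations (subtracting $p$-th powers of elements of $k$, which all have $\lambda=0$).
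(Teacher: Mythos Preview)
Your proposal has a genuine gap in part (i): the step ``replacing $x$ by $x-c_0$ for a suitable $c_0\in k$ we may assume $|x|=r_x$'' presupposes that the infimum $r_x=\inf_{c\in k}|x-c|$ is attained. This fails precisely for fields of type~4, where $K/k$ is an immediate extension and no best constant approximation of $x$ exists. Your subsequent claim that $\wh{k(x)}$ is of type~2 or~3 is then circular: it holds only after you have made $x$ monomial, which you cannot do in type~4. The paper treats type~4 by a completely different argument, arguing the contrapositive via results from \cite{temst}: if $|x|_p<r_x$ one finds $y\in K$ with $|x-y^p|<r_x$, and then \cite[Lemmas~6.2.8 and 6.3.3]{temst} force $K/\wh{k(x)}$ to be a nontrivial immediate (hence wild) extension.

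A second issue is the reduction ``the behaviour of $|\ |_p$ in a tame extension does not change things in a relevant way, so we may reduce to $|x|_{\wh{k(x)},p}$.'' The inequality goes the wrong way: since $\wh{k(x)}\subseteq K$, one has $|x|_{K,p}\le|x|_{\wh{k(x)},p}$, so $p$-orthogonality in the smaller field does not give it in $K$. You would need the reverse inequality, which is not obvious and is not what the paper does. Instead the paper works directly in $K$: for types~2, 3, 5 it uses stability (defectlessness) of $K/\wh{k(x)}$ together with the concrete criterion that tameness is equivalent to $\tilx\notin\tilK^p$ (type~2) or $|x|\notin|K^\times|^p$ (types~3,~5), each of which is then matched to $p$-orthogonality. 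Your monomiality argument in the converse of (ii) (using $k=k^a$ to write any $c\in k$ as $c_0^p$) is correct, but the tameness step remains a sketch; the paper's type-by-type argument is what actually closes it.
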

\begin{proof}
(ii) Fields of type 2, 3 and 5 are stable, hence the extension $K/\wh{k(x)}$ is defectless. If $K$ is of type 2, then multiplying $x$ by an element of $k$ we can assume that $|x|=1$. Then $x$ is a tame monomial parameter if and only if $\tilK$ is separable over $\wt{k(x)}=\tilk(\tilx)$, which happens if and only if $\tilx\notin\tilK^p$. It is easy to see that the latter happens if and only if $x$ is $p$-orthogonal.

Assume that $K=(K,|\ |)$ is of types 3 or 5, and hence $\tilK=\tilk$. If $x$ is monomial then $|k(x)^\times|$ is generated over $|k^\times|$ by $|x|$, and it follows that $x$ is a tame monomial parameter if and only if $|x|\notin|K^\times|^p$. On the other hand, since $\tilK=\tilk$ is perfect, it is easy to see that $x$ is $p$-orthogonal if and only if $|x|\notin|K^\times|^p$.

(i) For types 2, 3 and 5, subtracting from $x$ an element $c\in k$ we can make it monomial, and then the assertion reduces to the direct implication in (ii). So let $K$ be of type 4. Assume that $|x|_p\neq r_x$. Subtracting from $x$ an element $c\in k$ we can assume that $|px|<r_x$. Since $k=k^a$ we have that $|x|_p\le r_x$, and hence $|x-y^p|<r_x$ for some $y\in K$. Then $[K:\wh{k(y^p)}]>1$ by \cite[Lemma~6.2.8]{temst} and $[K:\wh{k(y^p)}]=[K\:\wh{k(x)}]$ by \cite[Lemma~6.3.3]{temst}. Thus $K/\wh{k(x)}$ is non-trivial, and being immediate it is necessarily wildly ramified. This proves that $x$ is not a tame parameter when $|x|_p\neq r_x$.
\end{proof}

\subsubsection{Taming a parameter}
Here is the main result on arbitrary parameters that connects them to tame ones.

\begin{theor}\label{tameth}
Assume that $K$ is a one-dimensional $G$-analytic $k$-field and $x\in K$ is a parameter. Then there exists a decomposition $x=c^p+t$, where $t\in K$ is a tame parameter and $c\in K$ is an element such that $|c^p|\le|x|$. If $K$ is not of type 4, then one can achieve in addition that $t$ is monomial.
\end{theor}
\begin{proof}
For types 2, 3 and 5, by Theorem~\ref{minth} we can find a best $p$-power approximation $c^p$ of $x$. Then $t=x-c^p$ is $p$-orthogonal by Lemma~\ref{paproxlem}, and hence it is a tame monomial parameter by Lemma~\ref{tameparamlem}. For the type 4 case this follows from \cite[Proposition~6.2.4]{temst} applied to the special coset $$x+S_{0,|x|_p}(K):=x+\{c^p+d|\ c,d\in K, |pc^p|<|x|_p,|d|<|x|_p\}.$$
\end{proof}

\subsubsection{Tame term}
Given a $k$-field $K$ of type 2, 3 or 5 and a parameter $x\in K$, consider the decomposition $x=c^p+t$ as in Theorem~\ref{tameth}. We call $t$ a {\em tame term} of $x$. Note that $|t|=|x|_p$ and $t$ is unique up to adding elements $d+b^p\in K$ such that $|d|<|b^p|=|t|$. In addition, for any element $x\in K$, which is not a parameter, we define its tame term to be equal to 0.

\subsubsection{Tame reduction}
When $K$ is of type 2 we will also want to consider an informative reduction data associated to the tame term $t$ of $x$. It will be an element $\tilx_\tame$ that we will call {\em the tame reduction of $x$}. If $x$ is not a parameter we set $\tilx_\tame=0$. Otherwise, choose any $a\in k$ with $|a|=|x|_p=|t|$, consider the element $t'=t/a$, and set $\tilx_\tame=\tilt'$. Up to adding elements $b^p$ with $|b|\le 1$ and multiplication by elements $d\in k$ with $|d|=1$, the element $t/a$ depends only on $x$, hence the tame reduction is well defined up to adding elements from $\tilK^p$ and multiplying by elements of $\tilk^\times$.

In the special case when $|t|_p=|p|$ we have a canonical choice $a=p$. The corresponding tame reduction will be called {\em canonical tame reduction}. It is well defined up to adding elements from $\tilK^p$. In fact, we will only need this in the special case when $x$ is a mixed parameter and $|x|=1$.

\section{Differentials and reduction}\label{sec3}

\subsection{The case of analytic fields}\label{anfieldssec}
In Section \ref{anfieldssec} $K$ is a one-dimensional analytic $k$-field. The case of type 5 could be dealt with similarly, but would require more work since some foundations were not developed in \cite{Temkintopforms} and \cite{CTT}.

\subsubsection{Completed modules of differentials}
Following \cite[\S4.1.1]{Temkintopforms}, we provide the vector space $\Omega_{K/k}$ with a natural K\"ahler seminorm $\|\ \|_{K/k,\Omega}$ and denote the completion $\hatOmega_{K/k}$. For shortness, the differential $\hatd_{K/k}\:K\to\hatOmega_{K/k}$ will be usually denoted simply by $d$ and the seminorm will be denoted by $\|\ ||$. Note that $\|dc\|\le r_c$ for any $c\in K$. By \cite[Theorem~2.3.2]{CTT}, $\hatOmega_{K/k}$ is one-dimensional, and if $t\in K$ is a tame parameter then $\|dt\|=r_t$. In particular, $dt$ is a basis of $\hatOmega_{K/k}$.

\subsubsection{Reduction}\label{redsec}
We will be mainly interested in the case when $K$ is of type 2. Then there exists a tame monomial parameter $t$ with $|t|=1$ and hence $\|dt\|=1$. In particular, $dt$ generates the unit ball $\hatOmega^\di_{K/k}$ of $\hatOmega_{K/k}$ and hence the natural map $h\:\hatOmega_{\Kcirc/\kcirc}\to\hatOmega^\di_{K/k}$ is an isomorphism. Using the unit ball $\hatOmega_{\Kcirc/\kcirc}$, we define the {\em reduction} of $\hatOmega_{K/k}$ to be the $\tilK$-vector space $\hatOmega^\di_{K/k}\otimes_{\kcirc}\tilk$. Since $h$ is an isomorphism this space is canonically isomorphic to $\Omega_{\tilK/\tilk}$ and we obtain the reduction map $\hatOmega^\di_{K/k}\to\Omega_{\tilK/\tilk}$ that will be denoted $\omega\mapsto\tilomega$. For any $x\in\Kcirc$, the reduction of $\hatd_{K/k}(x)$ is $d_{\tilK/\tilk}(\tilx)$.

\subsubsection{$p$-seminorm versus K\"ahler seminorm}
The K\"ahler seminorm can be also used to compute the $p$-seminorm in the most important cases. This result will not be used, but we add it for the sake of completeness.

\begin{theor}\label{normsth}
Assume that $K$ is a one-dimensional analytic $k$-field and $x\in K$ is a parameter. Then $|x|_p=\|\hatd_{K/k}x\|$.
\end{theor}
\begin{proof}
Find a presentation $x=c^p+t$ as in Theorem~\ref{tameth}, then $dx=dt+pc^{p-1}dc$ and $\|dt\|=r_t$. By Lemma~\ref{tameparamlem} we have that $r_t=|t|_p=|x|_p$. In addition, $|px|\ge|pc^p|\ge\|pc^{p-1}dc\|$ because $\|dc\|\le|c|$. If $|x|_p>|px|$ then $\|dt\|=|x|_p>\|pc^{p-1}dc\|$ and hence $\|dx\|=\|dt\|=|x|_p$. Assume now that $|x|_p=|px|$, and let us study different types separately.

If $K$ is of type 4 then $\|dc\|\le r_c<|c|$ hence we still have that $\|dt\|>\|pc^{p-1}dc\|$ and the argument works. If $K$ is of types 2 or 3 then by Theorem~\ref{tameth} we can assume that $t$ is tame and monomial, and hence $p$-orthogonal by Lemma~\ref{tameparamlem}. If $K$ is of type 3 then $|t|\notin|K^\times|^p$. However, $|p|\in|K^\times|^p$, and $|x|\in|K^\times|^p$ because $|x-c^p|<|x|$. This contradicts that $|t|=|x|_p=|px|$. Finally, assume that $K$ is of type 2. Then $|x|_p=|t|\in|k^\times|$, hence multiplying $x$ by an appropriate element of $k$ we can achieve that $|x|_p=|t|=1$. Since $t$ is $p$-orthogonal, $\tilt\notin|\tilK^p|$ and hence $d\tilt\neq 0$. Setting $b=cp^{-1/p}$ we have that $dx=dt+b^{p-1}db$, and hence the reduction is $d\tilx=d\tilt+\tilb^{p-1}d\tilb$. Zero is the only exact differential form of the form $\tilb^{p-1}d\tilb$, hence $d\tilx\neq 0$ and we obtain that $\|dx\|=1=|x|_p$, as required.
\end{proof}

\subsubsection{Bivariant differentials}\label{bivardifsec}
If $L/K$ is a finite extension we consider the vector space of bivariant differentials $\omega_{L/K}:=(\hatOmega_K)'\otimes_K\hatOmega_L$ and provide it with the norm induced from the K\"ahler norms on $\hatOmega_K$ and $\hatOmega_L$. The unit ball $\omega^\di_{L/K}$ of this norm coincides with $\omega_{\Lcirc/\Kcirc}:=(\hatOmega_{\Kcirc/\kcirc})'\otimes_{\Kcirc}\hatOmega_{\Lcirc/\kcirc}$. Again, when $K$ is of type 2 it is easy to compute the reduction:
$$\tilomega_{L/K}:=\omega^\di_{L/K}\otimes_{\kcirc}\tilk=(\Omega_{\tilK/\tilk})'\otimes_\tilK\Omega_{\tilL/\tilk}=\omega_{\tilL/\tilK}.$$

\subsubsection{The canonical bivariant form}
Finally, we have a canonical element $\tau_{L/K}$ corresponding to the map $\hatOmega_K\otimes_KL\to\hatOmega_L$.

\begin{theor}\label{taufields}
Let $L/K$ be a finite extension of one-dimensional analytic $k$-fields. Then,

(i) $\|\tau_{L/K}\|=\delta_{L/K}$.

(ii) If $K$ is of type 2, $L/K$ is separable, and $c\in k$ satisfies $|c^{-1}|=\delta_{L/K}$, then $\wt{c\tau_{L/K}}=(\tilphi)'\otimes\tilpsi\neq 0$, where $\phi\in\hatOmega_{K/k}$ satisfies $\|\phi\|=1$, and $\psi\in\hatOmega_L$ is the image of $c\phi$.
\end{theor}
\begin{proof}
Both sides in (i) vanish if the extension is inseparable, so assume that $L/K$ is separable. For any non-zero $\phi\in\hatOmega_{K/k}$ with image $\psi_1\in\hatOmega_{L/k}$, the ratio $\|\psi_1\|/\|\phi\|$ equals the same number $r$ because the spaces are one-dimensional. Clearly, $\|\tau_{L/K}\|=r$. On the other hand, $r=\delta_{L/K}$ by \cite[Theorem~2.4.4]{CTT}, thus proving (i). In addition, in (ii) we obtain that $c\tau_{L/K}=\phi'\otimes\psi$ and $\|\phi\|=\|\psi\|=1$, which implies the assertion.
\end{proof}

\subsection{K\"ahler seminorms on curves}\label{Kahsec}
Following \cite[\S6.1.1]{Temkintopforms} we provide $\Omega_X$ with the K\"ahler seminorm $\|\ \|=\|\ \|_{X/k,\Omega}$ and denote its unit ball by $\Omega^\di_X$, see also \cite[Remark~6.1.6]{Temkintopforms}.

\subsubsection{Pm functions}
Recall that a $k$-analytic curve $X$ possesses a canonical metric structure, e.g. see \cite[\S3.6.1]{CTT}. As in \cite[\S3.6.3]{CTT}, we say that a real-valued function on a subset $S\subseteq X$ is {\em piecewise $|k^\times|$-monomial} or {\em $|k^\times|$-pm} if its restrictions onto intervals in $S$ are so. For example, for any function $f$ its norm $|f|$ is a $|k^\times|$-pm function.

\subsubsection{$\calHom$-seminorms}
Seminorms on sheaves of $\calO_X$-modules and basic operations on them were defined in \cite[Section 3]{Temkintopforms}. We will also need a notion of $\Hom$-seminorm, but one should be careful with boundedness. If $M,N$ are seminormed modules over a seminormed ring $A$ then $L=\Hom_A(M,N)$ is provided with a {\em $\Hom$-quasi-norm} $|\phi|_L=\sup(|\phi(m)|_N/|m|_M)$ where $m$ runs over elements $m\in M$ with $|m|\neq 0$. The above supremum is finite if and only if $\phi$ is bounded, so $|\ |_L$ defines a {\em $\Hom$-seminorm} on the module $\Hom^b_A(M,N)$ of bounded homomorphisms.

Assume now that $\calF$ and $\calG$ are seminormed $\calO_{X_G}$-modules. We define the sheaf of bounded homomorphisms $\calHom^b_{\calO_{X_G}}(\calF,\calG)$ by sheafifying the presheaf that maps $U$ to $\Hom^b_{\calO_{X_G}(U)}(\calF(U),\calG(U))$. The sheafification is not needed if $X$ is compact. This sheaf is provided with the {\em $\calHom$-seminorm} obtained by sheafifying the $\Hom$-seminorms on the modules of sections.

\subsubsection{Pm seminorms}
Let $\calF$ be a sheaf on $X_G$ and $\|\ \|$ a seminorm on $\calF$. Any section $s\in\calF(U)$ determines the function $\|s\|\:U\to\bfR_{\ge 0}$ sending $x\in U$ to $\|s\|_x$. In general, these functions are only upper semicontinuous, but in many cases they also satisfy additional properties. We say that the seminorm $\|\ \|$ is {\em $|k^\times|$-pm} if $\|s\|$ is $|k^\times|$-pm for any section $s\neq 0$.

\begin{rem}
(i) It is an interesting problem to develop a theory of $|k^\times|$-pm seminorms for arbitrary $k$-analytic spaces using a notion of pm subspaces generalizing pl subspaces of \cite[Section~7.2]{Temkintopforms}. We do not pursue this here, and will only prove a few very particular results we will need.

(ii) K\"ahler seminorms and seminorms obtained from them should certainly be $|k^\times|$-pm, in particular, see \cite[Theorem~8.1.6]{Temkintopforms}. We will see that for curves this is indeed so, and the arguments are much simpler than in \cite[Theorem~8.1.6]{Temkintopforms}.
\end{rem}

\begin{lem}\label{sempmlem}
Let $X$ be a nice $k$-analytic curve, $\calF$ an invertible $\calO_{X_G}$-module, and $\|\ \|$ a seminorm on $\calF$. Then $\|\ \|$ is $|k^\times|$-pm if and only if for any interval $I\subset X$ with the induced $|k^\times|$-pm structure, the restriction of the unit ball $\calF^\di|_I$ is an invertible $\calOcirc_{X_G}|_I$-module.
\end{lem}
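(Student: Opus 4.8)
The plan is to prove both implications by working locally along an interval $I\subset X$, where an invertible $\calO_{X_G}$-module is automatically trivializable, so that the statement becomes a comparison between the ``pm-ness'' of a function and the module-theoretic structure of a unit ball. First I would fix an interval $I$ and a generator $s$ of $\calF$ over a neighborhood of $I$ in $X_G$; then every section is $fs$ for a section $f$ of $\calO_{X_G}$, and $\|fs\|=|f|\cdot\|s\|$, so understanding $\|\ \|$ on $I$ reduces to understanding the single function $g:=\|s\|$ on $I$. The key elementary observation is that, since $|f|$ is always $|k^\times|$-pm on $I$ (as recalled in \S\ref{Kahsec}), the function $\|fs\|=|f|g$ is $|k^\times|$-pm for all $f$ \emph{if and only if} $g$ is $|k^\times|$-pm: one direction is the product of two pm functions being pm, and for the other one takes $f=1$. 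Thus the seminorm $\|\ \|$ is $|k^\times|$-pm if and only if $\|s\|$ is $|k^\times|$-pm on every interval, for any (equivalently, one) local generator $s$.

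Next I would translate the unit-ball condition into the same statement about $g$. The restriction $\calF^\di|_I$ is the $\calOcirc_{X_G}|_I$-submodule of $\calF|_I$ consisting of sections $fs$ with $|f|g\le 1$, i.e.\ $|f|\le g^{-1}$ pointwise. This is an invertible $\calOcirc_{X_G}|_I$-module precisely when it is locally generated by one element $f_0s$, which forces $g^{-1}=|f_0|$ locally on $I$ — so $g$ itself is locally of the form $|f_0|^{-1}$ with $f_0$ an invertible section of $\calO_{X_G}$, hence $|k^\times|$-pm (here I use that invertible functions on intervals of a nice curve have $|k^\times|$-pm, in fact locally $|k^\times|$-monomial, absolute value, together with the fact that $|k^\times|$ is divisible since $k=k^a$, so $g^{-1}$ pm forces $g$ pm). Conversely, if $g$ is $|k^\times|$-pm on $I$, then locally on $I$ it equals $|c|\cdot|t|^{m}$ for some $c\in k^\times$ and a monomial coordinate $t$ (using that the slopes of a $|k^\times|$-pm function and the values in $|k^\times|$ are compatible with the $\bfQ$-structure), and then $c^{-1}t^{-m}s$ is a local generator of $\calF^\di|_I$, exhibiting it as invertible. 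Assembling the two translations gives the equivalence claimed in the lemma.

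I expect the main technical point — and the step deserving the most care — to be the direction ``$g$ $|k^\times|$-pm $\Rightarrow$ $\calF^\di|_I$ invertible,'' because one must produce an actual invertible section $f_0$ of $\calO_{X_G}$ whose absolute value realizes the prescribed $|k^\times|$-monomial behavior on each subinterval and glue these across breakpoints. Concretely, on a segment of $I$ that is a subinterval of the skeleton of an annulus or a disc one has a monomial coordinate $t$, and $|t|$ takes exactly the monomial shape needed; the subtlety is only at the finitely many break points, where one may need to rescale by elements of $k^\times$, and at endpoints of $I$ lying at type-$2$ points, where one checks that the unit ball is still free of rank one over the valuation ring. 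The convention that $X_G$ carries $\calOcirc_{X_G}$ makes this a purely local statement about one-dimensional $G$-analytic fields, where freeness of rank-one unit balls is standard. The other direction and the reduction to a single function $g$ are essentially formal manipulations with $\Hom$/$\calHom$-seminorms and the already-recalled fact that $|f|$ is $|k^\times|$-pm.
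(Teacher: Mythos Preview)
Your proposal is correct and follows essentially the same approach as the paper: reduce locally to a single generator $s$, so that the question becomes whether $g=\|s\|$ is $|k^\times|$-pm, and then observe that this is equivalent to the existence of local sections $f_i$ with $|f_i|=g$, which in turn is equivalent to $f_i^{-1}s$ locally generating $\calF^\di|_I$. The paper proves the forward direction exactly this way and omits the converse; your discussion of breakpoints, type-2 endpoints, and divisibility of $|k^\times|$ is more cautious than necessary (the converse is symmetric and needs no such care), but nothing is wrong.
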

\begin{proof}
Working locally on $X$ we can assume that $s\in\calF$ is a generator. If $\|\ \|$ is $|k^\times|$-pm then the restriction of $\|s\|$ onto $I$ is $|k^\times|$-pm, hence there exists an admissible covering $X=\cup X_i$ with functions $f_i\in\calO_{X_G}(X_i)$ such that $\|s\|=|f_i|$ on $I_i=X_i\cap I$. Therefore $\|f_i^{-1}s\|=1$ on $I_i$ and it follows that $f_i^{-1}s$ is an $\calOcirc_{X_G}|_{I_i}$-generator of $\calF^\di|_{I_i}$. Thus, $\calF^\di|_I$ is invertible. The inverse implication is proved similarly, and we omit the details.
\end{proof}

The lemma implies that the K\"ahler seminorm on $\Omega_X$ is $|k^\times|$-pm because on any annulus $A$ with skeleton $l$ and coordinate $t$, the sheaf $\Omega^\di_A|_l$ is generated by $\frac{dt}{t}$, e.g. see \cite[Theorem~4.3.3(ii)]{CTT}.

\begin{cor}\label{sempmcor}
Basic operations on seminorms on invertible sheaves, including tensor products, duals, and pullbacks, preserve the property of being a $|k^\times|$-pm seminorm and respect unit balls. For example, given $|k^\times|$-pm seminorms on $\calF$ and $\calG$, the $\calHom$-seminorm on $\calH=\calHom^b_{\calO_{X_G}}(\calF,\calG)$ is $|k^\times|$-pm, and $\calH^\di=\calHom_{\calOcirc_{X_G}}(\calF^\di,\calG^\di)$.
\end{cor}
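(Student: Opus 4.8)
The plan is to reduce every assertion to Lemma~\ref{sempmlem}, which characterizes the $|k^\times|$-pm property of a seminorm on an invertible sheaf by the requirement that the restriction of its unit ball to any interval $I\subset X$ be an invertible $\calOcirc_{X_G}|_I$-module. So it suffices, for each of the operations (tensor product, dual, pullback, $\calHom$), to check two things on an arbitrary interval $I$: first, that the operation commutes with passage to unit balls (``respects unit balls''), and second, that the resulting unit ball is again locally free of rank one over $\calOcirc_{X_G}|_I$.

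First I would treat tensor products and duals. Working locally on $I$ we may pick $\calOcirc_{X_G}|_I$-generators $s$ of $\calF^\di|_I$ and $t$ of $\calG^\di|_I$, which exist by Lemma~\ref{sempmlem} applied to the given $|k^\times|$-pm seminorms; by the definition of the tensor-product seminorm we have $\|s\otimes t\|=\|s\|\cdot\|t\|$, so $s\otimes t$ generates $(\calF\otimes\calG)^\di|_I$, which is therefore invertible, and this simultaneously shows that the unit ball of the tensor seminorm is $\calF^\di\otimes\calG^\di$. For the dual, the element $s'\in\calF'$ dual to $s$ satisfies $\|s'\|=\|s\|^{-1}$, so $s'$ generates $(\calF')^\di|_I$; again invertibility and compatibility with unit balls follow together. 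Pullback along $f\:Y\to X$ is similar: if $s$ generates $\calF^\di|_I$ locally, then $f^*s$ generates $(f^*\calF)^\di|_{f^{-1}(I)}$ locally, using that pullback of the K\"ahler-type seminorms is defined so that $\|f^*s\|_y=\|s\|_{f(y)}$, and one checks this on intervals in $Y$, which map piecewise monomially to intervals in $X$.

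The main case, and the one requiring the most care, is the $\calHom$-seminorm on $\calH=\calHom^b_{\calO_{X_G}}(\calF,\calG)$. Here the subtlety, flagged already in the definition of the $\Hom$-seminorm, is boundedness: one must know that the locally defined homomorphism $s\mapsto t$ (sending a local generator of $\calF^\di$ to a local generator of $\calG^\di$) is bounded, which it is, since on $I$ it is an isometry up to the ratio $\|t\|/\|s\|$ of two $|k^\times|$-pm functions. Concretely, working on $I$ with generators $s$ of $\calF^\di|_I$ and $t$ of $\calG^\di|_I$, the homomorphism $\phi_0$ with $\phi_0(s)=t$ satisfies $\|\phi_0\|_x=\|t\|_x/\|s\|_x$ for all $x\in I$, a $|k^\times|$-pm function, and every local section of $\calH$ over $I$ is uniquely $g\phi_0$ for a local function $g$; hence $\phi_0$ generates $\calH^\di|_I$ exactly where $\|\phi_0\|=1$, after rescaling $\phi_0$ by a local function with the appropriate absolute value, exactly as in the proof of Lemma~\ref{sempmlem}. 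This shows $\calH^\di|_I$ is invertible, so by Lemma~\ref{sempmlem} the $\calHom$-seminorm is $|k^\times|$-pm, and the identification $\calH^\di=\calHom_{\calOcirc_{X_G}}(\calF^\di,\calG^\di)$ drops out of the computation since $\phi\in\calH^\di|_I$ iff $\phi(s)\in\calG^\di$ for the (any) generator $s$ of $\calF^\di|_I$, iff $\phi$ carries $\calF^\di|_I$ into $\calG^\di|_I$. I expect the only genuine obstacle to be bookkeeping around boundedness in the non-compact case — but since we work on a nice (hence compact) curve $X$, the sheafification in the definition of $\calHom^b$ is unnecessary and this issue evaporates.
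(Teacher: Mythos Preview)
Your proposal is correct and follows exactly the route the paper intends: the corollary is stated without proof, as an immediate consequence of Lemma~\ref{sempmlem}, and your argument is precisely the natural unwinding of that lemma for each operation. The paper offers no additional details to compare against.
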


\subsubsection{The dualizing sheaf}
We do not develop any duality theory. In an ad hoc manner, we just call $$\omega_{f}:=\calHom_{\calO_{Y_G}}(f^*\Omega_{X_G},\Omega_{Y_G})=(f^*\Omega_{X_G})'\otimes_{\calO_{Y_G}}\Omega_{Y_G}$$ the dualizing sheaf of a finite morphism $f\:Y\to X$ of nice $k$-analytic curves. By $\tau_f\in\Gamma(\omega_{f})$ we denote the global bivariant form corresponding to the natural map $\psi_{Y_G/X_G/k}\:f^*\Omega_{X_G}\to\Omega_{Y_G}$. The K\"ahler seminorms on $\Omega_{X_G}$ and $\Omega_{Y_G}$ induce a $\calHom$-quasi-norm $\|\ \|_\omega$ on $\omega_{f}$.

\begin{lem}\label{boundlem}
If $f$ is generically \'etale then $\omega_{f}=\calHom^b_{\calO_{Y_G}}(f^*\Omega_{X_G},\Omega_{Y_G})$. In particular, $\|\ \|_\omega$ is a seminorm on $\omega_f$.
\end{lem}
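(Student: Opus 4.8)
The claim is that when $f$ is generically étale, every section of $\omega_f=\calHom_{\calO_{Y_G}}(f^*\Omega_{X_G},\Omega_{Y_G})$ is automatically bounded, so $\omega_f=\calHom^b_{\calO_{Y_G}}(f^*\Omega_{X_G},\Omega_{Y_G})$. Since both sides are sheaves on $Y_G$, the assertion is local, and since $\omega_f$ is invertible (both $\Omega_{X_G}$ and $\Omega_{Y_G}$ are invertible, $f$ being generically étale means $\psi_{Y_G/X_G/k}$ is a nonzero map between invertible sheaves, hence $\tau_f$ trivializes $\omega_f$ away from a closed analytic subset), it suffices to check that the generating section $\tau_f$ is bounded, i.e. that $\|\tau_f\|_\omega$ is a finite function on $Y_G$. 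Equivalently, after reducing to the local picture at a point $y\in Y_G$ with completed residue field $L=\calH(y)$ over $K=\calH(f(y))$, one must show that the homomorphism $\hatOmega_K\otimes_K L\to\hatOmega_L$ underlying $\tau_f$ is bounded, i.e. has finite $\Hom$-quasi-norm.

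The plan is to reduce everything to the field-theoretic statement already established in Theorem~\ref{taufields}(i), namely $\|\tau_{L/K}\|=\delta_{L/K}$. The point is that $\delta_{L/K}$ is always a strictly positive real number (it is $\le 1$, and nonzero precisely because $L/K$ is separable, which holds since $f$ is generically étale and $k$ is perfect so residue extensions cause no issue at the generic level — more precisely, at every point the extension $\calH(y)/\calH(f(y))$ is separable because $f$ is generically étale and the relevant fields are stable). Therefore at each point the fiber $\Hom$-quasi-norm of $\tau_f$ is the finite number $\delta_f(y)>0$, so $\tau_f$ is pointwise bounded. To upgrade pointwise boundedness of the generator to the equality of sheaves, I would cover $Y$ by affinoid (or basic) domains on which $\Omega_{X_G}$, $\Omega_{Y_G}$ and hence $\omega_f$ are free of rank one, fix generators, and observe that on such a domain any homomorphism is a function times a fixed generator; its $\Hom$-seminorm is the norm of that function times the pointwise $\Hom$-seminorm of the generator, which we have just seen is finite everywhere. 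Since the different function $\delta_f$ is known to be $|k^\times|$-pm on skeleta and in particular locally bounded on a nice curve (it takes values in $(0,1]$), the generator $\tau_f$ has bounded $\Hom$-seminorm on each such domain, so it lies in $\calHom^b$; as $\tau_f$ generates $\omega_f$ over the domain, $\omega_f|_U=\calHom^b_{\calO_U}(f^*\Omega_{X_G},\Omega_{Y_G})|_U$, and gluing gives the global statement.

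The main subtlety — really the only one worth isolating — is that $\omega_f$ is genuinely an invertible sheaf even though $\Omega^\rmlog$-type issues and the vanishing of $\calHom_{\calO_X}(\Omega^\rmlog_X,\calO_X)$ warn that $\calHom$ of quasi-coherent sheaves need not behave well: here one must use that $\Omega_{X_G}$ and $\Omega_{Y_G}$ are the \emph{ordinary} (quasi-smooth, hence locally free rank one) sheaves of differentials on nice curves, so $(f^*\Omega_{X_G})'$ makes sense and $\omega_f$ is invertible with $\tau_f$ a global generator exactly on the locus where $\psi_{Y_G/X_G/k}$ is an isomorphism, which is cofinite. Off a finite set of type-$1$ points $\tau_f$ still generates $\omega_f$ as an $\calO_{Y_G}$-module at every point of $Y_G$ — indeed $\delta_f(y)\ne 0$ for all $y$ since $f$ is generically étale and the local extensions are separable — so there is in fact no exceptional locus at the level of $Y_G$, and $\tau_f$ is a genuine everywhere-nonvanishing generator. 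Granting this, the argument above is essentially bookkeeping: boundedness of $\tau_f$ is \emph{equivalent} to finiteness of $\delta_f$, which holds because $\delta_f$ takes values in $(0,1]$. I would also remark that the reverse containment $\calHom^b\subseteq\omega_f$ is trivial, so only the displayed equality needs the argument above.
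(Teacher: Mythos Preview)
Your argument is correct, but it is considerably more roundabout than the paper's. Both proofs begin the same way: since $f$ is generically \'etale, $\tau_f\neq 0$, so $\tau_f$ spans the invertible $\calO_{Y_G}$-module $\omega_f$, and it remains to check that $\tau_f$ is bounded. The paper then finishes in one line: the map $\psi_{Y_G/X_G/k}\:f^*\Omega_{X_G}\to\Omega_{Y_G}$ is \emph{contracting} with respect to the K\"ahler seminorms, hence by the very definition of the $\calHom$-seminorm one has $\|\tau_f\|_\omega\le 1$. No appeal to the different, to Theorem~\ref{taufields}, or to completed residue fields is needed.

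Your route instead computes $\|\tau_f\|_y$ by passing to $\omega_{\calH(y)/\calH(x)}$ and invoking Theorem~\ref{taufields}(i) to identify it with $\delta_{L/K}=\delta_f(y)\in(0,1]$. This is fine logically, but note that the passage to completed fibers is exactly the content of Lemma~\ref{fiberlem}, which in the paper is stated \emph{after} the present lemma (its proof does not depend on Lemma~\ref{boundlem}, so there is no circularity, but you are effectively forward-referencing). More to the point, you are essentially reproving Theorem~\ref{deltatau} ($\delta_f=\|\tau_f\|$) in the course of establishing a much weaker fact. The paper's observation that $\psi$ is contracting yields $\|\tau_f\|\le 1$ with none of this machinery; the identification with $\delta_f$ is then derived later as a genuinely new result rather than being smuggled in here.
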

\begin{proof}
By the assumption on $f$, we have that $\tau\neq 0$ and hence $\tau$ spans the invertible module $\omega_{f}$. It remains to show that $\tau$ is bounded. In fact, one even has that $\|\tau_f\|_\omega\le 1$ because the map $\psi_{Y_G/X_G/k}$ is contracting.
\end{proof}

The following remark will not be used, so we only indicate its justification.

\begin{rem}\label{boundrem}
One can show that $\tau$ always spans the module of bounded bivariant forms. So, if $f$ is not generically \'etale, then $\calHom^b_{\calO_{Y_G}}(f^*\Omega_{X_G},\Omega_{Y_G})=0$. Let us illustrate this with the simplest example when $\cha(k)=p>0$, $Y=\calM(k\{t\})$ and $X=\calM(k\{t^p\})$. It follows from \cite[\S6.2.1]{Temkintopforms} that the module $\Gamma(\Omega^\di_Y)$ consists of all elements $hd_Yt$ with $h\in k\{t\}$ satisfying $|h|_y\le r(y)^{-1}$ for $y\in Y$. In particular, $h$ may have at most simple poles at the closed points. By the same computation, $\Gamma(f^*\Omega^\di_Y)$ consists of all elements $hd_Xt^p$ with $h\in k\{t\}$ satisfying $|h|_y\le r(y)^{-p}$. Clearly, the second module admits no non-zero maps to the first one. Loosely speaking, the sheaf $\Omega_Y^\di$ is huge, but $f^*\Omega^\di_{X_G}$ is even much larger and cannot be embedded in it.
\end{rem}

Combining Corollary~\ref{sempmcor} and Lemma~\ref{boundlem} we obtain

\begin{cor}\label{omegaball}
Let $f$ be a generically \'etale morphism of nice $k$-analytic curves. Then the seminorm $\|\ \|_\omega$ on $\omega_{f}$ is $|k^\times|$-pm and its unit ball can be expressed as $$\omega_{f}^\di=\calHom_{\calOcirc_{Y_G}}(f^*(\Omega^\di_{X_G}),\Omega^\di_{Y_G}).$$
\end{cor}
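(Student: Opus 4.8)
The plan is to deduce Corollary~\ref{omegaball} by simply combining the two immediately preceding results. First I would invoke Lemma~\ref{boundlem}: since $f$ is generically \'etale, the dualizing sheaf $\omega_f=\calHom_{\calO_{Y_G}}(f^*\Omega_{X_G},\Omega_{Y_G})$ coincides with the sheaf of \emph{bounded} homomorphisms $\calHom^b_{\calO_{Y_G}}(f^*\Omega_{X_G},\Omega_{Y_G})$, because the spanning section $\tau_f$ is bounded (indeed $\|\tau_f\|_\omega\le 1$ since $\psi_{Y_G/X_G/k}$ is contracting) and $\omega_f$ is invertible. Thus the $\calHom$-seminorm $\|\ \|_\omega$ is genuinely a seminorm on $\omega_f$ itself, not merely on a subsheaf, and all the machinery of \S\ref{Kahsec} on $\calHom$-seminorms applies.

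Next I would apply Corollary~\ref{sempmcor} with $\calF=f^*\Omega_{X_G}$ and $\calG=\Omega_{Y_G}$. The hypothesis of that corollary is that both $\calF$ and $\calG$ carry $|k^\times|$-pm seminorms. For $\Omega_{Y_G}$ this is the statement established right after Lemma~\ref{sempmlem}: the K\"ahler seminorm is $|k^\times|$-pm because on any annulus with coordinate $t$ the sheaf $\Omega^\di$ is generated on the skeleton by $\frac{dt}{t}$. For $f^*\Omega_{X_G}$ one first notes that the K\"ahler seminorm on $\Omega_{X_G}$ is $|k^\times|$-pm for the same reason, and then that pullback preserves the $|k^\times|$-pm property and respects unit balls — this is precisely part of the content of Corollary~\ref{sempmcor}. (Here I would remark that a finite morphism of curves is $|k^\times|$-pm compatible with the metric structures, so pulling back a $|k^\times|$-pm function along $f$ yields a $|k^\times|$-pm function; this is the only point where one must be slightly careful.) Granting this, Corollary~\ref{sempmcor} immediately yields that the $\calHom$-seminorm on $\calH=\calHom^b_{\calO_{Y_G}}(f^*\Omega_{X_G},\Omega_{Y_G})$ is $|k^\times|$-pm and that its unit ball is $\calH^\di=\calHom_{\calOcirc_{Y_G}}(f^*(\Omega^\di_{X_G}),\Omega^\di_{Y_G})$.

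Finally I would splice the two conclusions together: by Lemma~\ref{boundlem}, $\omega_f=\calH$, so the $|k^\times|$-pm statement transfers verbatim to $\omega_f$, and the unit ball formula reads $\omega_f^\di=\calHom_{\calOcirc_{Y_G}}(f^*(\Omega^\di_{X_G}),\Omega^\di_{Y_G})$, which is exactly the assertion. I do not expect any serious obstacle: the proof is a two-line assembly of Lemma~\ref{boundlem} and Corollary~\ref{sempmcor}. If anything needs attention it is the bookkeeping that the pullback $f^*$ appearing inside the $\calHom$ is the pullback \emph{of seminormed sheaves} — i.e. that $(f^*\Omega_{X_G})^\di=f^*(\Omega^\di_{X_G})$ as unit balls — but this is subsumed in the phrase of Corollary~\ref{sempmcor} that pullbacks ``respect unit balls.'' Hence the proof is essentially: \emph{combine Corollary~\ref{sempmcor} and Lemma~\ref{boundlem}}, as the text already announces.
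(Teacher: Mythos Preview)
Your proposal is correct and matches the paper's own argument exactly: the text simply states that the corollary is obtained by combining Corollary~\ref{sempmcor} and Lemma~\ref{boundlem}, which is precisely the assembly you carry out. Your extra remarks about checking that $f^*$ preserves the $|k^\times|$-pm property and respects unit balls are the only nontrivial bookkeeping, and you correctly identify that this is subsumed in Corollary~\ref{sempmcor}.
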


\begin{rem}
This time we do not have a tensor description of the unit ball. The reason is that the bounded dual of $\Omega_{X_G}$ vanishes. Again, this happens because $\Omega^\di_{X_G}$ is much larger than $\calOcirc_{X_G}$ and cannot be embedded into it.
\end{rem}

\subsubsection{Completed fibers}
For any point $y\in Y$, the seminorm $\|\ \|_\omega$ induces seminorms on the stalk $\omega_{f,y}$ and the fiber $\omega_f(y)=\omega_{f,y}\otimes\kappa_G(y)$. We claim that they are compatible with the seminorm on the dualizing module of $\calH(y)/\calH(x)$.

\begin{lem}\label{fiberlem}
Let $f\:Y\to X$ be a generically \'etale morphism of nice $k$-analytic curves and $y\in Y$ a point. Then the maps $\omega_{f,y}\to\omega_f(y)\to\omega_{\calH(y)/\calH(x)}$ are isometries. In particular, the completed fiber $\wh{\omega_f(y)}$ coincides with $\omega_{\calH(y)/\calH(x)}$.
\end{lem}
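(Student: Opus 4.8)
The plan is to deduce the statement for the bivariant sheaf $\omega_f$ from the already known local compatibility of the K\"ahler seminorm on $\Omega_{X_G}$, together with the good behaviour of $\calHom$-seminorms under localization recorded in Corollary~\ref{sempmcor}.

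\emph{Step 1 (the case of $\Omega$).} First I would record that for a nice curve $X$ and a point $x\in X$ the natural arrows $\Omega_{X_G,x}\to\Omega_{X_G}(x)\to\hatOmega_{\calH(x)/k}$ are isometric, so that the completed fiber $\wh{\Omega_{X_G}(x)}$ is canonically $\hatOmega_{\calH(x)/k}$ with its K\"ahler seminorm. For the first arrow this is immediate from Lemma~\ref{sempmlem}: a pm seminorm on an invertible sheaf has an invertible unit ball, so a generator of the stalk maps to a generator of the fiber and the seminorm is computed by the same pm function at $x$. For the second arrow it is the local compatibility of K\"ahler seminorms underlying Remark~\ref{ordrem}(ii), i.e. \cite[Theorem~5.6.6]{Temkintopforms} applied formally-locally at $x$. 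The same applies to $\Omega_{Y_G}$ at $y$. Finally, since $\Omega_{X_G}$ is invertible, the pullback seminorm on $f^*\Omega_{X_G}$ (Corollary~\ref{sempmcor}) has completed fiber at $y$ equal to $\hatOmega_{\calH(x)/k}\otimes_{\calH(x)}\calH(y)$, equipped with the scalar extension of the K\"ahler seminorm; the tensor seminorm causes no trouble here because the module is free of rank one.

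\emph{Step 2 (passing $\calHom$ to the fiber).} By Corollary~\ref{sempmcor} the $\calHom$-seminorm and its unit ball commute with the operations at hand, in particular with localization at $y$ and with reduction modulo the maximal ideal; hence $\omega_{f,y}$, resp. $\omega_f(y)=\omega_{f,y}\otimes\kappa_G(y)$, is $\Hom$ of the corresponding stalks, resp. fibers, endowed with the $\Hom$-seminorm, and tensoring up to $\calH(y)=\wh{\kappa_G(y)}$ gives $\wh{\omega_f(y)}=\Hom_{\calH(y)}\bigl(\hatOmega_{\calH(x)/k}\otimes_{\calH(x)}\calH(y),\hatOmega_{\calH(y)/k}\bigr)$ with the $\Hom$-seminorm. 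Since every module in sight is one-dimensional over the relevant field, a homomorphism is determined by the image of one generator and the $\Hom$-seminorm is literally the ratio of the norm of a generator to the norm of its image; this description is manifestly stable under completing scalars, so all the arrows $\omega_{f,y}\to\omega_f(y)\to\wh{\omega_f(y)}$ are isometries. By adjunction for free rank-one modules one has $\Hom_{\calH(y)}(\hatOmega_{\calH(x)/k}\otimes_{\calH(x)}\calH(y),\hatOmega_{\calH(y)/k})=\Hom_{\calH(x)}(\hatOmega_{\calH(x)/k},\hatOmega_{\calH(y)/k})=\omega_{\calH(y)/\calH(x)}$, and tracing through the identifications this isomorphism is isometric for the $\Hom$-seminorm on the right, which is exactly the norm on $\omega_{\calH(y)/\calH(x)}$ of \S\ref{bivardifsec}; it also carries $\tau_f$ to $\tau_{\calH(y)/\calH(x)}$, since both correspond to the map on differentials induced by $f$.

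\emph{Main obstacle.} The only non-formal ingredient is the local compatibility of the K\"ahler seminorm on $\Omega_{X_G}$ with the completed residue field, i.e. the isometry $\wh{\Omega_{X_G}(x)}\cong\hatOmega_{\calH(x)/k}$ in Step~1; once this is granted, the rest is bookkeeping with one-dimensional modules and the formal properties of $\calHom$-seminorms from Corollary~\ref{sempmcor}. A secondary point requiring a little care is that the pullback seminorm on $f^*\Omega_{X_G}$ really induces the scalar-extension seminorm on the fiber over $y$, and not a strictly smaller one, which holds precisely because $\Omega_{X_G}$ is invertible.
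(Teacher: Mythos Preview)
Your proposal is correct and follows essentially the same approach as the paper: reduce to the known isometry statements for the K\"ahler seminorm on $\Omega_X$ and $\Omega_Y$, and then pass through the $\calHom$ construction. The paper's own proof is a two-sentence reference to \cite[Theorem~6.1.8 and Corollary~6.1.9]{Temkintopforms} for the $\Omega$ case and then ``applying the $\calHom$ functor''; you simply unpack this more carefully. One small point: the reference you invoke for Step~1, namely \cite[Theorem~5.6.6]{Temkintopforms} via Remark~\ref{ordrem}(ii), is not quite the right one (that remark is in the trivially valued setting); the relevant isometry of stalks and fibers for $\Omega_{X_G}$ over a nontrivially valued $k$ is precisely \cite[Theorem~6.1.8 and Corollary~6.1.9]{Temkintopforms}, which is what the paper cites.
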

\begin{proof}
Analogous results for K\"ahler seminorms on $\Omega_X$ and $\Omega_Y$ were proved in \cite[Theorem~6.1.8 and Corollary~6.1.9]{Temkintopforms}. Applying the $\calHom$ functor we obtain the assertion.
\end{proof}

Notice that the claim holds for points of type 1 as well. In this case, the seminorms on $\omega_{f,y}$ and $\omega_f(y)$ are zero, and hence $\wh{\omega_f(y)}$ equals $0=\omega_{k/k}$.

\subsubsection{A new interpretation of the different}
As an application of the above theory we obtain a very conceptual interpretation of $\delta_f$ in terms of the bivariant form $\tau_f$.

\begin{theor}\label{deltatau}
Let $f\:Y\to X$ be a generically \'etale morphism of nice $k$-analytic curves. Then $\delta_f=\|\tau_f\|$.
\end{theor}
\begin{proof}
Both functions are pm, hence it suffices to compare them at a point $y\in Y^\hyp$. By Lemma~\ref{fiberlem}, we can compute $\|\tau_f\|_y$ on the level of completed residue fields, that is, $\|\tau_f\|_y=\|\tau_{\calH(y)/\calH(x)}\|$. By Theorem~\ref{taufields}(i), the latter number equals $\delta_{\calH(y)/\calH(x)}$, which coincides with $\delta_f(y)$ by the definition of $\delta_f$.
\end{proof}

\subsubsection{Computation of the different}
Finally, we provide a convenient explicit formula for $\delta_f$, which is an analogue of Lemma~\ref{diflem0}(i). By $f^*(\|\phi\|_{X})=\|\phi\|_{X}\circ f$ we denote the pullback of the real-valued function $\|\phi\|_{X}$ to $Y$.

\begin{lem}\label{difflem}
Let $f\:Y\to X$ be a generically \'etale morphism of nice $k$-analytic curves, and let $\phi\in\Omega_X(X)$ be any non-zero differential form on $X$. Then $\delta_f=\|f^*(\phi)\|_Y/f^*(\|\phi\|_X)$.
\end{lem}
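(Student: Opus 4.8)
The plan is to combine the new interpretation $\delta_f=\|\tau_f\|$ from Theorem~\ref{deltatau} with the tensor description of the bivariant form $\tau_f$. Fix a non-zero global form $\phi\in\Omega_X(X)$. Since $X$ is connected, $\phi$ is non-zero on the whole generic stalk, so it defines a basis of $\Omega_{X_G}$ over the sheaf of meromorphic functions, and we may write $\tau_f=\phi'\otimes f^*(\phi)$ in the notation of \S\ref{bivardifsec} (this is the analogue of the formula $\tau_f=\phi'\otimes f^*(\phi)$ recorded in the trivially-valued case and, field-locally, in \S\ref{anfieldssec}). Here $\phi'$ is the dual basis of $(f^*\Omega_{X_G})'$ and $f^*(\phi)=\psi_{Y_G/X_G/k}(\phi)$.

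Next I would evaluate the $\calHom$-seminorm $\|\ \|_\omega$ on this rank-one tensor pointwise. At a point $y\in Y$ with $x=f(y)$, the fiber of $(f^*\Omega_{X_G})'$ is the dual of the one-dimensional space $\Omega_{X_G}(x)\otimes\kappa_G(y)$, so the norm of the functional $\phi'$ is exactly $f^*(\|\phi\|_X)^{-1}$ evaluated at $y$ — this is just the definition of the dual norm on a one-dimensional space, together with the fact (Corollary~\ref{sempmcor}) that pullback of the K\"ahler seminorm computes $f^*(\|\phi\|_X)$. Meanwhile $\|f^*(\phi)\|_\omega$ at $y$ is the K\"ahler norm of the form $f^*(\phi)$ on $Y$ at $y$. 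Since the $\calHom$-seminorm is multiplicative on the rank-one tensor $\phi'\otimes f^*(\phi)$, we get
$$\|\tau_f\|_y=\|\phi'\|_y\cdot\|f^*(\phi)\|_y=\frac{\|f^*(\phi)\|_Y(y)}{f^*(\|\phi\|_X)(y)}.$$
Combining with Theorem~\ref{deltatau} gives $\delta_f=\|f^*(\phi)\|_Y/f^*(\|\phi\|_X)$ as functions on $Y$, noting that $f^*(\|\phi\|_X)$ is nowhere zero since $\phi\neq0$ and $f$ is finite.

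The only genuinely delicate point is the identification of the dual norm: one must check that on the one-dimensional fiber $(f^*\Omega_{X_G})'(y)$ the $\calHom$-seminorm is honestly the reciprocal of the source norm, rather than something merely bounded by it. This is where Corollary~\ref{sempmcor} (basic operations respect unit balls, and in particular duals of $|k^\times|$-pm seminorms behave correctly) and Lemma~\ref{fiberlem} (the fiber norm on $\omega_f$ is the norm of $\omega_{\calH(y)/\calH(x)}$, and similarly for $\Omega_{X_G}$, $\Omega_{Y_G}$) do the work: passing to completed residue fields, $\hatOmega_{\calH(x)/k}$ and $\hatOmega_{\calH(y)/k}$ are one-dimensional, so the dual pairing is perfect and the dual norm is exactly reciprocal. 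I expect the rest to be a routine unwinding of the tensor-norm definitions; one should just be a little careful to run the argument at points $y$ of type $2$ (the hyperbolic points, which determine pm functions) and then invoke that both sides are $|k^\times|$-pm to conclude equality on all of $Y$ — exactly as in the proof of Theorem~\ref{deltatau}.
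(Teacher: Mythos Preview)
Your argument is correct, but it takes a different route from the paper. The paper's proof is a single line: check the identity at a hyperbolic point $y$, set $\psi=f^*(\phi)$, and invoke \cite[Theorem~2.4.4]{CTT}, which directly asserts $\delta_{\calH(y)/\calH(x)}=\|\psi(y)\|_{\hatOmega,\calH(y)}/\|\phi(x)\|_{\hatOmega,\calH(x)}$. In other words, the paper treats Lemma~\ref{difflem} as an immediate restatement of the field-level result from \cite{CTT}, independent of Theorem~\ref{deltatau}.

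Your approach instead deduces the lemma from Theorem~\ref{deltatau} by writing $\tau_f=\phi'\otimes f^*(\phi)$ and computing the $\calHom$-norm of this rank-one tensor. This is logically fine (Theorem~\ref{deltatau} does not depend on Lemma~\ref{difflem}), and it has the virtue of staying entirely within the paper's framework rather than citing \cite{CTT} again. But note that Theorem~\ref{deltatau} itself rests on Theorem~\ref{taufields}(i), whose proof is precisely the same appeal to \cite[Theorem~2.4.4]{CTT}; so your route is really the paper's one-line argument wrapped inside the extra layer of $\|\tau_f\|$. The unwinding of the dual/tensor norm you flag as ``delicate'' is in fact trivial here because all fibers are one-dimensional, so the $\calHom$-seminorm is exactly $\|f^*(\phi)\|/\|\phi\|$ by definition---no need to pass through Corollary~\ref{sempmcor} or invertibility of unit balls.
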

\begin{proof}
It suffices to check this at a point $y\in Y^\hyp$. Set $x=f(y)$ and $\psi=f^*(\phi)$, then $\delta_{\calH(y)/\calH(x)}=\|\psi(y)\|_{\hatOmega,\calH(y)}/\|\phi(x)\|_{\hatOmega,\calH(x)}$ by \cite[Theorem~2.4.4]{CTT}.
\end{proof}

\subsection{Reductions}

\subsubsection{General framework}
Recall that for any $x\in X$ of type 2, the curve $C_x$ naturally embeds into $X_G$. Given a seminormed $\calO_{X_G}$-module $\calF$ with unit ball $\calF^\di$, the reduction of $\calF_{C_x}:=\calF|_{C_x}$ is defined to be $\tilcalF_{C_x}:=\calF^\di_{C_x}\otimes_{\kcirc}\tilk$. Its stalk at the generic point $x$ will be denoted $\tilcalF_x$. As customary, the image of a section $s$ under reduction will be denoted $\tils\in\tilcalF_x$. These notions are especially useful for $|k^\times|$-pm seminorms because of the following lemma.

\begin{lem}\label{redlem}
Let $X$ be a nice curve and $(\calF,\|\ \|)$ an invertible $\calO_X$-module provided with a $|k^\times|$-pm seminorm. Then $\tilcalF_{C_x}$ is a lattice in $\tilcalF_x$ for any point $x\in X$ of type 2, and hence induces an order $\ord_v$ for any closed point $v\in C_x$. Furthermore, if $s$ is a section of $\calF$ on a $G$-neighborhood of $v$, then $\|s\|_x=|c^{-1}|$ for some $c\in k$ and the reduction $\wt{cs}\in\tilcalF_x$ can be used to compute the slopes of $\|s\|$ via $$\slope_v(\|s\|)=-\ord_v(\wt{cs}).$$
\end{lem}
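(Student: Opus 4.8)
The plan is to reduce the statement to the formal-local situation and then to the already-established orders on curves over $\tilk$.

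First I would recall that since $\calF$ is invertible and $\|\ \|$ is a $|k^\times|$-pm seminorm, Lemma~\ref{sempmlem} tells us that for any interval $I\subset X$ the restriction $\calF^\di|_I$ is an invertible $\calOcirc_{X_G}|_I$-module. Applying this with $I$ an interval emanating from $x$ in the direction $v$, together with the definition of the reduction $\tilcalF_{C_x}=\calF^\di_{C_x}\otimes_{\kcirc}\tilk$, I would argue that $\tilcalF_{C_x}$ is an invertible $\calO_{C_x}$-module: working locally on $X$ around $x$, pick a generator $s$ of $\calF$, normalize by $c\in k$ so that $\|s\|_x=|c^{-1}|$, and observe that $cs$ restricts to a generator of $\calF^\di$ on a $G$-neighborhood of the generic point of $C_x$, hence $\wt{cs}$ generates $\tilcalF_{C_x}$ near the generic point. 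Since an invertible module on a curve is in particular a lattice in its generic stalk, this gives the first claim and equips $\tilcalF_x$ with orders $\ord_v$ at closed points $v\in C_x$.

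Next, for the slope formula, I would fix a section $s$ on a $G$-neighborhood of $v$ and choose $c\in k$ with $\|s\|_x=|c^{-1}|$, so that $\wt{cs}\in\tilcalF_x$ is defined and nonzero (by construction $\|cs\|_x=1$). The key point is that the $|k^\times|$-pm function $\|s\|$ restricted to an interval $I$ from $x$ in the direction $v$ is monomial near $x$: writing the local parameter of the interval as the radius coordinate, $\slope_v(\|s\|)$ is exactly the exponent with which $\|cs\|$ grows or decays as one moves into $C_x$. On the other hand $\ord_v(\wt{cs})$ measures the order of vanishing of $\wt{cs}$ as a section of the lattice $\tilcalF_{C_x}$ at $v$, which by the identification of the reduction with the completed-fiber picture (Lemma~\ref{fiberlem}, applied to $\Omega$ and transported through $\calHom$) corresponds to how many times one must divide by a uniformizer of $\calO_{C_x,v}$ for $\wt{cs}$ to become a local generator. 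Matching these two descriptions — one on the Berkovich interval side via the radius, one on the reduction curve side via the valuation at $v$ — yields $\slope_v(\|s\|)=-\ord_v(\wt{cs})$, with the sign coming from the convention that outward slope is positive while $\ord_v$ counts vanishing.

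The main obstacle, I expect, is making the identification in the previous paragraph precise: one must relate the metric (slope) data living on the Berkovich curve near the type-2 point $x$ with the algebraic order $\ord_v$ on the reduction curve $C_x$. Concretely this amounts to a careful local computation on an annulus (or a disc with the point $v$ on its skeleton), where $\Omega^\di$ is generated by $\frac{dt}{t}$ and the radius function interpolates the valuation; one then transports this through the $\calHom$-construction that defines $\|\ \|$ on $\calF$. All of this is formal once the invertibility from Lemma~\ref{sempmlem} is in hand, so I would organize the write-up as: (1) invertibility and lattice property via Lemma~\ref{sempmlem}; (2) the local monomial description of $\|s\|$ on an interval; (3) the comparison with $\ord_v$ on $C_x$; (4) bookkeeping of signs. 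Routine annulus computations, already available from \cite[Theorem~4.3.3]{CTT} and \cite[\S6.1]{Temkintopforms}, fill in step (3).
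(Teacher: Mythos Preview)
Your outline is correct in spirit, but the crucial step is left underspecified and one citation is misplaced.

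The appeal to Lemma~\ref{fiberlem} does not belong here: that result concerns the specific sheaf $\omega_f$ with its K\"ahler norm, whereas the present lemma is about an \emph{arbitrary} invertible $\calF$ carrying an arbitrary $|k^\times|$-pm seminorm. There is no completed-fiber identification to ``transport through $\calHom$'' in this generality, so that parenthetical should be dropped.

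More importantly, your step~(3) --- matching the monomial behaviour of $\|s\|$ on an interval with $\ord_v$ on $C_x$ --- is precisely where the content lies, and you defer it to unspecified annulus computations. The paper does not defer. After normalizing so that $\|s\|_x=1$ and setting $n=\slope_v(\|s\|)$, one chooses $t_v\in\calOcirc_{X,x}$ whose reduction $\tilt_v$ is a uniformizer at $v$, and twists $s$ by the appropriate power of $t_v$ so that the resulting section $s'$ satisfies $\|s'\|\equiv 1$ on a short interval $I$ in the direction $v$. Then $s'$ generates $\calF^\di$ along $I$, hence $\tils'$ generates $\tilcalF_{C_x,v}$. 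This single move simultaneously proves that $\tilcalF_{C_x,v}$ is invertible (your step~(1), without a separate appeal to Lemma~\ref{sempmlem}) and, since $\tils$ and $\tils'$ differ by a known power of the uniformizer $\tilt_v$, reads off $\ord_v(\tils)$ directly and gives the slope formula.

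So your plan is not wrong, but the paper's twist-by-a-uniformizer argument carries out your ``matching'' in one line and makes the separate lattice verification, the reference to Lemma~\ref{fiberlem}, and the external annulus computations all unnecessary.
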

\begin{proof}
We will prove both claims together. In the first one, no $s$ is given, so we simply fix a non-zero $s$. By definition, $\|s\|_x\in|k^\times|$ hence a required $c\in k$ exists. Multiplying $s$ by $c^{-1}$ does not affect $n=\slope_v(\|s\|)$, hence we can assume that $\|s\|=1$. Fix $t_v\in\calOcirc_{X,x}$ such that $\tilt_v$ is a uniformizer at $v$. Then $\slope_v(\|t_v\|)=-1$ and hence $s'=t_v^{n}s$ has slope 0 at $v$. In particular, if $I=[x,x']$ is a sufficiently small interval in the direction of $v$ then $\|s'\|=1$ along $I$. It follows that $s'$ generates $\calF^\di_v$ and hence $\tils'$ generates $\tilcalF_{C_x,v}$. In particular, $\tilcalF_{C_x,v}$ is invertible and hence defines an order $\ord_v$. Moreover, since $\tils'=\tilt_v^{n}\tils$, we obtain that $\ord_v(\tils)=-n$.
\end{proof}

\subsubsection{Reduction of the dualizing sheaf}
The above lemma in fact reduces to a tautological unraveling of definitions. An informative input is obtained by computing reductions of sheaves one studies. In our case, this is done as follows:

\begin{theor}\label{redomega}
Assume that $Y$ is a nice $k$-analytic curve and $y\in Y$ is a point of type 2. Then $\tilOmega_{Y_G,C_y}=\Omega^\rmlog_{C_y/\tilk}$. Furthermore, if $f\:Y\to X$ is a generically \'etale finite morphism of nice curves then $\tilomega_{f,C_y}=\omega^\rmlog_{\tilf_y}$.
\end{theor}
\begin{proof}
The first claim was proved in \cite[Lemma~4.5.2]{CTT}. By Corollary \ref{omegaball} we obtain that
$$\tilomega_{f,C_y}=\calHom_{\calOcirc_{Y_G,C_y}}(f^*(\Omega^\di_{X_G,C_y}),\Omega^\di_{Y_G,C_y})\otimes_{\kcirc}\tilk=\calHom_{\calO_{C_y}}(\tilf_y^*(\Omega^\rmlog_{C_x}),
\Omega^\rmlog_{C_y})=\omega^\rmlog_{\tilf_y}.$$
\end{proof}

As a corollary we can describe behaviour of slopes of the K\"ahler seminorm of differential forms at points of type 2.

\begin{cor}
Let $X$ be a nice $k$-analytic curve, $x\in X$ a point of type 2, and $\phi$ a differential form defined in a neighborhood of $x$. Then $m_v=\slope_v(\|\phi\|)+1$ equals zero for almost any $v\in C_x$, and the divisor $\calO(-\sum_{v\in C_x}m_vv)$ is rationally equivalent to $\Omega_{C_x/\tilk}$. In particular, if $x$ is an inner point then $\sum_{v\in C_x}m_v=2-2g(x)$.
\end{cor}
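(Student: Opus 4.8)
The plan is to translate everything into the reduction of $\Omega_{X_G}$ at $x$ by means of Lemma~\ref{redlem} and Theorem~\ref{redomega}, and then to read the numbers $m_v$ off the divisor of a single meromorphic differential form on $C_x$. Assume $\phi\neq 0$. First I would choose $c\in k$ with $\|\phi\|_x=|c^{-1}|$ --- this is possible because the K\"ahler seminorm on $\Omega_{X_G}$ is $|k^\times|$-pm --- and form the reduction $\wt{c\phi}\in\tilOmega_{X_G,x}$. By Theorem~\ref{redomega} we have $\tilOmega_{X_G,C_x}=\Omega^\rmlog_{C_x/\tilk}$, so $\wt{c\phi}$ is a meromorphic section of $\Omega^\rmlog_{C_x/\tilk}$, non-zero because $\|c\phi\|_x=1$. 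Lemma~\ref{redlem}, applied with $\calF=\Omega_{X_G}$, then gives $\slope_v(\|\phi\|)=-\ord_v(\wt{c\phi})$ for every closed point $v\in C_x$, where $\ord_v$ is the order attached to the lattice $\tilOmega_{X_G,C_x}=\Omega^\rmlog_{C_x/\tilk}$, i.e.\ the logarithmic order $\logord_v$ relative to $\Omega_{C_x/\tilk}$.

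Next I would invoke the identity $\logord_v=\ord_v+1$ relating the logarithmic and the ordinary differential order on $C_x$ to rewrite $$m_v=\slope_v(\|\phi\|)+1=-\logord_v(\wt{c\phi})+1=-\ord_v(\wt{c\phi}),$$ where now $\ord_v$ is taken with respect to $\Omega_{C_x/\tilk}$. Since $\wt{c\phi}$ is a fixed non-zero meromorphic differential form on the smooth curve $C_x$, its divisor is supported on finitely many points, so $m_v=0$ for almost all $v$; moreover $-\sum_{v\in C_x}m_vv=\sum_{v\in C_x}\ord_v(\wt{c\phi})\,v$ is precisely the divisor of $\wt{c\phi}$ viewed as a section of $\Omega_{C_x/\tilk}$, whence $\calO(-\sum_v m_vv)\cong\Omega_{C_x/\tilk}$, which is the asserted rational equivalence. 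If $x$ is inner then $C_x$ is a smooth proper $\tilk$-curve of genus $g_x$, and comparing degrees in this equivalence gives $-\sum_v m_v=\deg\Omega_{C_x/\tilk}=2g_x-2$, i.e.\ $\sum_v m_v=2-2g_x$.

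There is no real obstacle here beyond the two quoted results: the whole content is the identification $\tilOmega_{X_G,C_x}=\Omega^\rmlog_{C_x/\tilk}$ together with the slope formula of Lemma~\ref{redlem}. The only point needing care --- and the natural place for a sign slip --- is to remember that the order produced by Lemma~\ref{redlem} in this situation is the \emph{logarithmic} one, so that the ``$+1$'' built into the definition of $m_v$ is exactly what converts $\logord_v(\wt{c\phi})$ into the ordinary order $\ord_v(\wt{c\phi})$.
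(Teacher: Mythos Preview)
Your argument is correct and follows essentially the same route as the paper's own proof: rescale so that $\|c\phi\|_x=1$, reduce, identify the reduction lattice with $\Omega^\rmlog_{C_x/\tilk}$ via Theorem~\ref{redomega}, use Lemma~\ref{redlem} to read the slopes as (negative) logarithmic orders, and then shift by $1$ to pass to the ordinary differential order. Your additional remarks on why $\wt{c\phi}\neq 0$, why the divisor of a meromorphic section yields the asserted rational equivalence, and why properness of $C_x$ at an inner point gives the degree computation, are all appropriate.
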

\begin{proof}
Choose $c\in k$ with $|c|=\|\phi\|_x$ and set $\psi=\wt{c^{-1}\phi}\in\Omega_{\wHx/\tilk}$. By Lemma~\ref{redlem}, $1-m_v$ is the order of $\psi$ at $v$ with respect to the reduction of $\Omega_{X_G,C_x}$ at $x$, which is $\Omega^\rmlog_{C_x/\tilk}$ by Theorem~\ref{redomega}. Thus, $m_v=1-\logord_v(\psi)=-\ord_v(\psi)$ is the usual differential order of $\psi$. The assertion follows.
\end{proof}

Absolutely in the same way one describes seminorms of bivariant forms, so we formulate the result and skip the proof. By $n_y$ we denote the multiplicity of $f$ at $y$, see \S\ref{convsec}.

\begin{cor}\label{slopebivar}
Let $f\:Y\to X$ be a generically \'etale morphism of nice $k$-analytic curves, $y\in Y$ a point of type 2, and $\phi\in\omega_f(U)$ a bivariant form defined in a neighborhood of $y$. Then $m_v=\slope_v(\|\phi\|)+1-n_v$ equals zero for almost any $v\in C_y$, and the divisor $\calO(-\sum_{v\in C_y}m_vv)$ is rationally equivalent to $\omega_{\tilf_y}$. In particular, if $y$ is an inner point and $x=f(y)$, then $\sum_{v\in C_y}m_v=2-2g(y)-n_y(2-2g(x))$.
\end{cor}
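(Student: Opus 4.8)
The plan is to mimic exactly the proof strategy of the preceding corollary about $\|\phi\|$ on $\Omega_X$, replacing the sheaf $\Omega_{X_G}$ by the dualizing sheaf $\omega_f$ and the reduction $\Omega^\rmlog_{C_x/\tilk}$ by $\omega^\rmlog_{\tilf_y}$, which is available by Theorem~\ref{redomega}. First I would fix $y\in Y$ of type $2$, set $x=f(y)$, and choose $c\in k$ with $|c|=\|\phi\|_y$; this is possible since the seminorm $\|\ \|_\omega$ is $|k^\times|$-pm by Corollary~\ref{omegaball}, so $\|\phi\|_y\in|k^\times|$. Put $\psi=\wt{c^{-1}\phi}\in\omega_{\tilf_y}^{\rmlog}\otimes\kappa(y)^{\sim}$, a nonzero meromorphic section of $\omega^\rmlog_{\tilf_y}$; here I invoke Lemma~\ref{redlem} for the invertible $|k^\times|$-pm module $\omega_f$, together with the identification $\tilomega_{f,C_y}=\omega^\rmlog_{\tilf_y}$ from Theorem~\ref{redomega}.

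Next I would translate the slope computation. By Lemma~\ref{redlem}, $\slope_v(\|\phi\|)=-\ord_v(\psi)$ where $\ord_v$ is the order with respect to the lattice $\omega^\rmlog_{\tilf_y}$, i.e.\ the logarithmic order $\logord_v$ of $\psi$ as a meromorphic bivariant form for $\tilf_y$. From the discussion of orders of bivariant forms in \S\ref{onedim} (the relation $\logord_v=\ord_v-e_v+1$, with $e_v=n_v$ the ramification/multiplicity of $\tilf_y$ at $v$), one gets $\logord_v(\psi)=\ord_v(\psi)-n_v+1$ for the ordinary order with respect to $\omega_{\tilf_y}$. Hence $m_v:=\slope_v(\|\phi\|)+1-n_v=-\logord_v(\psi)+1-n_v+n_v= -(\ord_v(\psi)-n_v+1)+1-n_v+\dots$, and after the bookkeeping this is exactly $-\ord_v(\psi)$, the ordinary order of $\psi$ as a section of the invertible sheaf $\omega_{\tilf_y}$. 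Since $\psi$ is a nonzero meromorphic section of the invertible sheaf $\omega_{\tilf_y}$ on the complete curve $C_y$, its divisor of zeros and poles $\sum_v \ord_v(\psi)\,v$ is rationally equivalent to $\omega_{\tilf_y}$, and $\ord_v(\psi)=0$ for almost all $v$; this gives both stated assertions, $m_v=0$ for almost all $v$ and $\calO(-\sum_v m_v v)\sim\omega_{\tilf_y}$.

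Finally, for the degree statement when $y$ is an inner point: $\deg\omega_{\tilf_y}=\deg\bigl((\tilf_y^*\Omega_{C_x})'\otimes\Omega_{C_y}\bigr)=\deg\Omega_{C_y}-n_y\deg\Omega_{C_x}=(2g_y-2)-n_y(2g_x-2)$, using that $C_x$, $C_y$ are the (complete) reduction curves at inner points $x$, $y$ of genera $g_x$, $g_y$ and $\deg\tilf_y=n_y$. Comparing with $\deg\calO(-\sum_v m_v v)=-\sum_v m_v$ yields $\sum_{v\in C_y}m_v=2-2g_y-n_y(2-2g_x)$, as claimed.

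The only genuinely delicate point — and the one the author tells us to skip — is the index bookkeeping relating $\slope_v(\|\phi\|)$, the logarithmic order with respect to $\omega^\rmlog_{\tilf_y}$, and the ordinary order with respect to $\omega_{\tilf_y}$, which must produce precisely the correction term $+1-n_v$ appearing in the definition of $m_v$; this is the bivariant analogue of the $+1$ correction in the preceding corollary (there $m_v=\slope_v(\|\phi\|)+1$, here the extra $-n_v$ reflecting that the dual of $\tilf_y^*\Omega_{C_x}$ contributes ramification). Everything else is a routine transcription of the differential-form argument, so I would indeed "formulate the result and skip the proof," exactly as the excerpt does.
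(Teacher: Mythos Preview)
Your proposal is correct and follows exactly the approach the paper intends: the paper explicitly skips this proof, stating it is done ``absolutely in the same way'' as the preceding corollary, and you faithfully transcribe that template, replacing $\Omega^\rmlog_{C_x/\tilk}$ by $\omega^\rmlog_{\tilf_y}$ via Theorem~\ref{redomega} and using $\logord_v=\ord_v-n_v+1$ to obtain $m_v=-\ord_v(\psi)$. Your displayed bookkeeping has a stray ``$+n_v$'' and trails off, but the final identity $m_v=-\ord_v(\psi)$ is correct and everything after it is fine.
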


\begin{rem}
Theorem \ref{deltatau} and Corollary~\ref{slopebivar} immediately imply the local RH formula in Theorem~\ref{maindif}(ii). Moreover, if $g(y)>0$ then $\Pic^0(C_y)$ is non-trivial and this imposes a further restriction on the slopes.  In fact, we see that the slopes of $\delta_f=\|\tau\|_f$ are controlled by a finer invariant, the reduction of $\tau_f$. On the other hand, so far we have not obtained a new explanation of some other properties of the different, including the restrictions on its slopes from Theorem~\ref{slopeth}(iii). We will see in Remark~\ref{sloperem2} that they are related to the fact that the reduction of $\tau_f$ is a meromorphic bivariant form of a rather special type.
\end{rem}

\subsection{Reduction of the different}\label{reddifsec}
Our next goal is to study reduction of $\tau_f$.

\subsubsection{Scaled reduction}
For a point $y\in Y$ of type 2, take $c\in k$ with $|c|=\delta_f(y)$ and consider the meromorphic bivariant form $\tiltau_{f,y}=\wt{c^{-1}\tau_f}$ on $C_y$. We call $\tiltau_{f,y}$ the {\em scaled reduction} of $\tau_f$ at $y$; it is defined up to multiplying by a constant $a\in\tilk^\times$. The notation using tilde is slightly abusing but this will not lead to any confusion. 
Let us summarize what we know about $\tiltau_{f,y}$ so far.

\begin{lem}\label{tiltaulem}
With the above notation

(i) $\slope_v(\delta_f)=-\logord_v(\tiltau_{f,y})=-\ord_v(\tiltau_{f,y})+n_v-1$,

(ii) $\tiltau_{f,y}=\tiltau_{\calH(y)/\calH(x)}$,

(iii) $\tiltau_{f,y}=(\tilphi)'\otimes\tilpsi\in(\tilf_y^*\Omega_{\wHx/\tilk})'\otimes\Omega_{\wHy/\tilk}$, where $\phi\in\hatOmega_{\calH(x)}$ is any form satisfying $\|\phi\|=1$, and $\psi\in\hatOmega_{\calH(y)}$ is the pullback of $c\phi$, where $c\in k$ is such that $|c^{-1}|=\delta_f(y)$.
\end{lem}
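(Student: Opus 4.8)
The plan is to read off all three assertions from the machinery already assembled, treating the scaled reduction $\tiltau_{f,y}=\wt{c^{-1}\tau_f}$ through the identifications of Lemma~\ref{fiberlem}, Lemma~\ref{redlem}, Theorem~\ref{redomega}, and Theorem~\ref{taufields}(ii). First I would dispose of (ii): by Lemma~\ref{fiberlem} the map $\omega_{f,y}\to\omega_{\calH(y)/\calH(x)}$ is an isometry, so the K\"ahler norm and its unit ball on $\omega_f$ restrict at $y$ to the norm and unit ball on $\omega_{\calH(y)/\calH(x)}$ studied in \S\ref{bivardifsec}. Since $\|\tau_f\|_y=\delta_f(y)=\delta_{\calH(y)/\calH(x)}$ by Theorem~\ref{deltatau} and Theorem~\ref{taufields}(i), the same $c\in k$ with $|c|=\delta_f(y)$ is used to rescale, and $c^{-1}\tau_f$ reduces in the fiber to $c^{-1}\tau_{\calH(y)/\calH(x)}$; its reduction is $\tiltau_{\calH(y)/\calH(x)}$ by the definition of the scaled reduction applied to the field extension $\calH(y)/\calH(x)$. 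Here one uses that $\wHy=\tilK$ for $K=\calH(y)$, and that the reduction of $\omega_{\calH(y)/\calH(x)}$ is $\omega_{\wHy/\wHx}$ as computed in \S\ref{bivardifsec}; combined with $\tilomega_{f,C_y}=\omega^\rmlog_{\tilf_y}$ from Theorem~\ref{redomega}, the stalk at the generic point $y$ is $\omega_{\wHy/\wHx}$, so the two reductions live in the same space and agree.

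For (iii), I would apply Theorem~\ref{taufields}(ii) directly: $K=\calH(x)$ is of type 2, $L=\calH(y)$, the extension is separable since $f$ is generically \'etale, and $c\in k$ satisfies $|c^{-1}|=\delta_{L/K}=\delta_f(y)$. That theorem gives $\wt{c\tau_{L/K}}=(\tilphi)'\otimes\tilpsi\neq 0$ with $\phi\in\hatOmega_{\calH(x)}$ of norm $1$ and $\psi$ the image of $c\phi$ in $\hatOmega_{\calH(y)}$. The only bookkeeping point is that the scaling convention in \S\ref{reddifsec} uses $|c|=\delta_f(y)$ whereas Theorem~\ref{taufields}(ii) is phrased with $|c^{-1}|=\delta_{L/K}$, i.e.\ with the reciprocal; so I would either replace $c$ by $c^{-1}$ or observe that the statement of (iii) already writes $|c^{-1}|=\delta_f(y)$, keeping the conventions consistent. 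Combining with (ii) transports this identity from the field level to the curve $C_y$, giving the claimed tensor expression in $(\tilf_y^*\Omega_{\wHx/\tilk})'\otimes\Omega_{\wHy/\tilk}$.

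Assertion (i) is then the specialization of Corollary~\ref{slopebivar} (or directly Lemma~\ref{redlem}) to the bivariant form $\phi=\tau_f$ near a closed point $v\in C_y$: by Lemma~\ref{redlem}, $\slope_v(\|\tau_f\|)=-\ord_v(\wt{c^{-1}\tau_f})=-\ord_v(\tiltau_{f,y})$ where $\ord_v$ is the order attached to the lattice $\tilomega_{f,C_y}$. Since $\|\tau_f\|=\delta_f$ by Theorem~\ref{deltatau}, and since $\tilomega_{f,C_y}=\omega^\rmlog_{\tilf_y}$ by Theorem~\ref{redomega}, the order $\ord_v$ in that formula is the \emph{logarithmic} order $\logord_v$ with respect to the honest dualizing sheaf $\omega_{\tilf_y}$; so $\slope_v(\delta_f)=-\logord_v(\tiltau_{f,y})$, and the relation $\logord_v=\ord_v-e_v+1=\ord_v-n_v+1$ (the identity recorded for bivariant forms right after the definition of $\omega_f^\rmlog$, with $e_v=n_v$ the ramification index of $\tilf_y$ at $v$) gives the second equality $-\ord_v(\tiltau_{f,y})+n_v-1$. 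I expect no real obstacle here; the only thing to be careful about is the consistent interpretation of $\ord_v$ versus $\logord_v$ as induced by $\Omega^\di$ versus $\Omega^{\rmlog,\di}$, which is exactly the content of Theorem~\ref{redomega}, and keeping the two scaling conventions ($|c|$ vs.\ $|c^{-1}|$) straight between \S\ref{reddifsec} and Theorem~\ref{taufields}(ii).
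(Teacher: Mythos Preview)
Your proposal is correct and follows essentially the same route as the paper's own proof, which is a one-line citation: (i) from Theorem~\ref{redomega} and Lemma~\ref{redlem}, (ii) from Lemma~\ref{fiberlem}, and (iii) from (ii) together with Theorem~\ref{taufields}(ii). You have simply unpacked these references with the appropriate care about the $c$ versus $c^{-1}$ scaling conventions and the identification of the lattice-induced order of Lemma~\ref{redlem} with $\logord_v$ via Theorem~\ref{redomega}.
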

\begin{proof}
Indeed, (i) follows from Theorem~\ref{redomega} and Lemma~\ref{redlem}, (ii) follows from Lemma~\ref{fiberlem}, and (iii) follows from (ii) and Theorem~\ref{taufields}(ii).
\end{proof}

\subsubsection{The tame case}
Now, let us compute $\tiltau_{f,y}$ more specifically. In the tame case, the trace form is compatible with the reduction:

\begin{lem}\label{tautamelem}
Keep the above notation. If $f$ is residually tame at $y$ then $\delta_f(y)=1$ and $\tiltau_{f,y}=\tau_{\tilf_y}$.
\end{lem}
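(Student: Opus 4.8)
The plan is to reduce everything to the local statement at the completed residue fields, exactly as in the proofs of Theorems~\ref{deltatau} and \ref{redomega}. By Lemma~\ref{tiltaulem}(ii), $\tiltau_{f,y}=\tiltau_{\calH(y)/\calH(x)}$, and by the definition of residual tameness the extension $\calH(y)/\calH(x)$ is tame; in particular $\delta_{\calH(y)/\calH(x)}=1$, which by Theorem~\ref{deltatau} (or directly by the definition of $\delta_f$) gives $\delta_f(y)=1$. Thus we may take $c=1$ in the definition of the scaled reduction, so that $\tiltau_{f,y}=\wt{\tau_f}$ is literally the reduction of the trace bivariant form, computed via the unit ball $\omega_f^\di$ and Corollary~\ref{omegaball}.

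Next I would unwind the reduction. Choose a tame monomial parameter $t\in\calH(x)$ with $|t|=1$, so that $dt$ generates $\hatOmega^\di_{\calH(x)/\kcirc}$ and reduces to $d\tilt$, a generator of $\Omega_{\wHx/\tilk}$; since $f$ is residually tame, $t$ remains a tame monomial parameter of $\calH(y)$ (the induced extension of residue curves $\tilf_y$ is generically étale), so the pullback $\psi=f^*(dt)$ has norm $1$ and reduces to $\tilf_y^*(d\tilt)$, a generator of $\Omega_{\wHy/\tilk}$. By Lemma~\ref{tiltaulem}(iii) with $\phi=dt$ and $c=1$, we get $\tiltau_{f,y}=(\wt{dt})'\otimes\wt{f^*(dt)}=(d\tilt)'\otimes\tilf_y^*(d\tilt)$. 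On the other hand, $\tau_{\tilf_y}$ is by definition the bivariant form sending $d\tilt$ to $\tilf_y^*(d\tilt)$, i.e. $\tau_{\tilf_y}=(d\tilt)'\otimes\tilf_y^*(d\tilt)$ in tensor notation. Hence the two agree.

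The only point requiring care — and the step I expect to be the main obstacle — is verifying that the reduction of $f^*(dt)$ really is $\tilf_y^*(d\tilt)$, i.e. that pullback of differential forms commutes with reduction in the residually tame situation. This is where tameness is essential: one must check that $\tilf_y$ is generically étale so that $d\tilt$ is a genuine (nonzero) generator of $\Omega_{\wHy/\tilk}$ over the generic point, and that no defect or inseparability spoils the identification $\Omega_{\wHx/\tilk}\otimes_{\wHx}\wHy\toisom\Omega_{\wHy/\tilk}$. Concretely, residual tameness means the ramification indices $e_v$ of $\tilf_y$ at closed points $v\in C_y$ are prime to $p$ and the residue extension is separable, so the reduction map carries the isometry $f^*\hatOmega^\di_{\calH(x)}\toisom\hatOmega^\di_{\calH(y)}$ (which exists because $\delta_f(y)=1$) to the isomorphism $\tilf_y^*\Omega_{\wHx/\tilk}\toisom\Omega_{\wHy/\tilk}$ of the non-logarithmic lattices; intersecting with $\tilomega_{f,C_y}=\omega^\rmlog_{\tilf_y}$ from Theorem~\ref{redomega} then identifies $\wt{\tau_f}$ with $\tau_{\tilf_y}$ viewed inside $\omega^\rmlog_{\tilf_y}$. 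I would present this as a short computation, citing \cite[Theorem~4.3.3]{CTT} for the behaviour of $\Omega^\di$ on annuli and Lemma~\ref{tiltaulem} for the rest, rather than redoing the local analysis from scratch.
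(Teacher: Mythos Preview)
Your proof is correct and follows essentially the same route as the paper: cite $\delta_f(y)=1$ (the paper uses Theorem~\ref{slopeth}(ii)), take $c=1$, invoke Lemma~\ref{tiltaulem}(iii) with $\phi$ of norm $1$, and identify the reduction with $\tau_{\tilf_y}$. The one remark worth making is that what you flag as the ``main obstacle'' --- that the reduction of $f^*(dt)$ equals $\tilf_y^*(d\tilt)$ --- is in fact immediate from the description of the reduction map in \S\ref{redsec} (the reduction of $\hatd_K x$ is $d_{\tilK}\tilx$, applied with $K=\calH(y)$ and $x=t$), so the appeal to \cite[Theorem~4.3.3]{CTT} and the discussion of defect and lattices is unnecessary; tameness enters only to guarantee $\delta_f(y)=1$ and that $\tilf_y$ is generically \'etale.
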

\begin{proof}
Since $\delta_f(y)=1$ by Theorem~\ref{slopeth}(ii), the reduction does not involve any rescaling. By Lemma~\ref{tiltaulem}(iii), locally at $y$ we can present $\tau_f$ as $\phi^{-1}\otimes f^*(\phi)$, where $\|\phi\|_x=1$. Hence $\tiltau_{f,y}=\tilphi^{-1}\otimes \tilf_y^*(\tilphi)$ is the trace form $\tau_{\tilf_y}$ of the generically \'etale morphism $\tilf_y$.
\end{proof}

Note that the lemma conceptually explains why the assertion of Theorem~\ref{maindif}(2) reduces to the RH formula for $\tilf_y$ in the residually tame case.

\subsubsection{The degree-$p$ case}
Next we study the simplest residually wild case, the case of degree $p$. 

\begin{theor}\label{degpth}
Let $L/K$ be a separable wildly ramified extension of degree $p$ of one-dimensional analytic $k$-fields of type 2. Choose a tame monomial parameter $z\in K$ such that $|z|=1$ and set $\tilw=\tilz^{1/p}\in\wHy$. Let $z_L$ denote $z$ viewed as an element of $L$, and let $\tilt$ be the tame reduction of $z_L$. Moreover, if $z_L$ is a mixed parameter then choose $\tilt$ to be the canonical tame reduction. Then $\tiltau_{L/K}$ is equal to:
\begin{itemize}
\item[(i)] $(d\tilz)'\otimes d\tilt$ if $z$ is a pure parameter in $\calH(y)$,

\item[(ii)] $(d\tilz)'\otimes(d\tilt+\tilw^{p-1}d\tilw)$ otherwise.
\end{itemize}
In addition, case (ii) takes place if and only if $\delta_{L/K}=|p|$, in particular, $k$ is of mixed characteristic.
\end{theor}
\begin{proof}
Fix $c\in k^\times$ such that $|c|=\delta_{L/K}^{-1}$. Furthermore, take $c=p^{-1}$ if $\delta_f(y)=|p|$. We will prove the theorem by applying Lemma~\ref{tiltaulem}(iii) to the form $\phi=dz$. Since $z$ is a tame monomial parameter at $x$ we have that $\|\phi\|_x=1$ and $0\neq\tilphi=d\tilz$. Let $\psi\in\hatOmega_L$ be the image of $c\phi$, then in view of Lemma~\ref{tiltaulem}(iii) we only need to prove that $\tilpsi$ equals $d\tilt$ or $d\tilt+\tilw^{p-1}d\tilw$, respectively.

Fix a presentation $z_L=w^p+t_1$ in $L$, where $t_1$ is the tame term if $z_L$ is a parameter, and $|t_1|<|p|$ otherwise. This fits the notation of the theorem since $\tilz_L^{1/p}$ is the reduction of $w$. Note that $\psi=cdz_L=cpw^{p-1}dw+cdt_1$ and $\|dt_1\|_y=|t_1|_y$. Since $\tilz\notin\wHx^p$ we have that $\tilw\notin\wHy^p$ and hence $w$ is a tame monomial parameter at $y$. In particular, $\|dw\|_y=|w|_y=1$ and hence $\|pw^{p-1}dw\|_y=|p|$. If $z$ is a pure parameter then $|t_1|_y>|p|$ and hence $|c^{-1}|=\delta_f(y)=|t_1|_y>|p|$. Setting $t=ct_1$ we obtain that $\wt{\psi}=\wt{cdt_1}=\wt{dt}=d\tilt$, and clearly $\tilt$ is a tame reduction of $z_L$.

If $z$ is not a pure parameter then $|t_1|_y\le|p|$ and hence $\delta_f(y)=\|c^{-1}\psi\|_y\le|p|$. By Theorem~\ref{slopeth}(ii) we must have $\delta_f(y)=|p|$. Thus $c=p^{-1}$ and ${\psi}={w^{p-1}dw+dt}$ where $t=p^{-1}t_1$. In particular, $\wt{\psi}=d\tilt+\tilw^{p-1}d\tilw$ and $\tilt$ is the canonical tame reduction of $z_L$ in this case.
\end{proof}

\subsubsection{Mixed bivariant forms}
Theorem~\ref{degpth} motivates the following definition. Let $F/E$ be a finite extension of fields of characteristic $p$ and assume that $[E:E^p]=p$. In particular, $\Omega_E$ and $\Omega_F$ are one-dimensional. A bivariant form $(dx)'\otimes dt\in\omega_{F/E}=(\Omega_E)'\otimes_E\Omega_F$ will be called {\em exact}. If $E=F^p$ then by a {\em mixed} bivariant form we mean any $\phi\in\omega_{F/E}$ of the form $(d_Ew^p)'\otimes(d_Ft+w^{p-1}d_Fw)$, where $t\in F$ and $w\in F\setminus E$. We have proved in Theorem~\ref{degpth} that the reduction of a trace form is always exact or mixed. The converse also holds:

\begin{theor}\label{type2fields}
Let $K$ be a one-dimensional $k$-field of type 2 and let $F=\tilK^{1/p}$ be the purely inseparable extension of $\tilK$ of degree $p$. Let $\phi\in\omega_{F/\tilK}$ be a non-zero bivariant form for $F/\tilK$, and let $\delta_0\in|k^\times|$. Assume that either $|p|<\delta_0<1$ and $\phi$ is exact, or $\delta_0=|p|$ and $\phi$ is mixed. Then there exists a wildly ramified extension $L/K$ of degree $p$ such that $\delta_{L/K}=\delta_0$ and $\phi=\tiltau_{L/K}$ under the identification $\tilL=F$.
\end{theor}
\begin{proof}
Fix any field $L$ of type 2 with $\tilL=F$. For example, one can take $L$ to be any wildly ramified extension of $K$ of degree $p$. (In fact, it is easy to see that the $k$-isomorphism class of $L$ is determined by $\tilL$, so the choice is not essential here.) We will construct an embedding $K\into L$ satisfying assertions of the lemma.

By definition, either $\phi=(d\tilz)'\otimes d\tilt$ or $\phi=(d\tilz)'\otimes (d\tilt+\tilw^{p-1}d\tilw)$, where $\tilz\in\tilK\setminus\tilK^p$, $\tilt\in\tilL\setminus\tilL^p$, and $\tilw=\tilz^{1/p}$. Choose liftings $z\in K$ and $t,w\in L$, and note that they are tame monomial parameters by Lemma~\ref{tameparamlem}. In particular, $K$ is tame, and hence unramified, over $K_0=\wh{k(z)}$.

Fix $c\in k^\times$ such that $|c|=\delta_0$ and $c=p$ if $\delta_0=|p|$, and consider the element $v=w^p+ct$. Since $\tilv=\tilw^p$ is transcendental over $\tilk$, there is an isomorphism $K_0=\wh{k(v)}$ sending $z$ to $v$, and we obtain an embedding $K_0\into L$ whose reduction is the embedding $\tilK_0=\tilk(\tilz)\into\tilL$. Since $K/K_0$ is unramified and $\tilK$ is a separable subextension of $\tilL/\tilK_0$, the embedding $K_0\into L$ factors through an embedding $i\:K\into L$. If we identify $K$ with the image of $i$, then $z=w^p+ct$, and by Theorem~\ref{degpth} we obtain that $\tiltau_{L/K}=\phi$, as required.
\end{proof}

\begin{rem}\label{sloperem2}
(i) Mixed forms are combined from an exact part and a part $\lam=(d_Ew^p)'\otimes w^{p-1}d_Fw$, which reveals a logarithmic behaviour. It can be identified with the section $(\frac{dw}{w})^{\otimes(1-p)}$ of $\Omega_F^{\otimes(1-p)}$. In particular, all its zeros are of order $p-1$, and all poles are of order divisible by $p-1$.

(ii) Assume that $n_f(y)=p$. Since any exact/mixed form can be obtained as the reduction of $\tau_{f,y}$, it is possible to deduce all properties of the different from the special form of $\tau_{f,y}$. For example, let us explain the properties from Remark~\ref{sloperem}. If $|p|<\delta_f(y)<1$ then $\tiltau_{f,y}$ is exact. It follows that $\ord_v\tiltau_{f,y}\notin -1+p\bfZ$ for any $v\in C_y$, hence $\slope_v\delta_f\notin p\bfZ$ by Lemma~\ref{tiltaulem}(i). If $\delta_f(y)=|p|$ then $\phi=\phi_0+\lam$ is mixed, and one can check that its order is bounded by $p-1$ (in the extreme case the zero comes from $\lam$ and $\phi_0$ vanishes to a higher order). In particular, the slopes of $\delta_f$ at $y$ are non-negative, and hence $|p|$ is the minimal possible value of $\delta_f$.
\end{rem}

\section{\'Etale annular $p$-covers}\label{annulisec}
In this section we study wild covers of annuli of degree $p$.

\subsection{Annular covers}

\subsubsection{Annuli, skeletons and coordinates}\label{anncoordsec}
Let $Y=\calM(\calA)$ be a closed annulus of exponential modulus $r\in(0,1)$. The boundary $\partial(Y)=\{a,b\}$ consists of two points and the interval $l_Y=[a,b]$ is the minimal skeleton of $Y$. We will call $l=l_Y$ {\em the skeleton} of $Y$. An ordering $(a,b)$ of the boundary will be called an {\em orientation} on $Y$. Orienting $Y$ is equivalent to orienting the skeleton. For any real-valued function $\phi\:Y\to\bfR$ we will denote by $\phi_l\:l\to\bfR$ its restriction on $l$. For example, any $f\in \calA$ induces a function $|f|_l\:l\to\bfR_{\ge 0}$.

By a {\em monic coordinate} on $Y$ we mean any element $y\in\calA$ inducing on isomorphism of $Y$ onto the standard annulus $A(r,1)$ of radii $r\le 1$ and 1. Equivalently, $\calA\simeq k\{y,ry^{-1}\}$. Any such coordinate induces an isomorphism of $|k^\times|$-pm spaces $l\simeq [r,1]$. For an oriented annulus $Y$ we will only consider monic coordinates inducing an oriented isomorphism $l\simeq [r,1]$.

For completeness, we will also allow the case of exponential modulus $1$. Then $\calA=k\{y,y^{-1}\}$, the boundary consists of a single point $q$, and orienting $Y$ is equivalent to ordering the set of two infinite points of $C_q=\Spec(\tilk[\tily^{\pm 1}])=\bfG_{m,\tilk}$.

\subsubsection{Units and domination}
Given elements $u,v\in\calA$ with $u$ a unit, we say that $u$ {\em strictly dominates} $v$ and write $v\prec u$ if $|v/u|_\calA<1$, where $|\ |_\calA$ denotes the spectral norm on $\calA$. This happens if and only if $|v/u|_q<1$ for any $q\in Y$ if and only if $|v/u|_q< 1$ for any $q\in\partial(Y)$. In particular, $v\prec u$ if and only if $|v|_l<|u|_l$ as functions on $l$.

Fix a monic coordinate $y$. It is easy to see that an element $u$ is a unit if and only if the series $u=\sum_{i\in\bfZ}a_iy^i$ contains a {\em dominant term} $a_ny^n$, i.e. a term that strictly dominates all other terms. (This corrects the inaccurate formulation in \cite[Lemma~3.5.8(i)]{CTT}, where the spectral norm was used instead of domination.) In the sequel, we will need the following computation:

\begin{lem}\label{domlem}
Assume that $u,z\in\calA$ are such that $u$ is a unit strictly dominating $z$, and $i$ is an integer. Then $(u+z)^i-u^i=u^{i-1}zQ$, where $|Q|_\calA\le 1$.
\end{lem}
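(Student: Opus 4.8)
The plan is to reduce the claim to an elementary computation with the binomial expansion. Write $v = uw$ where $w = z/u$, so that the hypothesis $z \prec u$ becomes $|w|_\calA < 1$; in particular $w \in \calA$ since $u$ is a unit and we may divide by it. Then $(u+z)^i - u^i = u^i((1+w)^i - 1)$, and the whole problem becomes showing that $(1+w)^i - 1 = w\, Q$ for some $Q \in \calA$ with $|Q|_\calA \le 1$, after which we set the original $Q$ to be this one (multiplied by $u^{i-1}/u^{i-1}=1$, i.e. absorbing the remaining $u^{i-1}$ correctly: $u^i((1+w)^i-1) = u^i w Q = u^{i-1}(uw)Q = u^{i-1} z Q$).

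For $i \ge 0$ this is immediate: $(1+w)^i - 1 = \sum_{j=1}^{i}\binom{i}{j} w^j = w \sum_{j=1}^{i}\binom{i}{j} w^{j-1}$, and since the binomial coefficients are integers they have norm $\le 1$ in $\calA$ (as $k^\circ \subset \calA^\circ$), while $|w|_\calA < 1$, so the bracketed sum $Q$ satisfies $|Q|_\calA \le 1$ by the ultrametric inequality. For negative $i$, write $i = -m$ with $m > 0$ and use $(1+w)^{-m} = \sum_{j \ge 0}\binom{-m}{j} w^j$, a convergent series in $\calA$ because $|w|_\calA < 1$ and $\binom{-m}{j} \in \bfZ$; again $(1+w)^{-m} - 1 = w\sum_{j\ge 1}\binom{-m}{j}w^{j-1}$, and the sum $Q$ converges with $|Q|_\calA \le \sup_{j \ge 1}|w|_\calA^{j-1} \le 1$. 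In both cases one concludes $(u+z)^i - u^i = u^i w Q = u^{i-1} z Q$ with $|Q|_\calA \le 1$, as desired.

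The only point requiring a word of care — and the step I would flag as the mild obstacle — is the convergence of the negative-exponent binomial series and the justification that $(1+w)$ is invertible in $\calA$ with the claimed expansion. This follows from the general fact that for $w$ in a Banach $k$-algebra with $|w| < 1$ the element $1+w$ is a unit and its inverse powers are given by the usual binomial series (geometric-series argument, or: $\calA^\circ$ is $\bfZ$-flat enough that the formal identity $(1+w)^m \cdot \sum_{j}\binom{-m}{j}w^j = 1$ holds termwise and converges). Since $\calA = k\{y, ry^{-1}\}$ is a Tate algebra, all of this is standard. No further structure of the annulus is needed; the statement is really an identity about units in an affinoid algebra.
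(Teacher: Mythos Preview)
Your proof is correct and follows essentially the same approach as the paper: reduce to $u=1$ by dividing through, then observe that $(1+w)^i-1$ has an expansion $\sum_{j\ge 1} c_j w^j$ with integer coefficients $c_j$, so that $Q=\sum_{j\ge 1} c_j w^{j-1}$ satisfies $|Q|_\calA\le 1$. The only cosmetic difference is in the negative-exponent case: the paper writes $(1+z)^i=(1-z+z^2-\dots)^{-i}$ and expands the product, while you invoke the generalized binomial coefficients $\binom{-m}{j}\in\bfZ$ directly; both arguments produce the same integer-coefficient series.
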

\begin{proof}
Dividing by $u^i$ we can assume that $u=1$. It suffices to show that $zQ=(1+z)^i-1=\sum_{l=1}^\infty a_lz^l$ with $a_l\in\bfZ$. If $i\ge 0$ this is obvious, and for $i<0$ the claim follows by expanding the right hand side of $(1+z)^i=(1-z+z^2-\dots)^{-i}$.
\end{proof}

\subsubsection{Annular covers}\label{anncovsec}
By an {\em annular $m$-cover} we mean a finite morphism $f\:Y\to X$ of degree $m$ between annuli. Using monic coordinates $y$ and $x$ on $Y$ and $X$ it is described by a series $x=\varphi(y)=\sum_{i\in\bfZ}c_iy^i$, that will be called a {\em presentation} of $f$. Since $x$ is a unit, there is a dominant term $c_dy^d$. In particular, $|f|_l=|c_dy^d|_l$ is a monomial function of slope $d$ on $l$, and $r(X)=r(Y)^m$. Moreover, $|c_d|=1$ and replacing $x$ by $c_d^{-1}x$ we can assume that $c_d=1$.

Furthermore, the absolute value of $d$ equals $m$, and the sign of $d$ indicates whether $f$ is compatible with the orientations induced by the coordinates. In particular, choosing compatible orientations we can and always will assume that $d=m$. A presentation with dominant term $y^m$ will be called a {\em monic presentation}.

\subsubsection{Kummer and binomial covers}\label{Kumsec}
Let $X=\calM(k\{x,rx^{-1}\})$ be a closed annulus with a fixed monic coordinate $x$, and let $g\:Y\to X$ be a residually tame annular covering. Then $m:=\deg(g)\notin p\bfZ$ and, using that the radius of convergence of $(1+t)^{1/m}$ is 1, one easily obtains that $y=x^{1/m}$ is a monic coordinate on $Y$. In particular, $Y$ is $X$-isomorphic to the {\em Kummer covering} of degree $m$, i.e. the annular covering $\calM(k\{y,r^{\frac{1}{m}}y^{-1}\})\to X$.\footnote{In fact, any tame \'etale covering of $X$ is Kummer by \cite[Theorem~6.3.5]{berihes}. The difficult part is to prove that the cover is annular.}

Kummer covers are given by monomials. By a {\em standard binomial} $m$-cover we mean a cover $A(r,1)\to A(r^m,1)$ given by $y\mapsto y^m+c_ny^n$. A {\em binomial cover} is an annular cover $f\:Y\to X$ isomorphic to a standard one. In other words, $f$ admits a monic presentation of the form $\varphi(y)=y^m+c_ny^n$.

\begin{rem}
(i) An isomorphism of morphisms is given by compatible isomorphisms between targets and sources. In our situation, this amounts to choosing coordinates both on $Y$ and $X$. Our main result about \'etale annular $p$-covers will be that it is either Kummer or binomial.

(ii) Unlike the tame case, it is crucial to play with both coordinates. In particular, the above result is wrong once a coordinate on $X$ is fixed. The set of $X$-isomorphism classes of \'etale annular $p$-covers of $X$ is huge.
\end{rem}

\subsubsection{\'Etale annular covers and the different}\label{etalecovsec}
Let $f\:Y\to X$ be an annular $m$-cover with a monic presentation $x=\varphi(y)=\sum_{i\in\bfZ}c_iy^i$. Then $f$ is \'etale if and only if the derivative $\varphi'(y)=\sum_{i\in\bfZ}ic_iy^{i-1}$ is a unit, that is, there is a dominant term $nc_ny^{n-1}$. In the tame case, $m\notin p\bfZ$ and one automatically has that $n=m$.

By \cite[Theorem~4.1.6]{CTT}, the restriction $\delta_l$ of $\delta$ onto the skeleton $l=l_Y$ coincides with the restriction of $|yx^{-1}\varphi'|$ onto $l$. Since $|x|=|y^m|$ on $l$ we obtain that $\delta_l$ coincides with the norm of the unit $y^{1-m}\varphi'(y)=\sum_{i\in\bfZ} ic_iy^{i-m}$ and hence coincides with the norm of the dominant term $nc_ny^{n-m}$. In particular, $n-m$ is the slope of the different on $l$.

\subsection{Metrization of $\Aut(A(r,1))$}

\subsubsection{The group of automorphisms}
Consider a standard annulus $A(r,1)=\calM(\calA)$, where $\calA=k\{y,ry^{-1}\}$, and let $G=G(r)$ be its group of automorphisms. Once the coordinate is fixed, we can identify elements $\phi\in G$ with their presentations $\varphi(y)$, and we will not distinguish them. This identifies $G$ with the set of series $\phi=\sum_{i\in\bfZ}a_iy^i$ possessing a dominant term $a_ny^n$ with $n\in\{\pm 1\}$ and $|a_ny^n|_\calA=1$ (i.e. either $n=1, |a_1|=1$ or $n=-1,|a_{-1}|=r$), and the operation corresponds to the composition.

\subsubsection{Composition}
In fact, one can compose automorphisms $\phi$ with arbitrary elements $g=\sum_{i\in\bfZ}g_iy^i$. Namely, $g(\phi(y))=\sum_ig_i\phi(y)^i$ is a well defined element of $\calA$. Even more generally, the composition $g\circ h$ is defined when $h\in\calA$ has a dominant term $h_ny^n$ with $n\neq 0$ and $|h_ny^n|_\calA=1$, and hence can be viewed as a morphism $A(r,1)\to A(r^{|n|},1)=\calM(\calB)$ for $\calB=k\{x,r^{|n|}x^{-1}\}$, and $g(x)\in\calB$ is a function on $A(r^{|n|},1)$.

\subsubsection{The metric}
The group $G_+$ of orientation preserving automorphisms is given by $n=1$. On this group we introduce a metric by setting $\|\phi\|=|y^{-1}\phi-1|_\calA$. Note that $\phi=a_1y+\lam y$, where $a_1\in k$, $|a_1|=1$ and $|\lam|_\calA<1$, and the value of $\|\phi\|\in[0,1]$ measures how far $\phi$ is from the identity automorphism $y$. The subset of $G_+$ defined by $\|\phi\|<1$ will be denoted $\Gcirccirc$, it is characterized by the inequality $|a_1-1|<1$. Finally, we naturally extend the metric to $G$ by setting $\|\phi\|=1$ for any $\phi\in G\setminus G_+$.

\begin{lem}\label{autlem}
(i) The set $\Gcirccirc$ is a normal subgroup of $G$, and mapping $\phi=\sum_i c_iy^i$ to $\tilc_1$ induces an isomorphism $G_+/\Gcirccirc\toisom\tilk^\times$.

(ii) Let $\phi\in \Gcirccirc$, $h\in\calA$, and $u\in\calA^\times$. Then $|u^{-1}(h\circ\phi-h)|_\calA\le|u^{-1}h|_\calA\cdot\|\phi\|$.

(iii) Sending $\phi$ to $\alp(\phi):=y^{-1}\phi-1$ establishes an isometric bijection $\Gcirccirc\to\calA^\circcirc$, which is an approximation to a homomorphism in the following sense: for any $\phi,\psi\in\Gcirccirc$ one has that $$|\alp(\psi\circ\phi)-\alp(\phi)-\alp(\psi)|_\calA\le|\alp(\phi)|_\calA\cdot|\alp(\psi)|_\calA.$$
\end{lem}
\begin{proof}
(i) Consider the restriction homomorphism $\rho\:G\to\Aut(\tilcalA)$. Note that $\tilcalA=\tilk[\tily,\tilz]/(\tily\tilz)$, where $z=ay^{-1}$ with $a\in k$, $|a|=r$. A direct computation of $\tilg:=\rho(g)$ yields: $\tilg(\tily)=\tilc_1\tily$ and $\tilg(\tilz)=\tilc_1^{-1}\tilz$ for any $g\in G_+$, and $\tilg(\tily)=\tilb\tilz$ and $\tilg(\tilz)=\tilb\tily$ , where $b=c_{-1}/a$, for any $g\in G\setminus G_+$. In particular, $\Gcirccirc=\Ker(\rho)$ and the claim follows.

(ii) If $ay^n$ is the dominant term of $u$ then $u=ay^nw$, where $w$ is a unit such that $|w|_\calA=|w^{-1}|_\calA=1$. Therefore we can replace $u$ by $y^n$ in the assertion. Let $h=\sum_i c_iy^i$. Since $|y^{-n}h|_\calA=\max_i|c_iy^{i-n}|_\calA$, it suffices to prove that for any $i$ $$|y^{-n}c_i(\phi^i-y^i)|_\calA\le|c_iy^{i-n}|_\calA\cdot\|\phi\|.$$ We have that $\phi=y(1+v(y))$, where $v=\alp(\phi)$ and $|v|_\calA=\|\phi\|$. Hence $$|y^{-n}c_i(\phi^i-y^i)|_\calA=|c_iy^{i-n}((1+v)^i-1)|_\calA\le|c_iy^{i-n}|_\calA\cdot|(1+v)^i-1|_\calA,$$ and it remains to note that $|(1+v)^i-1|_\calA\le|v|_\calA$ by Lemma~\ref{domlem}.

(iii) Clearly, $\Gcirccirc\to\calA^\circcirc$ is a bijection, and it is an isometry by the definition of $\|\ \|$. Set $h=\psi-y$ and note that $h\prec y$. We have that $\psi\circ\phi-\psi-\phi=h\circ\phi-h-y$ and hence $\alp(\psi\circ\phi)-\alp(\phi)-\alp(\psi)=y^{-1}(h\circ\phi-h)$. It remains to note that $|y^{-1}h|=\|\psi\|$ and use (ii) with $u=y$.
\end{proof}

The lemma implies the following result.

\begin{cor}\label{invnormcor}
The function $\|\ \|$ makes $G$ a complete non-archimedean group with an open subgroup $\Gcirccirc$:

(i) $\|\ \|$ is symmetric: $\|\phi\|=\|\phi^{-1}\|$,

(ii) $\|\ \|$ is non-archimedean: $\|\phi\circ\psi\|\le\max(\|\phi\|,\|\psi\|)$.
\end{cor}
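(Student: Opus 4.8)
The plan is to deduce Corollary~\ref{invnormcor} directly from Lemma~\ref{autlem}(i), treating the cases $\phi,\psi\in\Gcirccirc$ and the complementary case separately. Throughout, write $\alp(\phi)=y^{-1}\phi-1$ and recall that $\|\phi\|=|\alp(\phi)|_\calA$ for $\phi\in G_+$ and $\|\phi\|=1$ for $\phi\in G\setminus G_+$. The key input is the approximate-homomorphism inequality $|\alp(\phi\circ\psi)-\alp(\phi)-\alp(\psi)|_\calA\le\|\phi\|\cdot\|\psi\|$, valid for all $\phi,\psi\in G_+$.

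For part~(ii), I would first dispose of the case where one of $\phi,\psi$ lies outside $G_+$: then $G_+\not\ni\phi\circ\psi$ as well (since $G_+$ has index $2$ in $G$), so $\|\phi\circ\psi\|=1=\max(\|\phi\|,\|\psi\|)$, or at worst $\le\max$, and the inequality is immediate. So assume $\phi,\psi\in G_+$. If both lie in $\Gcirccirc$, then $\|\phi\|,\|\psi\|<1$, and by the approximate homomorphism inequality together with the ultrametric inequality on $\calA$, $$\|\phi\circ\psi\|=|\alp(\phi\circ\psi)|_\calA\le\max\bigl(|\alp(\phi)|_\calA,\,|\alp(\psi)|_\calA,\,\|\phi\|\cdot\|\psi\|\bigr)=\max(\|\phi\|,\|\psi\|),$$ since $\|\phi\|\cdot\|\psi\|<\max(\|\phi\|,\|\psi\|)$. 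If $\phi,\psi\in G_+$ but at least one is not in $\Gcirccirc$, say $\|\phi\|=1$, then $\max(\|\phi\|,\|\psi\|)=1$ and there is nothing to prove since $\|\ \|\le 1$ everywhere on $G_+$ by construction. This establishes~(ii), and in particular shows $\Gcirccirc$ is closed under composition.

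For part~(i), I would argue that $\|\phi^{-1}\|\le\|\phi\|$; symmetry then follows by applying this to $\phi^{-1}$ as well. If $\phi\notin G_+$ there is nothing to do. If $\phi\in G_+$ then $\phi^{-1}\in G_+$, so if $\phi\notin\Gcirccirc$ then $\|\phi\|=1\ge\|\phi^{-1}\|$ trivially. Finally suppose $\phi\in\Gcirccirc$. Applying the approximate-homomorphism inequality to $\phi$ and $\phi^{-1}$ and using $\alp(\phi\circ\phi^{-1})=\alp(\mathrm{id})=0$, we get $|\alp(\phi)+\alp(\phi^{-1})|_\calA\le\|\phi\|\cdot\|\phi^{-1}\|$. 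If we knew $\phi^{-1}\in\Gcirccirc$, i.e. $\|\phi^{-1}\|<1$, this would force $\|\phi^{-1}\|=|\alp(\phi^{-1})|_\calA=|\alp(\phi)|_\calA=\|\phi\|$, giving equality. To see $\phi^{-1}\in\Gcirccirc$: writing $\phi=a_1y+(\text{lower})$ with $|a_1-1|<1$, the leading coefficient of $\phi^{-1}$ is $a_1^{-1}$, and $|a_1^{-1}-1|=|a_1-1|<1$, so indeed $\|\phi^{-1}\|<1$. This completes~(i).

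The last point to address is completeness of $(G,\|\ \|)$. Here I would use the isometric bijection $\alp\:\Gcirccirc\to\calA^{\circcirc}$ of Lemma~\ref{autlem}(i): a Cauchy sequence in $\Gcirccirc$ maps to a Cauchy sequence in the complete Banach $k$-algebra $\calA$ landing in the closed subset $\calA^{\circcirc}=\{v:|v|_\calA<1\}$, hence converges to some $v\in\calA^{\circcirc}$; then $\phi:=y(1+v)\in\Gcirccirc$ (it has dominant term $y$ with the right normalization, and it is invertible as an automorphism by~(ii) and the Banach inverse of $1+v$), and $\phi_n\to\phi$ isometrically. A general Cauchy sequence in $G$ is eventually confined to a single coset of $\Gcirccirc$ (distinct cosets are at distance $1$ apart in the two nontrivial pieces), and one reduces to the previous case by translating. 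The main obstacle, such as it is, is bookkeeping the coset structure of $G_+\subset G$ and $\Gcirccirc\subset G_+$ cleanly enough that the ``$=1$'' cases are handled without circularity; the genuine analytic content is entirely contained in Lemma~\ref{autlem}(i).
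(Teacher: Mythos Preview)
Your proof is correct and follows essentially the same route as the paper: both parts are deduced from the approximate-homomorphism inequality of Lemma~\ref{autlem}(i), and completeness comes from the isometric bijection $\alp\:\Gcirccirc\to\calA^{\circcirc}$. One minor simplification: in part~(i) your detour verifying $\phi^{-1}\in\Gcirccirc$ via the leading coefficient is unnecessary, since $|\alp(\phi^{-1})|_\calA\le 1$ holds for all $\phi\in G_+$, and then $|\alp(\phi)+\alp(\phi^{-1})|_\calA\le|\alp(\phi)|_\calA\cdot|\alp(\phi^{-1})|_\calA\le|\alp(\phi)|_\calA$ immediately gives $\|\phi^{-1}\|\le\|\phi\|$ by the ultrametric inequality—this is exactly how the paper argues.
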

\begin{proof}
(i) If $\phi\notin\Gcirccirc$ then $\|\phi\|=\|\phi^{-1}\|=1$, so assume that $\phi\in\Gcirccirc$. It suffices to show that $\|\phi^{-1}\|\le \|\phi\|$. Lemma~\ref{autlem}(iii) applied to $\phi$ and $\psi=\phi^{-1}$ yields $$|\alp(\phi)+\alp(\phi^{-1})|_\calA\le|\alp(\phi)|_\calA\cdot|\alp(\phi^{-1})|_\calA\le|\alp(\phi)|_\calA.$$ This implies that $\|\phi^{-1}\|=|\alp(\phi^{-1})|_\calA\le|\alp(\phi)|_\calA=\|\phi\|$, as required.

(ii) This is deduced from Lemma \ref{autlem}(iii) in a similar fashion.

Completeness of $\Gcirccirc$ is clear from Lemma \ref{autlem}. By definition, $\|\phi\|\le 1$ for any $\phi\in G$ and the inequality is strict if and only if $\phi\in\Gcirccirc$. So, $\Gcirccirc$ is an open and closed subgroup, and therefore $G$ is complete too.
\end{proof}

The following remark will not be used, so the reader can skip it.

\begin{rem}
(i) It is easy to see that the inequality of Lemma~\ref{autlem}(ii) extends to the whole $G_+$.

(ii) Informally speaking, $\Gcirccirc$ behaves as the maximal subgroup of $G$ with a pro-unipotent model over $\kcirc$.
\end{rem}

\subsection{Classification of \'etale annular $p$-covers}

\begin{notation}\label{fnot}
We now restrict to the case of $p$-covers. So, until the end of Section \ref{annulisec}, $f\:Y\to X$ denotes an annular $p$-cover. By $l=[r,1]$ we denote the skeleton of $Y$. We orient $Y$ so that the monomial function $\delta|_l$ is non-decreasing, and orient $X$ compatibly. By $x=\varphi(y)=\sum_{i\in\bfZ}c_iy^i$ we will always denote a monic presentation of $f$, and then $nc_ny^{n-1}$ will denote the dominant term of $\varphi'(y)$.
\end{notation}

\begin{lem}\label{tametermlem}
Keep the above notation. Then $n\ge p$ and one of the following possibilities holds:

(1) $n=p$ and $k$ is of mixed characteristic.

(2) $(n,p)=1$ and $1>|c_ny^{n-p}|_l>|p|$.
\end{lem}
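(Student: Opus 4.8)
The strategy is to combine the formula $\delta_l = |nc_ny^{n-m}|_l$ from \S\ref{etalecovsec} (with $m = p$ here) with the general dichotomy for $\tiltau_{f,y}$ established in Theorem~\ref{degpth}, applied at the generic point $y$ of $Y$ of type $2$. First I would observe that \'etaleness forces $\varphi'(y)$ to have a dominant term $nc_ny^{n-1}$; since $\varphi(y) = y^p + (\text{lower order})$, the term $py^{p-1}$ contributes to $\varphi'$, so the dominant term satisfies $|nc_ny^{n-1}|_\calA \geq |py^{p-1}|_\calA = |p|$. In characteristic $p$ this already gives $n \geq p$ is false in general --- rather, in equal characteristic $p$ the term $py^{p-1}$ vanishes, so the dominant term of $\varphi'$ must come from some $c_iy^{i-1}$ with $i$ necessarily coprime to $p$ (otherwise $ic_i = 0$), hence $n$ is prime to $p$ and the slope $n - p$ of $\delta_f$ on $l$ is nonzero (it cannot be $0$ since $f$ is wildly ramified, being a degree-$p$ cover in characteristic $p$); in mixed characteristic one has the extra possibility that $py^{p-1}$ itself is the dominant (or a co-dominant) term, which is case~(1).

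Now I would make this precise by invoking Theorem~\ref{degpth} at the point $y \in l$ corresponding to the Gauss point of $Y$ (so $\calH(y)/\calH(x)$ is a separable wildly ramified extension of degree $p$ of type-$2$ fields). That theorem says $\delta_{\calH(y)/\calH(x)}$ is either strictly between $|p|$ and $1$ (the ``pure parameter'' case, forcing equal characteristic to be excluded only if the value is not $|p|$ --- but in fact case (ii) of Theorem~\ref{degpth} happens iff $\delta = |p|$, in which case $k$ has mixed characteristic). Translating: if $\delta_f(y) = |p|$, then by Theorem~\ref{degpth}(ii) we are in mixed characteristic, and I would argue that the slope of $\delta_f$ at $y$ being computed from $|nc_ny^{n-p}|$ together with $\delta_f(y) = |p| < 1$ means the dominant term of $y^{1-p}\varphi'(y) = \sum_i ic_iy^{i-p}$ has norm $|p|$ at $y$; the term $i = p$ gives $pc_p y^0$ with $|pc_p| = |p|$ (as $c_p$ is a unit, since $y^p$ is the dominant term of $\varphi$), which shows $n = p$ is possible and is exactly case~(1). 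Conversely if $\delta_f(y) > |p|$, Theorem~\ref{degpth}(i) applies, $z_L$ is a pure parameter, and I would show this forces $(n,p) = 1$: if $p \mid n$ then $nc_n = 0$ in characteristic $p$, contradiction, while in mixed characteristic if $p \mid n$ the term $nc_ny^{n-1}$ has norm $\leq |p| \cdot |y^{n-1}|_\calA < |p|\cdot|y^{p-1}|_\calA$ when $n > p$ wait --- one must be careful, but the point is that $\delta_f(y) = |nc_ny^{n-p}|_y > |p|$ combined with the competing term $py^{p-1}$ (norm $|p|$ at $y$, since $|y| = 1$ there) forces $|nc_n| > |p|$, hence $|n| = 1$, i.e. $(n,p) = 1$; and $\delta_f(y) < 1$ holds because $f$ is wildly ramified so $\delta_f(y) \neq 1$ and $\delta_f(y) \leq 1$ always, giving $1 > |c_ny^{n-p}|_l > |p|$, which is case~(2).

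The \textbf{main obstacle} I anticipate is the careful bookkeeping of which term of $\varphi'(y)$ actually dominates at the Gauss point versus along the whole skeleton $l$, and ensuring the two competing sources --- the ``arithmetic'' term $py^{p-1}$ and the ``geometric'' term $nc_ny^{n-1}$ with $(n,p)=1$ --- are correctly compared in both the equal and mixed characteristic cases. One has to use that $c_p$ is a unit (forced by $y^p$ being the dominant term of $\varphi$ and $f$ being degree exactly $p$), that $|y|_y = 1$ at the Gauss point, and that $\delta_f$ is monomial on $l$ with the orientation chosen so it is non-decreasing, so that its value at the Gauss point $|y| = 1$ is the maximal value $\leq 1$ and hence equals $|c_ny^{n-p}|$ there (or $|p|$). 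I would also need to rule out $n < p$: if $1 \leq n < p$ then $n-p < 0$ so $|y^{n-p}|_l$ is decreasing, but also the term $py^{p-1}$ in $\varphi'$ has norm at least $|p|$ and degree $p-1 > n-1$, so for $nc_ny^{n-1}$ to dominate we would need $|nc_n y^{n-1}|_q > |py^{p-1}|_q$ at the point $q = a$ (left endpoint, $|y| = r$), i.e. $|nc_n|r^{n-1} > |p|r^{p-1}$, i.e. $|nc_n| > |p|r^{p-n}$; combined with the monic condition $c_p$ unit one derives a contradiction with the modulus relation, giving $n \geq p$. Assembling these comparisons into the clean statement $n \geq p$ together with the two alternatives is the crux; the rest is the translation dictionary provided by \S\ref{etalecovsec} and Theorem~\ref{degpth}.
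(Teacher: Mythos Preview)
Your proposal reaches for Theorem~\ref{degpth} and the reduction form $\tiltau_{f,y}$, but none of that machinery is needed here, and the paper does not use it. The lemma is a completely elementary statement about power series, and the paper's proof is three lines: since the orientation on $l$ was chosen in Notation~\ref{fnot} so that $\delta_l$ is non-decreasing, and $\delta_l = |nc_ny^{n-p}|$ by \S\ref{etalecovsec}, the slope $n-p$ must be $\ge 0$. Then, among the terms $ic_iy^{i-p}$ of $y^{1-p}\varphi'(y)$ with $p\mid i$, the constant term $pc_p = p$ dominates all others (because $|c_iy^{i-p}|_l < 1$ for $i\ne p$, the monomial $y^p$ being the dominant term of $\varphi$, and $|i|\le|p|$), so if $p\mid n$ then $n=p$ and necessarily $\cha(k)=0$. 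Otherwise $(n,p)=1$, and then $y^p\succ c_ny^n$ gives the upper bound while $nc_ny^{n-1}\succ py^{p-1}$ gives the lower bound.

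Your argument for $n\ge p$ has a genuine gap. You observe that $n<p$ would make $|y^{n-p}|_l$ decreasing, but you never invoke the orientation convention to finish; instead you try to derive a contradiction from $|nc_n|r^{n-1} > |p|r^{p-1}$. In equal characteristic this inequality reads $|nc_n|r^{n-1} > 0$, which is no contradiction at all. The clean fix is exactly the one-line observation you skipped: $\delta_l$ non-decreasing forces the exponent $n-p$ to be non-negative. Once you use that, the detour through Theorem~\ref{degpth} and the Gauss-point analysis becomes entirely superfluous.
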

\begin{proof}
By \S\ref{etalecovsec} the non-decreasing function $\delta_l$ equals $|nc_ny^{n-p}|_l$. Hence $n\ge p$. In the mixed characteristic case, the free term $p$ of $y^{1-p}\varphi'(y)$ dominates any term $ic_iy^{i-p}$ with $i\in p\bfZ$. Therefore, either (1) holds or $(n,p)=1$. In the second case, $y^p\succ c_ny^n$ and $c_n y^{n-1}\succ py^{p-1}$, hence $1>|c_ny^{n-p}|_l>|p|$.
\end{proof}

\subsubsection{The different}
For brevity, a monomial function $h=|ct^s|$ on an interval $I$ will be called {\em relevant} if one of the following two possibilities holds:

(1) $h(I)=|p|>0$,

(2) $h$ is increasing, $h(I)\subset(|p|,1)$, and $(s,p)=1$. 

\begin{lem}\label{relmonlem}
With Notation \ref{fnot}, $\delta_l$ is a relevant monomial on $l$.
\end{lem}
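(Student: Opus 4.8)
The plan is to combine Lemma~\ref{tametermlem} with the computation of $\delta_l$ from \S\ref{etalecovsec}. Recall from there that $\delta_l$ coincides with the norm of the dominant term $nc_ny^{n-p}$ of $y^{1-p}\varphi'(y)$, so $\delta_l=|nc_n y^{n-p}|_l$ is indeed a monomial function of slope $n-p$ on $l=[r,1]$. It remains to verify that this monomial is \emph{relevant} in the sense just defined, i.e. that it falls into one of the two listed cases.

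First I would dispose of case (1) of Lemma~\ref{tametermlem}: there $n=p$, so $\delta_l$ has slope $n-p=0$, i.e. it is a constant monomial. Its value is $|nc_ny^{n-p}|_l=|pc_p|_l=|p|\cdot|c_p|$. Since $\varphi(y)=y^p+\dots$ is monic and $c_py^p$ cannot dominate $y^p$ (both have the same slope $p$, and $y^p$ is the dominant term), we have $|c_p|\le 1$, so $\delta_l\le|p|$. On the other hand $\delta_f\ge|p|$ everywhere by Theorem~\ref{slopeth}(ii) (this is exactly the lower bound on the different recalled/used around Remark~\ref{sloperem2}), hence $\delta_l\equiv|p|>0$. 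This is precisely possibility (1) in the definition of ``relevant''. Here I am using that $k$ must be of mixed characteristic in this case, which is part of Lemma~\ref{tametermlem}(1), so $|p|>0$ genuinely holds.

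Next, in case (2) of Lemma~\ref{tametermlem} we have $(n,p)=1$ and $1>|c_ny^{n-p}|_l>|p|$. Then $\delta_l=|nc_ny^{n-p}|_l=|c_ny^{n-p}|_l$ since $|n|=1$ (as $(n,p)=1$ and $k$ has residue characteristic $p$), so the bounds give $\delta_l(l)\subset(|p|,1)$. Moreover $\delta_l$ has slope $n-p$ with $(n-p,p)=(n,p)=1$, and since $\delta_l$ is non-increasing nowhere — by our orientation choice in Notation~\ref{fnot} the monomial $\delta|_l$ is non-decreasing, so its slope $n-p$ is $\ge 0$, hence $n-p>0$ (it cannot be $0$ as $p\nmid n$) and $\delta_l$ is strictly increasing — this is exactly possibility (2) in the definition of ``relevant''.

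There is essentially no hard step here: the statement is a bookkeeping repackaging of Lemma~\ref{tametermlem} together with the already-established formula $\delta_l=|nc_ny^{n-p}|_l$ and the global lower bound $\delta_f\ge|p|$. The one point requiring a little care is checking, in case (1), that the constant value of $\delta_l$ is \emph{exactly} $|p|$ and not something strictly smaller; this is where the inequality $\delta_f\ge|p|$ is needed, and I would make sure to cite it (Theorem~\ref{slopeth}(ii)) rather than re-derive it. The rest is matching definitions and observing that $|n|=1$ whenever $p\nmid n$.
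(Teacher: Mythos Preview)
Your proof is correct and follows the paper's approach: combine the formula $\delta_l=|nc_ny^{n-p}|$ from \S\ref{etalecovsec} with the two cases of Lemma~\ref{tametermlem}. One simplification: in case~(1) the presentation is monic, so $c_p=1$ on the nose and $\delta_l=|p|$ directly; your detour through ``$|c_p|\le 1$'' and the global bound $\delta_f\ge|p|$ from Theorem~\ref{slopeth}(ii) is unnecessary (and the phrase ``$c_py^p$ cannot dominate $y^p$'' is confused, since these are the same term).
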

\begin{proof}
We observed in \S\ref{etalecovsec} that $\delta_l=|nc_ny^{n-p}|_l$. Hence the assertion follows from Lemma~\ref{tametermlem}.
\end{proof}

\subsubsection{Dominant tame term}\label{domtamesec}
We will later see that cases (1) and (2) in Lemma~\ref{tametermlem} correspond to Kummer and binomial covers, respectively. In case (2), we call $c_ny^n$ the {\em dominant tame term} of $\varphi$. It strictly dominates any other term $c_iy^i$ with $(i,p)=1$. In case (1), the dominant tame term is zero by definition. In this case, $py^{p}$ strictly dominates any term $c_iy^i$ with $(i,p)=1$.

In view of the following lemma, by a {\em dominant tame term} of $f$ we mean any monomial of the form $cc_ny^n$ with $c\in k$ and $|c|=1$.

\begin{lem}\label{tametermlem2}
Let $f\:Y\to X$ and $\varphi(y)$ be as in Notation \ref{fnot}, and let $t$ be the dominant tame term of $\varphi$. Then,

(i) Any other monic presentation of $f$ has a dominant tame term of the form $ct$, where $c\in k$ and $|c|=1$.

(ii) Conversely, for any $c\in k$ with $|c|=1$ there exists a presentation whose dominant tame term is $ct$.
\end{lem}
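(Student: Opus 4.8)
\textbf{Proof plan for Lemma~\ref{tametermlem2}.}

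The plan is to analyze how a monic presentation of $f$ can change. Since $f\:Y\to X$ is a $p$-cover of annuli, a monic presentation $x=\varphi(y)$ corresponds to a choice of monic coordinates $x$ on $X$ and $y$ on $Y$; changing the presentation amounts to post-composing $\varphi$ with an orientation-preserving automorphism $\psi_X$ of $X$ and pre-composing with an orientation-preserving automorphism $\psi_Y$ of $Y$, giving $\tilvarphi = \psi_X\circ\varphi\circ\psi_Y$. Both $\psi_X$ and $\psi_Y$ lie in the corresponding automorphism groups $G(r^p)$ and $G(r)$ studied in \S4.2; moreover, for $\tilvarphi$ to still be monic (dominant term $y^p$ with unit coefficient $1$), the leading coefficients must be normalized, so after rescaling we may write $\psi_Y(y)=ay+(\text{strictly dominated})$ with $|a|=1$ and $\psi_X(x)=a^{-p}x+(\text{strictly dominated})$.

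For part (i), I would track the dominant tame term $c_ny^n$ (the $\prec$-maximal term $c_iy^i$ with $(i,p)=1$) through these operations. Pre-composition: applying Lemma~\ref{domlem} termwise, $\varphi(\psi_Y(y))$ has the same set of exponents up to strictly dominated corrections, and the term of exponent $n$ coprime to $p$ becomes $a^n c_n y^n$ plus strictly dominated contributions — here the key point is that substituting $ay$ multiplies $c_iy^i$ by $a^i$, a unit, and the lower-order part of $\psi_Y$ only adds terms strictly dominated by $c_ny^n$ among the exponents coprime to $p$ (using Lemma~\ref{domlem} and the domination statements of \S\ref{domtamesec}). Post-composition by $\psi_X$: since $\psi_X(x)=a^{-p}x+(\text{dominated})$ and the dominated part of $\psi_X$ only feeds in terms of high power of $x$, hence of exponents divisible by $p$ after substitution (because $|x|=|y^p|$ on $l$ and powers $x^j$ contribute exponents $pj+\cdots$), it leaves the exponent-$n$ term multiplied by $a^{-p}$ and otherwise only perturbs it by strictly dominated terms. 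Combining, the new dominant tame term is $a^{n-p}c_n y^n$ — in case (2), $n$ is fixed by $\delta_l$ (its slope is $n-p$ by \S\ref{etalecovsec}), so only the coefficient changes by the unit $a^{n-p}$, which is the claimed form $ct$. In case (1) there is nothing to prove since the dominant tame term is $0$ by definition and remains $0$.

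For part (ii), I would run this in reverse: given $c\in k$ with $|c|=1$, choose $a\in k$ with $|a|=1$ and $a^{n-p}=c$ (possible since $k=k^a$ and $n\ne p$ in case (2)), then the pre-composition $y\mapsto ay$ together with the compensating post-composition $x\mapsto a^{-p}x$ produces a monic presentation whose dominant tame term is exactly $ct$. In case (1), any $c$ works trivially. The main obstacle I expect is bookkeeping: showing cleanly that neither the strictly dominated part of $\psi_Y$ nor that of $\psi_X$ can contribute a term of exponent coprime to $p$ that dominates $c_ny^n$ — this is exactly where Lemma~\ref{domlem}, Lemma~\ref{autlem}(ii), and the domination hierarchy of \S\ref{domtamesec} (the fact that $c_ny^n$ strictly dominates all other exponent-coprime-to-$p$ terms, and that $py^p$ or $y^p$ dominates them in the respective cases) must be assembled carefully. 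Once those domination inequalities are in place, the computation is routine.
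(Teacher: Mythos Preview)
Your plan for (ii) is exactly the paper's: the linear change $y'=c^{1/(n-p)}y$, $x'=c^{p/(n-p)}x$ (your $a$ with $a^{n-p}=c$) produces a monic presentation with dominant tame term $ct$.

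For (i) your approach is more laborious than necessary and has a genuine gap. The paper argues as follows: $\delta_l=|c_ny^{n-p}|$ is an invariant of $f$, and this single observation already determines both $|c_n|$ (evaluate at an endpoint) and, when $r<1$, the exponent $n$ (from the slope). So the dominant tame term is fixed up to a unit scalar with no coefficient-tracking needed. You invoke the same fact to fix $n$, but then launch an unnecessary direct computation of the new coefficient; once $n$ and $|c_n|$ are pinned down by $\delta_l$, there is nothing left to do. (Incidentally, your assertion that the new coefficient is \emph{exactly} $a^{n-p}c_n$ is not quite right: cross-terms such as the $y^{n}$-contribution of $(y^p\text{-term})\cdot(\text{lower part of }\psi_Y)$ perturb it by something of strictly smaller norm, so one only gets $a^{n-p}c_n+(\text{smaller})$, which is still a unit times $c_n$.)

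The real gap is the case $r=1$. Then $l$ is a single point, $\delta_l$ is just a number, and your sentence ``$n$ is fixed by $\delta_l$ (its slope is $n-p$)'' has no content. The paper treats this separately: at the unique boundary point $q$ one computes the reduction bivariant form $\tiltau_{f,q}=(d\tily^p)'\otimes d\tily^n$, and since $\tiltau_{f,q}$ is an invariant of $f$, the exponent $n$ is recovered from it. Your direct computational route could in principle also show that $n$ is preserved when $r=1$ (the bookkeeping with Lemma~\ref{domlem} and the multinomial argument for $z^p$ does go through), but you do not carry this out and instead rely on the slope argument that is unavailable here.
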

\begin{proof}
The case $\delta_l=|p|$ is obvious. In the sequel we assume that $\delta_l>|p|$, hence the dominant tame terms are non-zero. Then the presentation in (ii) is obtained by the coordinate change $x'=c^{p/(p-n)}x$ and $y'=c^{1/(p-n)}y$.

Let us prove (i). Note that $\delta_l$ is an invariant of $f$. Writing $t=c_ny^n$ we see that $\delta_l=|c_ny^{n-p}|$ hence $|c_n|$ is determined by $f$, and $n$ is determined by $f$ when $r<1$. If $r=1$ then a more refined argument is needed. For example, $\partial(Y)=\{q\}$ is a single point and $C_q=\Spec(\tilk[\tily^{\pm 1}])$. A direct computation shows that $\tiltau_{f,y}=(d\tily^p)'\otimes d\tily^n$, and this easily implies that $n$ is an invariant of $f$.
\end{proof}

\subsubsection{The main theorem}
The following result completely classifies isomorphism classes of \'etale annular $p$-covers. Its main part is that any such cover is either Kummer or binomial.

\begin{theor}\label{binomth}
Assume that $f\:Y\to X$ is an \'etale annular $p$-cover. Then $f$ admits a presentation $x=y^p+t$, where $t$ is a dominant tame term of $f$. Conversely, any Kummer or binomial presentation of $f$ is of this form.
\end{theor}

The proof of this theorem requires some computations and will occupy Section~\ref{proofsec}. In view of \ref{domtamesec} we immediately obtain the following

\begin{cor}\label{classcor}
Fix $r\in|k^\times|$ with $r<1$ and consider the set $C_r$ of isomorphism classes of \'etale annular $p$-covers $f\:Y\to X$ such that $r$ is the exponential modulus of $Y$. Then the correspondence $f\mapsto\delta_f|_l$ induces a bijection of $C_r$ onto the set of relevant monomials on $l=[r,1]$.
\end{cor}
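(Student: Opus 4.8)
The plan is to read off the corollary from Theorem~\ref{binomth}, together with the formula for the different on the skeleton from \S\ref{etalecovsec} and the bookkeeping of dominant tame terms from \S\ref{domtamesec}; once Theorem~\ref{binomth} is available, only a matching of discrete data remains. First I would check that the map is well defined with the asserted target: $\delta_f$ is an invariant of the morphism $f$, and the oriented skeleton $l$ of $Y$ is canonically $[r,1]$ as soon as the exponential modulus of $Y$ is $r$ (Notation~\ref{fnot}), so $f\mapsto\delta_f|_l$ descends to a map from $C_r$ to monomial functions on $[r,1]$, and by Lemma~\ref{relmonlem} its values are relevant monomials. It then remains to prove surjectivity and injectivity.

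For surjectivity, given a relevant monomial $h=|ct^s|$ on $l=[r,1]$ I would exhibit a cover realizing it. If $h$ is of type~(1), i.e.\ $h\equiv|p|$, take the Kummer $p$-cover $A(r,1)\to A(r^p,1)$ given by $\varphi(y)=y^p$; here $\varphi'(y)=py^{p-1}$, so in the notation of \S\ref{etalecovsec} one has $n=p$ and $\delta_l=|p|=h$. If $h$ is of type~(2), so $h$ is increasing with $h(l)\subset(|p|,1)$ and $(s,p)=1$, put $n=p+s$ (whence $(n,p)=1$) and choose $c_n\in k$ with $|c_n|=|c|$; then $\varphi(y)=y^p+c_ny^n$ is a monic presentation of a morphism $A(r,1)\to A(r^p,1)$ since $|c_ny^{n-p}|_l<1$, it is étale since $\varphi'(y)=py^{p-1}+nc_ny^{n-1}$ has strictly dominant term $nc_ny^{n-1}$ (because $|c_ny^{n-p}|_l>|p|$ and $|n|=1$), and by \S\ref{etalecovsec} we get $\delta_l=|nc_ny^{n-p}|=|c_n|t^s=h$. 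In both cases $Y$ has exponential modulus $r$, so the class of the constructed cover lies in $C_r$ and maps to $h$.

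For injectivity, suppose $f_1,f_2\in C_r$ satisfy $\delta_{f_1}|_l=\delta_{f_2}|_l$. By Theorem~\ref{binomth} each $f_i$ admits a presentation $x=y^p+t_i$ with $t_i=c^{(i)}y^{n_i}$ a dominant tame term of $f_i$. If the common value of $\delta|_l$ is $\equiv|p|$, then $t_1=t_2=0$ by \S\ref{domtamesec} and Lemma~\ref{tametermlem}(1), so both $f_i$ equal the Kummer $p$-cover and are isomorphic. Otherwise the common value is an increasing monomial $|c|t^s$, and by \S\ref{etalecovsec} we have $\delta_{f_i}|_l=|n_ic^{(i)}|\,t^{n_i-p}$; equality of these two monomials on the nondegenerate interval $l$ forces $n_i-p=s$ and $|n_ic^{(i)}|=|c|$, hence $n_1=n_2=p+s$ and, since $(n_i,p)=1$ gives $|n_i|=1$, also $|c^{(1)}|=|c^{(2)}|$. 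By Lemma~\ref{tametermlem2} the dominant tame terms of $f_i$ are exactly the monomials $ac^{(i)}y^{p+s}$ with $a\in k$, $|a|=1$; as $|c^{(1)}|=|c^{(2)}|$ these two sets coincide, so Lemma~\ref{tametermlem2}(ii) lets us adjust the coordinates on $f_2$ to arrange $t_2=t_1$. Then $f_1$ and $f_2$ share the monic presentation $x=y^p+t_1$, so they are isomorphic.

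The genuinely hard input is Theorem~\ref{binomth}, which is taken for granted here; within the corollary the only delicate point is the bookkeeping in the injectivity step, namely that a dominant tame term is well defined only up to multiplication by a unit of $k$, and that precisely this ambiguity is absorbed once the integer $n$ and the absolute value $|c_n|$ agree — and both of these are read off directly from $\delta|_l$ as its slope and its leading coefficient. The hypothesis $r<1$ enters exactly to make $l$ a nondegenerate interval, so that the slope, and hence $n$, is recoverable from $\delta|_l$; for $r=1$ one would have to replace this recovery by the finer argument via $\tiltau_{f,y}$ used in the proof of Lemma~\ref{tametermlem2}(i).
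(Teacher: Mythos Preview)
Your proof is correct and follows precisely the route the paper intends: the paper does not give an explicit argument for the corollary at all, it simply says it is an immediate consequence of Theorem~\ref{binomth} together with \S\ref{domtamesec}, and you have correctly spelled out exactly those details (well-definedness via Lemma~\ref{relmonlem}, surjectivity via explicit Kummer or binomial covers, and injectivity by reading $n$ and $|c_n|$ off from $\delta_l$ and then invoking Lemma~\ref{tametermlem2}). Your closing remark on the role of $r<1$ is also accurate and matches the paper's treatment of the $r=1$ case inside the proof of Lemma~\ref{tametermlem2}(i).
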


\subsection{Proof of Theorem \ref{binomth}}\label{proofsec}

\subsubsection{The general line}
In view or Lemma~\ref{tametermlem2}, it suffices to show that $f$ is either Kummer or binomial. Let $X=\calM(\calB)$ and $Y=\calM(\calA)$. Fix an initial monic presentation $x=\varphi_0(y)$ of $f$. Our aim is to change both coordinates making the presentation binomial, but it will be convenient to fix the isomorphisms
$\calB=k\{x,r^px^{-1}\}$ and $\calA=k\{y,ry^{-1}\}$ and identify the automorphisms of these algebras with power series. In this language we should find automorphisms $h(x)\in \calB$ and $g(y)\in\calA$ of $\calB$ and $\calA$, respectively, so that the new presentation $\varphi=h\circ\varphi_0\circ g$ is of the form $y^p+cy^n$. We will construct $h$ and $g$ via a converging series of iterations, $g=g_0\circ g_1\circ\dots$ and $h=\dots\circ h_1\circ h_0$ such that $g_i$ and $h_i$ tend to the identities. The product will then converge by Corollary~\ref{invnormcor}.

\subsubsection{Two-term decompositions}
Let $\varphi=\sum_{i\in\bfZ}c_iy^i$ be a monic presentation of $f$ with dominant tame term $t$. By a {\em two-term decomposition} of $\varphi$ we mean a decomposition $\varphi(y)=\psi(y^p)+\lam(y)$, where $\psi(y^p)=\sum_{i\in \bfZ}a_iy^{pi}$ and the following condition holds: (1) if $t=0$ then $\lam\prec py^p$, (2) if $t=c_ny^n\neq 0$ then $t$ is the dominant term of $\lam$. By \S\ref{domtamesec}, any presentation possesses the natural two-term decomposition obtained by separating the terms with $i$ divisible and non-divisible by $p$. However, it will be convenient to use other decompositions too. We say that the decomposition is {\em simple} if $\psi(y^p)=y^p$.

\begin{lem}\label{palplem}
For any element $\alp(y)\in\calA$ there exists a decomposition $\alp(y)^p=\alp_1(y^p)+p\alp_2(y)$ such that $|\alp_1(y^p)|_\calA\le|\alp(y)|_\calA^p$ and $|\alp_2(y)|_\calA\le|\alp(y)|_\calA^p$.
\end{lem}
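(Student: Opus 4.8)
The statement is a purely algebraic fact about the Tate algebra $\calA=k\{y,ry^{-1}\}$, so I would forget the geometry and argue directly with power series. Write $\alp(y)=\sum_{i\in\bfZ}b_iy^i$ with $|b_iy^i|_\calA\to 0$, so that $s:=|\alp(y)|_\calA=\max_i|b_iy^i|_\calA<\infty$. Raising to the $p$-th power and multiplying out, $\alp(y)^p=\bigl(\sum_i b_iy^i\bigr)^p$ is a (convergent) sum of monomials $\binom{p}{j_1,\dots}b_{i_1}\cdots b_{i_p}\,y^{i_1+\cdots+i_p}$. The plan is to split this sum into the ``diagonal'' contribution and the rest: the diagonal terms $b_i^p y^{pi}$ produce $\sum_i b_i^p y^{pi}$, which is visibly of the form $\alp_1(y^p)$; every other term has a multinomial coefficient divisible by $p$. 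So one sets $\alp_1(y^p)=\sum_i b_i^py^{pi}$ and lets $p\alp_2(y)$ be everything else — the point being that every off-diagonal multinomial coefficient $\binom{p}{j_1,\dots,j_m}$ with at least two distinct indices is divisible by $p$, hence ``factor out one $p$'' to define $\alp_2(y)$ with integer coefficients.

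The norm bounds are then immediate from non-archimedean estimates. For $\alp_1$: $|\alp_1(y^p)|_\calA=\max_i|b_i^py^{pi}|_\calA=\max_i|b_iy^i|_\calA^p=s^p$, which is in fact an equality. For $\alp_2$: after the factorization, $\alp_2(y)$ is a $\bfZ$-linear combination of monomials $b_{i_1}\cdots b_{i_p}y^{i_1+\cdots+i_p}$, and each such monomial has norm $\le|b_{i_1}y^{i_1}|_\calA\cdots|b_{i_p}y^{i_p}|_\calA\le s^p$ (using that $|\cdot|_\calA$ is multiplicative on monomials and that the integer coefficients have norm $\le 1$). Since the spectral norm is non-archimedean, $|\alp_2(y)|_\calA\le s^p$. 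Convergence of the rearranged series is not an issue: the family of all monomials appearing in $\alp(y)^p$ is summable in $\calA$ because $|b_iy^i|_\calA\to 0$, so any regrouping converges to the same element.

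The one genuinely substantive point — though it is classical — is the divisibility claim: if $j_1+\cdots+j_m=p$ with $m\ge 2$ and all $j_r\ge 1$, then $p\mid\binom{p}{j_1,\dots,j_m}$. This follows because $\binom{p}{j_1,\dots,j_m}=\frac{p!}{j_1!\cdots j_m!}$, the numerator is divisible by $p$ exactly once (as $p$ is prime), and each $j_r<p$ so none of the factorials in the denominator is divisible by $p$. I expect this to be the ``hard part'' only in the bookkeeping sense: one must be slightly careful that in characteristic $p$ itself the statement still makes sense — here $p\alp_2(y)$ is literally zero when $\cha(k)=p$, the diagonal $\alp_1(y^p)=\alp(y)^p$ is just the Frobenius, and the bounds hold trivially. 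So the proof naturally covers both the mixed-characteristic and the equal-characteristic cases at once, with the multinomial divisibility being the only input beyond elementary non-archimedean estimates.
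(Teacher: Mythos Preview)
Your proof is correct and takes exactly the same approach as the paper, which simply records $\alp_1(y^p)=\sum_i b_i^p y^{pi}$ and leaves the remaining verification to the reader. One minor slip: the spectral norm on $\calA$ is only submultiplicative on monomials in general (e.g.\ $|y\cdot y^{-1}|_\calA=1<r^{-1}=|y|_\calA|y^{-1}|_\calA$), not multiplicative as you state, but submultiplicativity is all you actually use for the bound on $\alp_2$.
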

\begin{proof}
If $\alp=\sum_{i\in\bfZ} c_iy^i$ then one can take $\alp_1=\sum_{i\in\bfZ}c_i^py^{i}$.
\end{proof}

\subsubsection{Estimating the error}
Viewing $\psi$ and $\lam$ as elements of $\calB$ and $\calA$, respectively, set $$s_p=s_p(\psi):=|x^{-1}\psi(x)-1|_\calB=|y^{-p}\psi(y^p)-1|_\calA.$$ In addition, set $s=s(\lam):=|p^{-1}y^{-p}\lam|_\calA$ in case (1), and $s=s(\lam):=|t^{-1}\lam-1|_\calA$ in case (2). Note that  $s<1$ and $s_p<1$ for any two-term decomposition of a monic presentation, and if $s=s_p=0$ then $\varphi=\psi+\lam=y^p+t$ is either Kummer or binomial. Our strategy will be to alternate coordinate changes of $y$ that ``improve'' $\lam$ and reduce $s$, and coordinate changes of $x$ that ``improve'' $\psi$ and reduce $s_p$.

\subsubsection{Improving $\psi(y^p)$}\label{hsec}
We simply define $h(x)$ to be the inverse of $\psi(x)$ in the group $G(r^p)^{\circ\circ}=\Aut(\calB)^\circcirc$. Also, set $h_0(x)=h(x)-x$. Until the end of Section~\ref{annulisec}, notation like $\varphi'$ denotes another presentation of $f$ rather than the derivative of $\varphi$.

\begin{lem}\label{psilem}
Assume that $\varphi=\psi+\lam$ is a presentation of $f$ with a two-term decomposition, and let $h$ be as above. Then $\varphi'(y)=h(\varphi(y))$ has a simple two-term decomposition $y^p+\lam'$ such that $s'=s(\lam')\le\max(s,s_p)$.
\end{lem}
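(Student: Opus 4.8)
The plan is to compose $\varphi$ with $h$, keep track of the ``main term'' $x\mapsto\psi(x)$ getting straightened to $x\mapsto x$, and bound how badly the error term $\lam$ gets perturbed. Recall that $h\in G(r^p)^{\circ\circ}$ is the inverse of $\psi$ (viewed as an automorphism of $\calB$), so $h\circ\psi=\mathrm{id}$ and $\|h\|=\|\psi\|=s_p$ by Corollary~\ref{invnormcor}(i). Writing $\varphi=\psi(y^p)+\lam(y)$, the composition is $\varphi'(y)=h(\psi(y^p)+\lam(y))$. The term $\lam$ is strictly dominated by $\psi(y^p)$ on the skeleton (since $\lam\prec py^p$ in case (1) and $t\prec y^p$ in case (2), while $\psi(y^p)$ has dominant term $y^p$), so I would apply Lemma~\ref{autlem}(ii) with $\phi=h$, $u=\psi(x)$ (a unit strictly dominating $\lam$), to compare $h(\psi(y^p)+\lam)$ with $h(\psi(y^p))=y^p$. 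This gives
\begin{equation*}
|\psi(x)^{-1}\bigl(h(\psi(y^p)+\lam)-h(\psi(y^p))\bigr)|_\calA\le|\psi(x)^{-1}\lam|_\calA\cdot\|h\|.
\end{equation*}
Hence $\varphi'(y)=y^p+\lam'(y)$ where $\lam'=h(\psi(y^p)+\lam)-y^p$, and, since $|\psi(x)|_\calA=1$ and $|y^p|_\calA=1$, the estimate reads $|\lam'|_\calA\le|\lam|_\calA\cdot\|h\|\le|\lam|_\calA$. This already shows $\varphi'=y^p+\lam'$ is a \emph{simple} two-term decomposition provided $\lam'$ still satisfies the defining domination condition for the dominant tame term $t$ of $\varphi$.

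The remaining point is to check that $\lam'$ retains the correct shape — i.e.\ in case (2) its dominant term is still $t=c_ny^n$, and in case (1) it is still $\prec py^p$ — and to extract the claimed bound $s'=s(\lam')\le\max(s,s_p)$ on the normalized error. For this I would separate $\lam'$ into the ``old error'' part and the ``correction'' coming from straightening $\psi$. Concretely, write $h(\psi(y^p)+\lam)=h(\psi(y^p))+\bigl(h(\psi(y^p)+\lam)-h(\psi(y^p))\bigr)$; the first summand is $y^p$, so $\lam'$ equals the parenthesized difference. Apply Lemma~\ref{domlem}-type expansions (as packaged in Lemma~\ref{autlem}(ii)) term-by-term: the leading contribution to $\lam'$ is $\lam$ itself (the derivative of $h$ at the identity direction being $1$), and all further contributions are bounded by $|\lam|_\calA\cdot\|h\|\le|\lam|_\calA\cdot s_p$ and by $s\cdot s_p$ after normalizing. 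In case (2), since $t$ strictly dominates all other terms of $\lam$ and the correction terms are strictly smaller (they carry a factor $\|h\|=s_p<1$ relative to $|\lam|_\calA=|t|_\calA$ unless they already were present in $\lam$ with coefficient making $s<1$), $t$ remains the dominant term of $\lam'$; dividing by $t$ and subtracting $1$ gives $s'=|t^{-1}\lam'-1|_\calA\le\max(s,s_p)$. In case (1) the analogous normalization by $py^p$ gives $s'=|p^{-1}y^{-p}\lam'|_\calA\le\max(s,s_p)$ directly from $|\lam'|_\calA\le|\lam|_\calA$ together with the fact that the correction terms, being built from $h_0=h-\mathrm{id}$ applied to something $\prec py^p$, stay $\prec py^p$.

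The main obstacle I anticipate is \emph{not} the size estimate — that is essentially immediate from Lemma~\ref{autlem}(ii) — but rather verifying that the dominant-tame-term structure of the decomposition is preserved, i.e.\ that after composing with $h$ no term of $\lam'$ with index coprime to $p$ sneaks up to compete with (or exceed) $t$. This is where one must use that $h\in G(r^p)^{\circ\circ}$ acts only through its \emph{$p$-th power} variable $x$, so every new term it produces has exponent in the $y^p$-lattice modified only by the already-present terms of $\lam$; combined with $\|h\|<1$ this forces all genuinely new contributions to be strictly dominated by $t$ (resp.\ by $py^p$). Once this bookkeeping is in place, the inequality $s'\le\max(s,s_p)$ follows by dividing through by the appropriate normalizing monomial and reading off the spectral norm, exactly as in the two displayed estimates above.
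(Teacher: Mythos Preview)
Your first paragraph misapplies Lemma~\ref{autlem}(ii). That lemma compares $g\circ\phi$ with $g$, i.e.\ it perturbs the \emph{input variable} by an automorphism $\phi$ acting on the right. You instead want to compare $h(\psi(y^p)+\lam)$ with $h(\psi(y^p))$, which is $h$ evaluated at two nearby \emph{points} of $\calA$; there is no automorphism $\phi$ of the $y$-annulus (nor of the $x$-annulus) with $\psi(y^p)+\lam=\psi\circ\phi$ in general, because in case~(2) the element $\lam$ has a dominant term $c_ny^n$ with $(n,p)=1$ and hence is not a function of $y^p$. Worse, your displayed inequality would give $|\lam'|_\calA\le|\lam|_\calA\cdot\|h\|=s_p\,|\lam|_\calA$, and this is simply false: since $\lam'=\lam+\rho$ with a small correction $\rho$, one has $|\lam'|_\calA=|\lam|_\calA$, not something smaller by a factor $s_p$. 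You notice this yourself in the second paragraph (``the leading contribution to $\lam'$ is $\lam$ itself''), which directly contradicts the bound you just wrote down.

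The idea in your second paragraph is the right one, but it needs to be carried out precisely, and the correct tool is Lemma~\ref{domlem}, not Lemma~\ref{autlem}(ii). Write $h(x)=x+h_0(x)$ with $h_0(x)=\sum_i a_ix^i$; then
\[
\varphi'=h(\psi+\lam)=(\psi+\lam)+h_0(\psi+\lam)=y^p+\lam+\rho,\qquad \rho:=h_0(\psi+\lam)-h_0(\psi),
\]
using $h(\psi(y^p))=y^p$. Now expand $\rho=\sum_ia_i\bigl((\psi+\lam)^i-\psi^i\bigr)$ and apply Lemma~\ref{domlem} to each summand: $(\psi+\lam)^i-\psi^i=\psi^{i-1}\lam\,Q_i$ with $|Q_i|_\calA\le1$. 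Combining this with $\|h\|=\|\psi\|=s_p$ (so $|a_i\psi^{i-1}|_\calA\le s_p$ for every $i$) yields $\rho=b\lam$ with $|b|_\calA\le s_p$. From $\lam'=(1+b)\lam$ the bound $s'\le\max(s,s_p)$ follows in both cases, and the dominant-tame-term structure is automatically preserved since $\lam'$ is just $\lam$ times a unit close to $1$. This is exactly the paper's argument; your sketch in paragraph two is heading there but never actually executes this computation, and the ``derivative of $h$'' heuristic is not the right way to isolate the leading $\lam$-term.
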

\begin{proof}
Setting $\rho=h_0(\varphi)-h_0(\psi)$ and $\lam'=\lam+\rho$ we have that $$\varphi'=\varphi+h_0(\varphi)=\psi+\lam+h_0(\psi)+\rho=h(\psi)+\lam+\rho=y^p+\lam'.$$ We claim that this is a required two-term decomposition, and to show this we should somehow control $\rho$. In fact, it is easy to see that the claim (and the lemma) will follow once we show that $\rho=b\lam$ with $|b|_\calA\le s_p$.

If $h_0(x)=\sum_{i\in\bfZ}a_ix^i$ then $\rho=\sum_{i\in\bfZ}\rho_i$, where $\rho_i=a_i((\psi+\lam)^i-\psi^i)$. As $\psi$ strictly dominates $\lambda$ by the definition of the two-term decomposition, we have $\rho_i=a_i\psi^{i-1}\lambda Q_i$ with $|Q_i|_\calA\le 1$ by Lemma \ref{domlem}. So, $b$ is the sum of $b_i=a_i\psi^{i-1} Q_i$ and it suffices to prove that $|b_i|_\calA\le s_p$. Since $\|h\|=\|\psi\|$ by Corollary~\ref{invnormcor}(i), $$|x^{-1}h_0|_\calB=||h||=||\psi||=|x^{-1}\psi(x)-1|_\calB=s_p.$$ Therefore  $|a_i|\le s_p$  for $i>0$ and $|a_i|\le r^{pi-p}s_p$ for $i\le 0$. Since $|\psi(y^p)^i|_\calA=|y^{ip}|_\calA$ for any $i\in\bfZ$, we obtain that $|a_i\psi^{i-1}|_\calA \le s_p$ and hence $|b_i|_\calA\le s_p$, as required.
\end{proof}

For the sake of simplicity, in the next two lemmas we restrict to the case of simple decompositions.

\subsubsection{Improving $\lam$ in case (1)}\label{case1sec}
Set $\alpha=-p^{-1}y^{-p}\lam$ and $g(y)=y+\alpha y$. Clearly $g$ is an element of $G(r)^{\circ\circ}=\Aut(\calA)^\circcirc$ satisfying $\|g\|=|\alpha|_\calA=s$.

\begin{lem}\label{lamlem1}
Assume that $\varphi=y^p+\lam$ is a simple two-term decomposition in case (1), and let $g(y)$ be as above. Then $\varphi'=\varphi(g(y))$ has a two-term decomposition $\psi'+\lam'$ such that $s'_p=s_p(\psi')\le s^p$ and $s'=s(\lam')\le s^2$.
\end{lem}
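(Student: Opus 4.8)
The plan is to substitute $g$ into $\varphi$, expand $\varphi'(y)=\varphi(g(y))=g(y)^p+\lam(g(y))$, and read off a two-term decomposition by separating the part that is a power series in $y^p$ from the part lying in $py^p\calA$. Write $g(y)=y(1+\alpha)$, so that $|\alpha|_\calA=\|g\|=s$; recall that in case (1) we have $t=0$ and the two-term condition is $\lam\prec py^p$.

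First I would treat $g(y)^p=y^p(1+\alpha)^p$. Since $\binom{p}{i}$ is divisible by $p$ for $1\le i\le p-1$, we have $(1+\alpha)^p=1+p\bigl(\sum_{i=1}^{p-1}\tfrac{1}{p}\binom{p}{i}\alpha^i\bigr)+\alpha^p$, and Lemma~\ref{palplem} applied to $\alpha$ lets us write $\alpha^p=\alpha_1(y^p)+p\alpha_2$ with $|\alpha_1(y^p)|_\calA,|\alpha_2|_\calA\le s^p$. Thus $(1+\alpha)^p=1+\alpha_1(y^p)+p\gamma$, where $\gamma=\alpha+\sum_{i=2}^{p-1}\tfrac{1}{p}\binom{p}{i}\alpha^i+\alpha_2$ satisfies $|\gamma-\alpha|_\calA\le s^2$ because each omitted summand has norm $\le s^2$ (using $s<1$). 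Hence $g(y)^p=\bigl(y^p+y^p\alpha_1(y^p)\bigr)+py^p\gamma$.

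Next I would control $\lam(g(y))$. Since $\lam\prec py^p$ and $g\in\Gcirccirc$ with $\|g\|=s$, Lemma~\ref{autlem}(ii) with $u=py^p$ gives $\lam(g(y))=\lam+py^p\eta$ with $|\eta|_\calA\le\|g\|\cdot|(py^p)^{-1}\lam|_\calA=s\cdot s=s^2$. Now the decisive point is the cancellation built into the choice of $\alpha$: as $py^p\alpha=-\lam$, collecting the pieces yields
$$\varphi'=\bigl(y^p+y^p\alpha_1(y^p)\bigr)+\bigl(py^p\gamma+\lam+py^p\eta\bigr)=\bigl(y^p+y^p\alpha_1(y^p)\bigr)+py^p(\gamma-\alpha+\eta).$$
Setting $\psi'(y^p)=y^p+y^p\alpha_1(y^p)$ and $\lam'=py^p(\gamma-\alpha+\eta)$, this is a two-term decomposition of the presentation $\varphi'$ (in case (1), since $t=0$ and $\lam'\prec py^p$; its dominant term is $y^p$ up to a leading unit reducing to $1$, which does not affect either invariant): indeed $s_p(\psi')=|\alpha_1(y^p)|_\calA\le s^p$ and $s(\lam')=|(py^p)^{-1}\lam'|_\calA=|\gamma-\alpha+\eta|_\calA\le s^2$, the claimed bounds.

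The only genuine obstacle is the bookkeeping in the first step — correctly isolating the part of $(1+\alpha)^p$ that is a series in $y^p$ (which is exactly what Lemma~\ref{palplem} provides) from the part divisible by $p$ — together with the observation that the a priori uncontrolled first-order term $\lam$ appearing in $\varphi'$ is cancelled, up to order $s^2$, by the $py^p\alpha$ coming out of $g(y)^p$; this is precisely why $g$ was defined with this particular $\alpha$. Everything else is routine manipulation via Lemma~\ref{autlem}(ii) and Lemma~\ref{domlem}.
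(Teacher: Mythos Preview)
Your proof is correct and follows essentially the same approach as the paper's: expand $(1+\alpha)^p$ via the binomial formula, split off $\alpha^p=\alpha_1(y^p)+p\alpha_2$ using Lemma~\ref{palplem}, use the built-in cancellation $py^p\alpha=-\lam$, and bound $\lam(g)-\lam$ via Lemma~\ref{autlem}(ii). The paper packages the middle binomial terms as $p\alpha^2Q$ with $Q\in\calA^\circ$ where you write them out explicitly, but the computation and the resulting decomposition $\psi'=y^p(1+\alpha_1)$, $\lam'=py^p(\alpha^2Q+\alpha_2)+\lam(g)-\lam$ are the same.
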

\begin{proof}
We have that $\varphi'(y)=g^p+\lam(g)=y^p(1+\alpha)^p+\lam(g)$. Taking $\alpha^p=\alp_1+p\alp_2$ as in Lemma~\ref{palplem} we obtain $$(1+\alpha)^p=1+p\alpha+p\alpha^2 Q +\alp_1+p\alp_2$$ for some $Q\in \calA^\circ$. By the definition of $\alpha$ we have $py^p\alpha=-\lam(y)$, hence $\varphi'=\psi'+\lam'$ with $\psi'=y^p(1+\alp_1)$ and $$\lam'=py^p(\alpha^2 Q+\alp_2)+\lam(g(y))-\lam(y).$$

By the construction, all terms of $\psi'$ are $p$-th powers of $y$ and $|y^{-p}\psi'-1|_\calA=|\alp_1|_\calA\le s^p$. It remains to show that $\lam'\prec py^p$ and $s(\lam')\le s^2$. Since $|\alp_2|_\calA\le s^p$ and $Q\in \calA^\circ$, we immediately obtain that $|\alpha^2 Q+\alp_2|_\calA\le s^2$. The desired bound on the second term of $\lam'$ is obtained by Lemma~\ref{autlem}(ii): $$|p^{-1}y^{-p}(\lam(g(y))-\lam(y))|_\calA\le |p^{-1}y^{-p}\lam|_\calA\cdot \|g\|=s^2.$$
\end{proof}

\subsubsection{Improving $\lam$ in case (2)}\label{case2sec}
Note that $c_n^{-1}\lam\in \calA$ possesses an $n$-th root $\rho(y)=c_n^{-1/n}\lam^{1/n}$ with dominant term $y$. So, $\rho$ is an element of $\Aut(\calA)^\circcirc$ and we define $g(y)$ to be the inverse automorphism.

\begin{lem}\label{lamlem2}
Assume that $\varphi=y^p+\lam$ is a simple two-term decomposition in case (2), and let $g(y)$ be as above. Then $\varphi'=\varphi(g(y))$ has a two-term decomposition $\psi'+\lam'$ such that $s'_p=s_p(\psi')\le s^p$ and $s'=s(\lam')\le\gamma s$, where $\gamma:=|t^{-1}py^{p}|_\calA<1$.
\end{lem}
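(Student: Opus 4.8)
The plan is to exploit that $g$ was chosen precisely so that the tame tail collapses to the monomial $t$ under the substitution. Indeed, by construction $\rho(g(y))=y$, and the defining relation $\rho=c_n^{-1/n}\lam^{1/n}$ means $\lam=c_n\rho^n$ as an element of $\calA$; composing with $g$ therefore gives $\lam(g(y))=c_n\rho(g(y))^n=c_ny^n=t$, \emph{exactly}. Hence
$$\varphi'(y)=\varphi(g(y))=g(y)^p+\lam(g(y))=g(y)^p+t,$$
and the whole problem reduces to analysing the single term $g(y)^p$.

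First I would bound $\|g\|$. By Corollary~\ref{invnormcor}(i) one has $\|g\|=\|\rho\|$, and writing $\lam=t(1+u)$ with $|u|_\calA=s$ gives $\rho=c_n^{-1/n}\lam^{1/n}=y(1+u)^{1/n}$, so $\|\rho\|=|(1+u)^{1/n}-1|_\calA\le s$: since $(n,p)=1$, the series $(1+u)^{1/n}-1$ is a power series in $u$ with $\kcirc$-coefficients and no constant term (the convergence fact already used in \S\ref{Kumsec}). Thus $g(y)=y(1+v)$ with $|v|_\calA=\|g\|\le s$. Now expand $g(y)^p=y^p(1+v)^p$: because $p\mid\binom pj$ for $1\le j\le p-1$ we may write $(1+v)^p=1+v^p+pw$ with $w\in\calA$ and $|w|_\calA\le|v|_\calA\le s$, while Lemma~\ref{palplem} applied to $v$ yields $v^p=\alp_1(y^p)+p\alp_2$ with $|\alp_1(y^p)|_\calA\le s^p$ and $|\alp_2|_\calA\le s^p$. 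Collecting the terms that are powers of $y^p$,
$$g(y)^p=y^p\bigl(1+\alp_1(y^p)\bigr)+py^p(\alp_2+w),$$
so $\varphi'=\psi'+\lam'$ with $\psi'(y^p)=y^p(1+\alp_1(y^p))$ and $\lam'=t+py^p(\alp_2+w)$. All exponents of $\psi'$ are divisible by $p$ and $s_p(\psi')=|\alp_1(y^p)|_\calA\le s^p$, which is the first assertion.

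Finally I would check that $\psi'+\lam'$ is a legitimate two-term decomposition in case (2) and estimate $s(\lam')$. In case (2) one has $\delta_l=|c_ny^{n-p}|_l$ with values in $(|p|,1)$ by \S\ref{etalecovsec} and Lemma~\ref{relmonlem}, so $py^p\prec t$ on $l$; in particular $\gamma:=|t^{-1}py^p|_\calA<1$. Since $|\alp_2+w|_\calA\le\max(s^p,s)=s$, the tail of $\lam'$ satisfies $|t^{-1}(\lam'-t)|_\calA=|t^{-1}py^p(\alp_2+w)|_\calA\le\gamma s<1$; hence $t$ is the dominant term of $\lam'$, the decomposition is of the required type, and $s'=s(\lam')=|t^{-1}\lam'-1|_\calA\le\gamma s$. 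The same comparison shows that $y^p$ still strictly dominates every other term of $\varphi'$, so $\varphi'$ is again a monic presentation of $f$.

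All these estimates are short, so there is no real obstacle; the single point that is not pure bookkeeping is the opening observation that the chosen $g$ turns $\lam\circ g$ into the exact monomial $t$. After that, everything follows from the binomial expansion of $(1+v)^p$ together with Lemma~\ref{palplem}, and the gain of the factor $\gamma$ comes for free from the $py^p$ standing in front of $\lam'-t$.
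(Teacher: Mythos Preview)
Your argument is correct and is essentially the paper's own proof: the paper also uses $\lambda(g)=c_n\rho(g)^n=t$ exactly, bounds $\|g\|=\|\rho\|\le s$ via the $n$-th root expansion, writes $g(y)^p=y^p(1+\alpha)^p$ and splits off $\alpha^p$ via Lemma~\ref{palplem}, arriving at the same $\psi'=y^p(1+\alp_1)$ and $\lam'=t+py^p(\text{term of norm}\le s)$. The only cosmetic difference is notation (your $v,w$ versus the paper's $\alpha,\alpha Q$).
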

\begin{proof}
Present $\lam$ as $c_ny^n(1+\lam_0)$. Then $|\lam_0|_\calA=s$ and since $(n,p)=1$ we also have that $|(1+\lam_0)^{1/n}-1|_\calA\le|\lam_0|_\calA\le s$ (for example, use that the binomial series $(1+u)^{1/n}=\sum_{i\in\bfN}\binom{1/n}{i}u^i$ has coefficients in $\kcirc$). Since $\rho=y(1+\lam_0)^{1/n}$, we obtain that $\|\rho\|\le s$, and hence $\|g\|\le s$ by Corollary \ref{invnormcor}(i).

Note that $\varphi'=g^p+\lam(g)=g^p+c_n\rho(g)^n=g^p+t$. Setting $g=y+\alpha y$, we obtain that $g^p=(1+p\alpha Q +\alpha^p)y^p$ for some $Q\in \calA^\circ$. Since $|\alp|_\calA=\|g\|\le s$, we have that $|\alpha^p|_\calA\le s^p$. Taking $\alpha^p=\alp_1+p\alp_2$ as in Lemma~\ref{palplem}, we obtain $\varphi'=\psi'+\lambda'$, where $\psi'(y^p)=y^p(1+\alp_1)$ and $\lambda'(y)=t+py^p(\alpha Q+\alp_2)$. It remains to observe that $s'_p=|\alp_1|_\calA\le |\alpha^p|_\calA=s^p$ and $$s'=|t^{-1}py^p(\alpha Q+\alp_2)|_\calA\le|t^{-1}py^{p}|_\calA\cdot |\alpha Q+\alp_2|_\calA\le\gamma s.$$
Finally, $\gamma<1$ by Lemma \ref{tametermlem}.
\end{proof}

\begin{proof}[Proof of Theorem~\ref{binomth}]
By Lemma \ref{psilem} $f$ possesses presentations $\varphi$ which admit simple two-term decompositions. So, without restriction of generality, we can assume that the initial decomposition of $\varphi_0$ is simple, say $\varphi_0=y^p+\lam_0$. We inductively define a sequence of presentations $\varphi_i$ with simple two-term decompositions $y^p+\lam_i$ as follows:

(i) $\varphi'_i=\varphi_i\circ g_{i}$, with $g_i$ as in \S\ref{case1sec} or \S\ref{case2sec}, and, depending on the case, its two-term decomposition $\psi'_i+\lam'_i$ is obtained from $\varphi_i$ via Lemma~\ref{lamlem1} or Lemma~\ref{lamlem2}, respectively.

(ii) $\varphi_{i+1}=h_{i}\circ\varphi'_{i}$ with $h_i$ as in \S\ref{hsec}, and its two-term decomposition $y^p+\lam_{i+1}$ is obtained from $\varphi'_{i}$ via Lemma~\ref{psilem}.

Set $s_i=s(\lam_i)$. Then $s_{i+1}\le s_i^2$ in case (1), $s_{i+1}\le \max(s_i^p,\gamma s_i)$ in case (2), and $s_p(\psi'_{i})\le s_i^p$ in both cases by Lemmas~\ref{psilem}, \ref{lamlem1} and \ref{lamlem2}. Since $\gamma$ only depends on the tame term, it is fixed in the process, and we obtain that the sequences $(s_i)$ and $(s_p(\psi'_{i}))$ are strictly decreasing and converge to zero. By the construction, $\|h_i\|=s_p(\psi'_{i})$ and $\|g_i\|=s_i$. By Corollary \ref{invnormcor} the limits $g=\lim_n g_0\circ \ldots\circ g_n$ and $h=\lim_n h_n\circ \ldots\circ h_0$ exist, and the limit two-term decomposition of $\varphi:=h\circ \varphi_0 \circ g$ satisfies $s=s_p=0$. So, $\varphi=y^p+t$ is a presentation of $f$ as required.
\end{proof}

\begin{rem}\label{binomrem}
(i) We worked with strictly analytic annuli, but Theorem~\ref{binomth} and its proof apply to non-strict annuli as well. The only difference is that one has to work with skeletons $l=[r_1,r_2]$ and one cannot normalize the coordinate so that the dominant term $c_py^p$ becomes monic.

(ii) In particular, Theorem~\ref{binomth} applies to annuli $A(r_1,r_2)$ over a trivially valued algebraically closed ground field $k$ of positive characteristic. The interesting case is obtained for $r_2<1$, when the theorem is equivalent to the following statement: if $K=k((t))$ and $L/K$ is a separable extension of degree $p$, then there exist uniformizers $x\in K$ and $y\in L$ such that $y^p+y^n=x$, where $n>p$ and $(p,n)=1$. We have used here that the coefficient of $y^n$ can be taken $1$ by Lemma \ref{tametermlem2}(i). In particular, since $\delta_{L/K}=n-1$, we obtain that the $k$-isomorphism class of the extension $L/K$ (up to isomorphisms of both $K$ and $L$) is determined by the different. To the best of our knowledge this is a new result.

(iii) Note also that if $L/K$ in (ii) is Galois, then it is an Artin-Schreier extension, say $L=K(y)$ with $y^p-y\in K$. Replacing $y$ by $y-a$ with $a\in K$ one achieves that $y^p-y=x^{-l}$ for a uniformizer $x\in K$ and $l\ge 1$ with $(p,l)=1$. A simple classical computation shows that $\delta_{L/K}=(l+1)(p-1)$, in particular, $\delta_{L/K}$ is divisible by $p-1$. Since $\delta_{L/K}\ge p$, (ii) implies that the latter property characterizes Galois extensions among all separable wild extensions $L/K$ of degree $p$. In addition, the extension in (ii) is Galois if and only if $(p-1)|(n-1)$, and then $l=(n-p)/(p-1)$ for appropriate Artin-Schreier parameters.

(iv) One can also prove (ii) directly using the method of Section~\ref{proofsec}. The argument simplifies in two aspects: there is only the binomial case, and the automorphisms of $K$ and $L$ are given by series $t+\sum_{i\ge 2}a_it^i$ without negative power terms. In particular, an initial presentation looks as $x=\psi(y^p)+\lam(y)$, where $y^p$ and $cy^n$ are the dominant terms of $\psi$ and $\lam$, respectively.
\end{rem}

\subsection{\'Etale $p$-covers of punctured discs}
A similar and slightly simpler classification exists for \'etale $p$-covers of punctured discs.

\subsubsection{Punctured and pointed discs}
By a {\em pointed disc} we mean a closed disc $D$ with a fixed $k$-point $O$ called the origin. Its skeleton $l$ is the interval $l=[O,q]$, where $q$ is the maximal point of $D$. A monic coordinate is any function $t$ on $D$ vanishing at $O$ and taking $D$ isomorphically onto the unit closed disc $\calM(k\{t\})$. An analytic space isomorphic to $D\setminus\{O\}$ will be called a {\em punctured disc}. By an {\em $m$-cover} of pointed or punctured discs, we mean a finite morphism $f\:Y\to X$ of degree $m$ between pointed or punctured discs. In particular, in the case of pointed discs, $f$ is totally ramified at the origin. In fact, we do not really have to distinguish the two notions:

\begin{lem}\label{punchpointed}
Any $m$-cover of punctured discs extends uniquely to an $m$-cover of pointed discs.
\end{lem}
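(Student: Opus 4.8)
The plan is to reduce this to a Riemann extension / removable singularity statement for the map $f$ together with a descent argument for the origin. Let $Y=D_Y\setminus\{O_Y\}$ and $X$ be punctured discs, and let $f\:Y\to X$ be a finite morphism of degree $m$. The first step is to recall that a punctured disc has a canonical partial compactification: $X\hookrightarrow D_X$ where $D_X$ is the pointed disc obtained by adjoining the origin $O_X$, which corresponds to the open end of the skeleton $l_X=[O_X,q_X)$, i.e. the point where the monic coordinate $t$ tends to $0$. Concretely, if $X=\calM(k\{t\})\setminus\{O\}$ is standard, then $D_X$ is just $\calM(k\{t\})$; the general case follows since every punctured disc is isomorphic to the standard one. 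So the content of the lemma is that $f$ extends over this single added point on the source, and that the extended map is still finite of the same degree.

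The key step is the extension itself. First I would choose a monic coordinate $t$ on $X$, so $f$ is given, on the level of the skeleton, by $|f^*t|_{l_Y}$, a monomial function; orienting things so that $f^*t\to 0$ at the open end of $l_Y$. The function $f^*t$ is a nonvanishing analytic function on $Y$ (it vanishes nowhere because $t$ vanishes nowhere on $X$, the punctured disc), and $|f^*t|$ extends continuously by $0$ to $O_Y$. Then $Y':=D_Y$, the pointed disc filling in $O_Y$, together with the function $f^*t$, which extends to an analytic function on $Y'$ vanishing only at $O_Y$ (here one uses that a bounded analytic function on a punctured disc extends across the puncture — concretely, in a monic coordinate $s$ on $D_Y$ the function $f^*t$ is a Laurent series that has no negative-power terms because $|f^*t|\to 0$ at $O_Y$, so it is in fact a power series vanishing at $s=0$). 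This gives a morphism $Y'\to D_X$. Finiteness and preservation of degree then follow because finiteness can be checked after removing a point of codimension one on a normal (here, smooth) curve: away from $O_Y$ the map is the original finite $f$, and the local ring at $O_Y$ is finite over the local ring at $O_X$ since both are valuation rings / discrete in the relevant sense and $f^*t$ generates the maximal ideal up to a power. Alternatively one invokes that a morphism of nice curves which is finite away from a point of type $1$ and proper is finite.

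For the case of pointed discs in the statement, one just notes that an $m$-cover of punctured discs, once extended to $Y'\to D_X$, is automatically totally ramified at $O_Y$: the fiber over $O_X$ meets $D_Y$ only in $O_Y$ (since over the punctured part the preimage of $X$ is $Y$), so $O_Y\mapsto O_X$ with multiplicity $m$, which is exactly what it means to be an $m$-cover of pointed discs. I would also remark that the extension is unique, since two extensions agree on the dense open $Y$ and the curves are separated.

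The main obstacle I anticipate is purely foundational: making rigorous the ``Riemann extension'' step, i.e. that a finite, hence bounded, analytic function on a punctured Berkovich disc extends analytically across the puncture, and that the resulting morphism is genuinely finite rather than merely a morphism of analytic spaces. In the strictly $k$-analytic setting this is standard (it is the statement that $\calO(D\setminus\{O\})^{\mathrm{bdd}}=\calO(D)$, combined with the fact that properness plus quasi-finiteness implies finiteness for curves), but one has to phrase it carefully because the puncture is a type-$1$ point being added back. I expect the authors handle this by working with the coordinate description: $f$ is given by a convergent Laurent series, its modulus on the skeleton forces vanishing of the negative part, and then finiteness is immediate from the Weierstrass-type structure of $k\{s\}$ as a finite module over $k\{f^*t\}$.
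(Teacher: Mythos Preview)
Your approach is essentially the same as the paper's: reduce to a Laurent series in a monic coordinate and argue that the negative part vanishes. The paper is slightly more direct: since $a_my^m$ is the dominant term on \emph{every} annulus $A(r,1)$, letting $r\to 0$ forces $a_i=0$ for all $i<m$ (not just $i\le 0$), so the series lies in $y^m k\{y\}$ and finiteness of degree $m$ is immediate without a separate argument.
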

\begin{proof}
Let $f\:Y\to X$ be an $m$-cover of punctured discs. Choosing monic coordinates we can assume that $Y=D_y\setminus\{O_y\}$ and $X=D_x\setminus\{O_x\}$ are the standard punctured unit discs with coordinates $y$ and $x$. In particular, $f$ is given by an invertible function on $Y$. Viewing $f$ as a function on a closed annulus $A(r,1)\subset D_y$ we can present it as a series $\sum_{i\in\bfZ}a_iy^i$, and the same series represents $f$ for any $r$ with $0<r\le 1$. By our assumption $a_my^m$ is the dominant term on any $A(r,1)$. It follows that all terms with $i<m$ vanish, in particular, $f$ extends to the finite map $D_y\to D_x$.
\end{proof}

\subsubsection{\'Etale $p$-covers}
An $m$-cover of pointed discs will be called {\em \'etale} if it is \'etale on the corresponding punctured discs. If $(m,p)=1$ then it is easy to see that \'etale $m$-covers $Y\to X$ are Kummer, and all \'etale $m$-covers of $X$ are $X$-isomorphic. \'Etale $p$-covers are classified as follows:

\begin{theor}\label{binomdiscsth}
Assume that $f\:Y\to X$ is an \'etale $p$-cover of pointed discs.

(1) If $k$ is of mixed characteristic then the cover is Kummer, that is, $y^p=x$ for appropriate monic coordinates.

(2) If $k$ is of equal characteristic then the cover is binomial. In fact, $f$ admits a monic presentation $x=y^p+c_ny^n$ if and only if $|c_ny^{n-p}|_l=\delta_f|_l$, where $l=l(Y)$ is the skeleton of $Y$.
\end{theor}
\begin{proof}
Choosing monic coordinates of $Y$ and $X$ we obtain a presentation $x=\varphi(y)$ of $f$. Clearly, $c_py^p$ is the dominant term of $\varphi$, and by a linear change of coordinates we can assume that $c_p=1$. Since $f$ is totally ramified at the origin $O\in Y$, one in fact has that $\varphi=y^p+\sum_{i>p}c_iy^i$. The fact that $f$ is \'etale outside of $O$ means that $\varphi'$ is invertible outside of $O$, and hence the dominant term of $\varphi'$ is the first non-zero term. Now we have two cases:

(1) If $\cha(k)=0$ then $py^{p-1}$ is the dominant term of $\varphi'$. In particular, $|c_i|<|p|$ for any $i\in\bfN\setminus p\bfN$ and $\delta_l:=\delta_f|_l$ identically equals $|p|$.

(2) If $\cha(k)=p$ then the dominant term of $\varphi'$ is $nc_ny^{n-1}$, where $n$ is the smallest number such that $(n,p)=1$ and $c_n\neq 0$. In this case, we call $c_ny^n$ the {\em dominant tame term} of $\varphi$. Note that $|y^{-p}t|_l=\delta_l$.

We see that in the case of pointed discs the separation into two cases goes accordingly to the characteristic of $k$. The remaining argument follows the proof of Theorem~\ref{binomth} and is slightly simpler, so we only outline it. By a two-term decomposition of a monic presentation $\varphi$ we mean a decomposition $\varphi=\psi(y^p)+\lam(y)$, such that $y^p$ is the dominant term of $\psi$, $|\lam|<|p|$ in case (1), and $c_ny^n$ is the dominant term of $\lam$ in case (2). Then one modifies $\varphi$ by composing with automorphisms of punctured discs, precisely as in the proof of Theorem~\ref{binomth}. The computations simplify because these automorphisms are given by series $t+\sum_{i>2}a_it^i$ without negative terms.
\end{proof}

\section{Skeletons}\label{skelsec}

\subsection{Skeletons of nice curves}

\subsubsection{Four levels of skeletons}
In the simplest form, skeletons in Berkovich geometry are certain nice topological subspaces of an analytic space $X$. Usually, they admit a few natural levels of enhancements. Namely, the basic topological level (Top) can be enhanced to a reduction level (Red), a tropical or combinatorial level (Trop), and what we call a log reduction level (LogRed). There are forgetful functors (LogRed)$\to$(Red)$\to$(Top) and (LogRed)$\to$(Trop)$\to$(Top), and (LogRed) is obtained by combining the information of (Red) and (Trop). So, loosely speaking the information kept on these levels fits into a ``bicartesian diagram''
$$
\xymatrix{
(Top)\ar@{^(->}[d]\ar@{^(->}[r] & (Trop) \ar@{^(->}[d]\\
(Red)\ar@{^(->}[r] & (LogRed)
}
$$

We do not try to formalize the above principle, but we indicate the worlds these objects live at, and we will later see this in detail in the case of curves.

(Top) A skeleton $\Gamma_X$ is a topological space.

(Red) The reduction level corresponds to algebraic geometry over $\tilk$. A typical example, is the reduction $\gtX_s$ of $X$ corresponding to a formal model $\gtX$, and $\Gamma_X$ is reconstructed as the topological realization of a simplicial space related to $\gtX_s$.

(Trop) The tropical level corresponds to tropical geometry or PL geometry over $|k^\times|$. A typical example is the natural metric structure on skeletons of curves.

(LogRed) The log reduction level is an amalgam of (Red) and (Trop). Sometimes this can be done ad hoc, for example, via metrized curve complexes of Amini-Baker (\cite{Amini-Baker}). A more conceptual way to treat this information is to work within the framework of log geometry over $\Spec(\tilk)$ provided with the log structure $(\kcirc\setminus\{0\})/(1+\kcirccirc)\to\tilk$. The latter is associated with the prelog structure $\kcirc\setminus\{0\}\to\tilk$. Non-canonically, it is also associated with $|k^\times|_{\le 1}\to 0$.

\subsubsection{Triangulations}
Following Ducros, by a {\em triangulation} of a nice $k$-analytic curve $X$ we mean a finite set $V=V^{(1)}\coprod V^{(2)}\subset X$ of points of types 1 and 2 such that $V^{(2)}\neq\emptyset$ and $X\setminus V$ is a disjoint union of open discs, and finitely many punctured open discs and open annuli. One can enhance $V$ to the four levels as follows.

\subsubsection{Topological skeletons of curves}
Take $V=V_\Gamma$ to be the set of vertices of the graph $\Gamma$ whose edges are skeletons of punctured disc and annuli components of $\Gamma\setminus V$. Then $\Gamma$ is a (topological) skeleton of $X$ in the sense of \cite[Section 3.5]{CTT}. We call elements of $V^{(1)}=\Gamma^{(1)}$ {\em marked points of $X$} of {\em infinite vertices} of $\Gamma$, and we call elements of $V^{(2)}$ {\em finite} or {\em ordinary vertices}.

\subsubsection{Tropical skeletons of curves}
The graph $\Gamma$ inherits a natural metric from $X$ which is singular at all marked points. In addition, one often provides $\Gamma$ with the genus function $g\:\Gamma^0\to\bfN$ (\cite[\S3.3.2]{CTT}) making it a metric genus graph in the sense of \cite{CTT}.

\subsubsection{The reduction level}
Providing a triangulation $V$ is equivalent to marking a finite set of type 1 points $D=\Gamma^{(1)}$ and providing a semistable model $\gtX$ of $X$ such that the reduction map $\pi_\gtX\:X\to\gtX_s$ maps $D$ bijectively onto a finite set $D_s$ of smooth points of the nodal $\tilk$-curve $C=\gtX_s$. The formal fiber $X_q=\pi_\gtX^{-1}(q)$ over $q\in C$ can be as follows:

(1) If $q$ is a generic point then $X_q=\{x\}$ for $x\in V^{(2)}$, and $C_x$ is the corresponding normalized component of $C$.

(2) If $q$ is a nodal point then $X_q$ is an open annulus whose skeleton is a finite edge of $\Gamma$.

(3) If $q\in D_s$ is a marked point then $X_q$ is a pointed disc with marked point $Q\in D$ lifting $q$. Its skeleton is an infinite edge of $\Gamma$.

(4) If $q$ is an ordinary smooth point then $X_q$ is an open disc component of $X\setminus V$.

It follows that the multipointed $\tilk$-curve $(C,D_s)$ can be constructed from $X$ and $\Gamma$ in the following ad hoc manner. First, one takes the smooth curve $\tilC=\coprod_{x\in V^{(2)}} C_x$. Second, for any finite edge $e=[x_1,x_2]$ in $\Gamma$, one identifies the points of $C_{x_i}$ corresponding to $e$
(it may happen that $x_1=x_2$). This pushout procedure outputs a nodal pinching $C$ of $\tilC$. Third and final, for any edge $e=[x,d]$ with $d\in\Gamma^{(1)}$, one marks the point on $C_x$ corresponding to $e$.








\subsubsection{The log reduction level}\label{logredsec}
This level combines the tropical and reduction level. It can be achieved by enriching the metric graph $\Gamma$ by the relevant algebra-geometric information. This is done in the definition of metrized curve complexes in \cite{Amini-Baker}. Loosely speaking, at each finite vertex $v$ one installs the $\tilk$-curve $C_v$ and associates the edges starting at $v$ to points of $C_v$.

In this paper we will use another approach, which we find more conceptual. Note, nevertheless, that for semistable curves both approaches are equivalent, so everything can be translated to the language of \cite{ABBR}. We will enrich $C$ by a log structure $M_C$, which contains the information about the metric structure of $\Gamma$. Loosely speaking this provides a way to remember $|\pi|$ after reducing an equation of the form $xy-\pi=0$ modulo $\kcirccirc$. In fact, such situation was the original motivation for introducing log structures by Fontaine-Illusie.

The following ad hoc definition will suffice for our needs in the paper. Let $s^\rmlog$ denote the log enrichment of $s=\Spec(\tilk)$ by the log structure $M_s\to\tilk$ associated with $\kcirc\setminus\{0\}\to\tilk$. Note that $M_s=(\kcirc\setminus\{0\})/(1+\kcirccirc)$, and non-canonically this is also the log structure associated with $\oM_s=|k^\times|_{\le 1}\to 0$. We enrich the nodal curve $C$ to an $s^\rmlog$-curve $(C,M_C)$ as follows:

(1) The log structure is $s^\rmlog$-trivial at any point $q$ which is not marked or nodal, that is, $\oM_q=\oM_s$.

(2) The log structure at a marked point $u\in D_s$ is generated by a uniformizer $x_u\in m_u$, that is, $\oM_u=\oM_s\times x_u^\bfN$.

(3) The log structure at a nodal point $z\in D_s$ is generated by uniformizers $x_1$ and $x_2$ of the two branches at $z$ modulo a relation $x_1x_2=\pi$, where $\pi\in\kcirccirc$ is such that $|\pi|$ is the exponential modulus of the formal fiber $X_z$. Namely, $\oM_z=\oM_s\times x_1^\bfN\times x_2^\bfN/(x_1x_2=|\pi|)$. Note that this log structure is Zariski when $x_i$ lie on different components of $C$, but it is only an \'etale log structure otherwise, since $\oM_C$ only makes sense in the \'etale topology, and $x_1,x_2$ are only defined \'etale-locally.

\begin{rem}
We chose a relatively ad hoc definition of $M_C$, which was also used in the proof of \cite[Theorem~4.4]{logPicard}. A more conceptual way is outlined in appendix \ref{logredapp}: one promotes all objects, including $X$, $\gtX$ and $C$, to the log geometric level.
\end{rem}

\subsubsection{Nice $s^\rmlog$-curves}
By a {\em nice $s^\rmlog$-curve} we mean a nodal $\tilk$-curve $C$ with a finite set $U$ of smooth marked points provided with a following log structure: $\oM_q=\oM_s$ at ordinary smooth points, $\oM_u=\oM_s\times x_u^\bfN$ at marked points, and $\oM_z=\oM_s\times x_1^\bfN\times x_2^\bfN/(x_1x_2=r)$ at nodal points $z$, where $r\in|k^\times|_{<1}$. Note that $r$ is determined by the monoid $\oM_z$, and we will call $r=r(z)$ the {\em modulus} of the log node $z$. So, it is obvious that $(C,M_C)$ determines the metric skeleton $\Gamma$.

\begin{rem}
Any nice $s^\rmlog$-curve $(C,M_C)$ is log smooth over $s^\rmlog$. This fact is not essential for this paper, but we think it is a strong indication in favor of using the log reduction language.
\end{rem}

\subsection{Skeletons of morphisms}
Next, we discuss skeletons of finite covers of curves.

\subsubsection{Finite morphisms of $s^\rmlog$-curves}
We say that a morphism $\lam\:(C,M_C)\to(D,M_D)$ of $s^\rmlog$-curves is {\em finite} if the morphism $C\to D$ is. For any smooth point $q\in C$ we define $n_q$ to be the usual multiplicity of $q$ in the fiber. For a nodal point $z$ a priori there are two multiplicities corresponding to the two branches. However, the log structure forces them to coincide, and we will freely use the notation $n_z$. Let us justify this claim.

\begin{lem}\label{nlem}
Let $\lam$ be as above and $z\in C$ a nodal point with preimages $z_1,z_2$ in the normalization $C^\nor$. Then the multiplicities $n_i=n_{z_i}$ are equal and the moduli of $z$ and $t=\lam(z)$ are related by $r(z)^{n_1}=r(t)$.
\end{lem}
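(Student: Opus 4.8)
The plan is to extract the relation directly from the node log structures at $z$ and $t$. Note first that $t:=\lam(z)$ is necessarily a double point of $D$, since the statement refers to its modulus; I use this throughout. Everything is local at $z$, so I would pass to the completed local rings and work with the standard node charts guaranteed by the definition of a nice $s^\rmlog$-curve. Recall that $\oM_s=|k^\times|_{\le 1}$ canonically, via the absolute value, so that $\oM_s$ has no invertible element other than $1$; this will be used repeatedly.

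The first step is branch bookkeeping. Since $\lam$ is finite and sends the double point $z$ to the double point $t$, it carries the two branches of $C$ at $z$ bijectively onto the two branches of $D$ at $t$; label them so that the branch through $z_i$ maps to the branch through $t_i$. Choose uniformizers $x_1,x_2$ of the branches at $z$ and $y_1,y_2$ of the branches at $t$ generating the node charts, so that $x_1x_2=r(z)$ in $\oM_{C,z}$ and $y_1y_2=r(t)$ in $\oM_{D,t}$. By the definition of $n_i:=n_{z_i}$ one has $\lam^*y_i=(\text{unit})\cdot x_i^{n_i}$ in $\calO_{C,z}$; feeding this into the compatibility of $\lam^\flat$ with the structure maps $M\to\calO$, and using that the $\oM_s$-component must be trivial (as $\oM_s$ has no nontrivial units and $\lam^*y_i$ vanishes only along the $i$-th branch), I would obtain the key formula $\lam^\flat(y_i)=x_i^{n_i}$ in $\oM_{C,z}$.

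The second step is the computation. Since $\lam$ is a morphism over $s^\rmlog$, the map $\lam^\flat$ is the identity on $\oM_s$, hence
$$r(t)=\lam^\flat(r(t))=\lam^\flat(y_1y_2)=\lam^\flat(y_1)\,\lam^\flat(y_2)=x_1^{n_1}\,x_2^{n_2}\qquad\text{in }\oM_{C,z}.$$
Reducing the right-hand side to normal form by means of the relation $x_1x_2=r(z)$ yields $r(z)^{\min(n_1,n_2)}\,x_1^{\,n_1-\min(n_1,n_2)}\,x_2^{\,n_2-\min(n_1,n_2)}$. As the left-hand side lies in $\oM_s$, i.e.\ has trivial $x_1$- and $x_2$-exponents, this forces $n_1=n_2=:n$, and then $r(t)=r(z)^n$, which is the assertion.

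The only non-formal ingredient is the bijectivity of the branch correspondence used in the first step. It rests on the finiteness (and, implicitly, the flatness) of $\lam$ --- flatness being automatic in our situation, where $\lam$ is the reduction of a finite morphism of nice curves --- and it is the log-geometric shadow of the balancing condition for admissible covers. If one prefers to avoid the log bookkeeping altogether, there is a transparent alternative via the Berkovich picture: the formal fibres of $f$ over $z$ and $t$ are open annuli of moduli $r(z)$ and $r(t)$, $f$ restricts to a finite morphism between them, and by \S\ref{anncovsec} such a morphism has a well-defined degree $n$, equal to $n_z$, with $r(t)=r(z)^n$.
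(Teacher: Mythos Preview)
Your proof is correct and follows essentially the same route as the paper's: compute $\lam^\flat(y_1y_2)=r(t)$ in $\oM_{C,z}$, identify $\lam^\flat(y_i)$ with $x_i^{n_i}$, and read off both conclusions from the normal form in the monoid $\oM_s\times x_1^\bfN\times x_2^\bfN/(x_1x_2=r(z))$. Your write-up is in fact more careful than the paper's terse version (which contains a small notational slip) and makes explicit the one point the paper leaves implicit, namely why $\lam^\flat(y_i)=x_i^{n_i}$ with trivial $\oM_s$-part.

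One minor comment on your justification of the branch correspondence: you attribute it to finiteness and flatness of the underlying morphism, but for a bare finite morphism of nodal curves both branches at $z$ could map to the same branch at $t$. What actually rules this out here is the log structure: if that happened, $\lam^*(y_1)$ would restrict to a unit times a power of the uniformizer on \emph{each} branch of $C$, hence would not lie in the image of $M_{C,z}\to\calO_{C,z}$ (which consists of $0$ and elements of the form $(\text{unit})\cdot x_i^a$). So the existence of the log morphism already forces the bijection. Your Berkovich alternative is also a perfectly valid shortcut in the intended context.
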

\begin{proof}
If $x_i$ are uniformizers at $z_i$ and $y_i$ are uniformizers at the images of $z_i$ in $D^\nor$, then $(y_i)=(x_i^{n_{i}})$. So, $x_1^{n_{1}}x_2^{n_2}=y_1y_2=r(t)$ in $\oM_t$. It follows easily from the description of $\oM_t$ that $n_1=n_2$, and then also $r(z)^{n_1}=x_1^{n_{1}}x_2^{n_1}=r(t)$.
\end{proof}

\begin{rem}
A finite morphism $\lam\:(C,M_C)\to(D,M_D)$ of nice $s^\rmlog$-curves is log smooth at a point $q\in C$ if and only if $(n_q,p)=1$. Again, we will not really need this, but this provides a conceptual explanation to the fact that in all our work the tame case is very simple.
\end{rem}

\subsubsection{Compatible triangulations}
Let $f\:Y\to X$ be a finite morphism of nice curves. A pair of triangulations $V_Y\subset Y$ and $V_X\subset X$ is called {\em compatible} if $f^{-1}(V_X)=V_Y$. Such a pair extends to all four levels, in particular, it induces a finite morphism of the associated log reductions $\lam\:(C,M_C)\to(D,M_D)$. Clearly, using $\lam$ one can also descend to lower levels and obtain a finite morphism of metric graphs $\Gamma_f\:\Gamma_Y\to\Gamma_X$ and a finite morphism of nodal $\tilk$-curves $h\:C\to D$. Recall that the multiplicity function $n_f\:Y\to\bfN_{\ge 1}$ of $f$ (e.g., see \cite[\S2.1.5]{radialization}) is constant on each edge $e$ of $\Gamma_Y$ by \cite[Lemma~3.5.10]{CTT}. Clearly, $n_e=n_z$ for the corresponding nodal point $z\in C$, and $f|_e$ is monomial of degree $n_e$.

\begin{rem}\label{logfavorrem}
The maps $h$ and $\Gamma_f$ satisfy certain natural restrictions. For example, the multiplicities of $h$ along two branches of a nodal point coincide, and $\Gamma_f$ satisfies a harmonicity condition from \cite{ABBR}. We leave spelling this out to the interested reader. On the log reduction level, one only has to assume that $\lam$ is finite, and the other conditions follow. This is one more argument in favor of the logarithmic language in our setting.
\end{rem}

\subsubsection{Triangulation of covers}\label{logredsec2}
By a {\em triangulation} of a finite generically \'etale morphism $f\:Y\to X$ of nice curves we mean compatible triangulations $V=(V_Y,V_X)$ such that $V_Y$ contains the ramification set $\Ram(f)$. In this case, $f$ splits on the complement of the triangulations to a disjoint union of annular, disc, and punctured disc \'etale covers. The associated morphism $\lam\:(C,M_C)\to(D,M_D)$ of log reductions will be called the {\em log reduction} of $f$ associated with $V$.

\subsection{$p$-enhancements}

\subsubsection{The tame case}
It seems that if $f\:Y\to X$ is residually tame then any log reduction catches all discrete and $\tilk$-geometric information about $f$. We do not try to find a rigorous formulation of this principle, but illustrate it with the following example. The associated tropical skeleton $\Gamma_f\:\Gamma_Y\to\Gamma_X$ satisfies the Riemann-Hurwitz formulas at all non-boundary vertices $v$ of $\Gamma_Y$ of type 2:
$$2g(v)-2=n_v(2g(f(v))-2)+\sum_{e\in\Br(v)}(n_e-1).$$
Indeed, this is the usual RH formula for the reduction, but all entries are encoded already in $\Gamma_f$. For example, $n_e$ is the degree of $f$ on $e$.

\subsubsection{The different}
A new discrete invariant, which is non-trivial when $f$ is residually wild, was introduced in \cite{CTT}. It is the restriction of the different function $\delta_f$ onto $\Gamma_Y$. As we saw, the different is monomial on edges, and using its slopes one restores the RH formula and even extends it to the case when $f$ is not residually tame at $v$:
$$2g(v)-2=n_v(2g(f(v))-2)+\sum_{e\in\Br(v)}(-\slope_e(\delta_f)+n_e-1).$$

\subsubsection{The $p$-enhancement}
The bivariant forms $\tiltau_{f,y}$ we introduced earlier provide a natural way to upgrade $\delta_f|_{\Gamma_Y}$ to the log reduction level. We will only consider a special case when the wildness is not too large. Let $\lam\:C\to D$ be a finite morphism of nice $s^\rmlog$-curves, let $\{C_i\}_{i\in I}$ be the set of normalized irreducible components of $C$, and let $\eta_C=\{\eta_i\}_{i\in I}$ be the set of generic points of $C$. Assume that any closed point $v\in C$ at which $\lam$ is not log-\'etale satisfies $n_v=p$. In particular, if a morphism $C_i\to D_j$ is inseparable, then it is a geometric Frobenius. Then by a {\em $p$-enhancement} of $\lam$ we mean a following data:
\begin{itemize}
\item[(1)] meromorphic bivariant forms $\phi_i\in\omega_{k(\eta_i)/k(\lam(\eta_i))}$,

\item[(2)] a function $\delta\:\eta_C\to(0,1]$,
\end{itemize}
such that the following four conditions are satisfied, where for any point $q\in C_i$ we set $s_q=-\logord_q(\phi_i)$.
\begin{itemize}
\item[(1)] If $k(\eta_i)/k(\lam(\eta_i))$ is separable then $\delta(\eta_i)=1$ and $\phi_i=\tau_{k(\eta_i)/k(\lam(\eta_i))}$.

\item[(2)] If $k(\eta_i)/k(\lam(\eta_i))$ is inseparable then either $|p|<\delta(\eta_i)<1$ and $\phi_i$ is exact, or $\delta(\eta_i)=|p|$ and $\phi_i$ is mixed.

\item[(3)] Let $\delta_\Gamma\:\Gamma_C\to[0,1]$ be the extension of $\delta$ which is monomial on any edge and satisfies the condition $\delta_\Gamma(u)=|n_u|$ for any marked point $u\in C$. Then for any oriented edge $e$ starting at a finite vertex $q\in C_i$ the degree of $\delta_\Gamma$ on $e$ equals $s_q$.

\item[(4)] If $q$ sits over an ordinary smooth point then $s_q=0$.
\end{itemize}

\begin{rem}
Condition (3) can be reformulated more explicitly as the combination of the following two conditions:

(i) For any nodal point $z\in C$ and any order of the preimages $z_l\in C_{i_l}$, $l=1,2$ one has that $\delta(\eta_{i_1})r(z)^{s_{z_1}}=\delta(\eta_{i_2})$. In particular, $s_{z_1}=-s_{z_2}$.

(ii) For any marked point $u\in C_i$, if $\cha(k)>0$ then $s_q<0$ whenever $u$ is wild and $s_q=0$ whenever $u$ is tame, and if $\cha(k)=0$ then $s_q=0$, $\delta(\eta_i)=|n_u|$.


\end{rem}

\begin{rem}
It is an interesting question if there is a conceptual way to interpret our ad hoc definition. In particular, can it be related to a generalization of dualizing sheaves to morphisms of log schemes?
\end{rem}

\subsubsection{Enhanced log reduction of covers}
Now, let us return to our study of reductions of $f$.

\begin{theor}\label{logredth}
Let $f\:Y\to X$ be a generically \'etale, minimally wild on $Y$, finite morphism of nice $k$-analytic curves, let $V$ be a triangulation of $f$, and let $\lam\:C\to D$ be the corresponding log reduction of $f$. Then the forms $\tiltau_{f,y}$ for $y\in V^{(2)}$ and the restriction $\delta\:V^{(2)}\to(0,1]$ of $\delta_f$ provide a $p$-enhancement of $\lam$.
\end{theor}
\begin{proof}
In view of Lemma~\ref{tiltaulem}(ii), the forms $\tiltau_{f,y}$ satisfy condition (1) by Lemma~\ref{tautamelem}, and they satisfy condition (2) by Theorem~\ref{degpth}. Consider the function $\delta_\Gamma=\delta_f|_{\Gamma_C}$. We claim that it satisfies condition (3). Indeed, the condition for marked points is satisfied by Theorem~\ref{maindif}(i), and the condition for slopes is satisfied by Lemma~\ref{tiltaulem}(i). Finally, condition (4) follows from \cite[Theorem~6.1.9(i)]{CTT}.
\end{proof}

\section{The lifting theorem}\label{mainsec}

\subsection{Star-shaped curves}
By a {\em star-shaped} curve we mean a pair $(X,x)$, where $X$ is a nice $k$-analytic curve and $x\in X$ is a point of type 2 such that $X\setminus\{x\}$ is a disjoint union of open discs and semi-open annuli. A morphism $f\:(Y,y)\to (X,x)$ is a morphism $f\:Y\to X$ such that $f^{-1}(x)=\{y\}$.

\begin{lem}\label{starlem}
Assume given a finite separable extension $L/K$ of one-dimensional analytic $k$-fields of type 2.

(i) There exists a boundaryless star-shaped curve $(X_0,x)$ with an isomorphism $\calH(x)=K$ such that $x\in\Int(X_0)$.

(ii) For any curve $(X_0,x)$ as in (i) there exists a star-shaped subdomain $(X,x)\into(X_0,x)$ such that $x\in\Int(X)$ and $L/K$ lifts to a morphism $f\:(Y,y)\to(X,x)$ of star-shaped curves (in the sense that $\calH(y)=L$ as $K$-fields) such that $f$ is finite and \'etale.
\end{lem}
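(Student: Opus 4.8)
The plan is to prove (i) by a good-reduction construction and (ii) by spreading the extension $L/K$ out to a finite \'etale cover over a star-shaped neighbourhood of $x$. For (i): since $K$ is of type $2$, its residue field $\tilK$ is the function field of a unique smooth proper connected $\tilk$-curve $C$, and $C$ lifts to a smooth proper formal $\kcirc$-curve; I would let $X_0$ be its generic fibre. Then $X_0$ is a proper, hence boundaryless, nice $k$-analytic curve, the reduction map $\pi\:X_0\to C$ has a single point $x$ over the generic point of $C$ (of type $2$, with $x\in\Int(X_0)=X_0$), and for each closed $v\in C$ the formal fibre $\pi^{-1}(v)$ is an open disc; hence $X_0\setminus\{x\}$ is a disjoint union of open discs and $(X_0,x)$ is boundaryless and star-shaped. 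Finally $\calH(x)$ is a one-dimensional analytic $k$-field of type $2$ with residue field $\tilK$, so $\calH(x)\cong K$ by the classification of type-$2$ fields by their residue fields: each is the unramified extension of $\wh{k(t)}$ cut out by $\tilK/\tilk(\tilt)$, for $t$ a tame monomial parameter with $|t|=1$ as in \S\ref{redsec} (see also \cite{temst}).

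For (ii), write $L=K[\theta]/(P)$ with $P$ monic, separable and irreducible over $K$. Since $\calH(x)=K$ is the completion of the residue field of the local ring $\calO_{X_0,x}=\varinjlim_{U\ni x}\calO_{X_0}(U)$, I would approximate the coefficients of $P$ by functions regular on a sufficiently small affinoid neighbourhood $U$ of $x$, getting a monic $Q\in\calO_{X_0}(U)[\theta]$ whose image in $K[\theta]$ is still separable and irreducible and cuts out the same extension, $K[\theta]/(Q)\cong L$ over $K$ (Krasner's lemma and continuity of roots and of the discriminant). The discriminant of $Q$ is nonzero at $x$, hence invertible after shrinking $U$, so the morphism $f\:U'\to U$, where $U':=\calM\bigl(\calO_{X_0}(U)[\theta]/(Q)\bigr)$, is finite and \'etale, and its fibre over $x$ is $\calM(L)$, a single point $y$ with $\calH(y)=L$ as $K$-fields.

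It remains to shrink $U$ to a star-shaped subdomain and to verify that the cover stays star-shaped. Because $x$ is an interior type-$2$ point of the star-shaped curve $X_0$, it admits a cofinal family of star-shaped neighbourhoods, so I can pick a star-shaped analytic subdomain $(X,x)\hookrightarrow(X_0,x)$ with $X\subseteq U$ and $x\in\Int(X)$ --- concretely by shrinking each disc or semi-open-annulus component of $X_0\setminus\{x\}$ into $U$. Setting $Y=f^{-1}(X)\subseteq U'$ gives a finite \'etale morphism of nice curves with $f^{-1}(x)=\{y\}$ and $\calH(y)=L$. Since $X\setminus\{x\}$ is a disjoint union of open discs and semi-open annuli each limiting to $x$, and a finite \'etale cover of an open disc (resp.\ of a semi-open annulus) is a disjoint union of open discs (resp.\ of open discs and semi-open annuli), the set $Y\setminus\{y\}$ is again such a disjoint union with every component limiting to $y$; in particular $Y$ is connected and $(Y,y)$ is star-shaped. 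I expect this last step to be the only delicate one: one must produce an $X$ that is simultaneously star-shaped, contained in the prescribed $U$, and has $x$ in its interior, and one must invoke the fact that finite \'etale covers of discs and of semi-open annuli stay within this class, which rests on the structure theory of covers of nice curves (as in \cite{CTT} and \cite{ABBR}; for degree-$p$ covers of annuli this is essentially the content of Section~\ref{annulisec}). By comparison, the approximation/Krasner step and the computation of the fibre over $x$ are routine, and (i) is bookkeeping once the classification of type-$2$ fields is granted.
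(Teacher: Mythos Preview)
Your proposal is correct and follows the same strategy as the paper --- realize $K$ as $\calH(x)$ on some curve, spread $L/K$ out to an \'etale cover of a neighbourhood, then shrink to a star-shaped domain --- but you unpack each step where the paper simply cites two black-box facts.

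For (i) your good-reduction construction (lift the smooth proper $\tilk$-curve $C$ with $k(C)=\tilK$ to a smooth proper formal $\kcirc$-curve and take the generic fibre) is actually more complete than the paper's citation: Berkovich's Proposition~3.6.1 produces star-shaped \emph{neighbourhoods}, which are compact with boundary, whereas the lemma asks for a boundaryless (hence proper) star-shaped curve. Your route gives this directly, together with the identification $\calH(x)\cong K$ via the classification of type~2 fields by their residue fields.

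For (ii) your Krasner/approximation argument is exactly a hands-on proof of the paper's fact (2), the equivalence between finite \'etale covers of the germ $(X,x)$ and finite separable extensions of $\calH(x)$. Two small points on the final verification that $(Y,y)$ is star-shaped. First, a connected finite \'etale cover of a semi-open annulus cannot be an open disc (it must have nonempty boundary over the closed end), so drop that parenthetical. Second, and more substantively, the assertion that each such component is again a semi-open annulus is precisely the nontrivial structural input; it is true, but it does not follow from elementary considerations and is essentially equivalent to applying fact~(1) at the point $y$ (or invoking simultaneous semistable reduction for $f$). So you have not really bypassed the black box --- you have relocated it. This is fine, and you flag it honestly as the delicate step; just be aware that the cleanest way to close the argument is to invoke Berkovich's proposition for $y$ and then shrink $X$ and $Y$ compatibly, which is what the paper's two-line proof has in mind.
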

\begin{proof}
This is a simple consequence of the following two facts: (1) any point of type 2 has a fundamental family of star-shaped neighborhoods by \cite[Proposition~3.6.1]{berbook}, (2) given a germ $X_x$ of an analytic space $X$ at a point $x$, the category of its \'etale covers by germs $Y_y$ is equivalent to the category of finite separable extensions of $\calH(x)$, and the equivalence is obtained by sending $Y_y$ to $\calH(y)/\calH(x)$.
\end{proof}




\subsection{The main theorem}

\subsubsection{Minimally wild morphisms}
We say that a finite \'etale morphism $\lam\:C\to D$ of nice $s^\rmlog$-curves is {\em minimally wild} if any point $v\in C$ such that $f$ is not log-\'etale at $v$ satisfies $n_v=p$, and any fiber $f^{-1}(u)$ contains at most one such point. In particular, it follows that for any irreducible component $D_i$ there exists at most one irreducible component $C_j$ above $D_i$ such that $\lam_j\:C_j\to D_i$ is not generically \'etale, and in this case $\lam_j$ is the geometric Frobenius.

\subsubsection{The lifting theorem}
Here is the main lifting result of the paper.

\begin{theor}\label{mainth}
Let $\lam\:C\to D$ be a minimally wild morphism of nice $s^\rmlog$-curves provided with a $p$-enhancement $(\phi,\delta)$. Then there exists a morphism of nice $k$-analytic curves $f\:Y\to X$ such that $\lam$ is a logarithmic reduction of $f$.
\end{theor}
\begin{proof}

Step 1. {\it Setup.} We will use the following notation:

(1) $I$ and $J$ are the sets of normalized irreducible components $D_i$ of $D$ and $C_j$ of $C$. The generic points will be denoted $x_i\in D_i$ and $y_j\in C_j$, and the same letters will be used for the corresponding points of type 2 in $k$-analytic curves we will construct.

(2) The sets of marked points will be denoted $U\subset D$ and $V\subset C$. Note that we are allowed to enlarge the logarithmic structures on $D$ and $C$ in a compatible way. On the level of marked points this simply means that we add few smooth points to $U$ and add their preimages to $V$. This operation does not affect the $p$-enhancement. 

(3) The sets of nodal points will be denoted $Z\subset D$ and $T\subset C$.

Step 2. {\it Construction of star-shaped curves.} By Lemma~\ref{starlem}(i), for each $i\in I$ we can choose a star-shaped curve $(X_i,x_i)$ such that $x_i\in\Int(X_i)$ and $K_i:=\calH(x_i)$ satisfies $\tilK_i=k(D_i)$. The connected components of $X_i\setminus\{x_i\}$ are parameterized by the closed points of the smooth compactification of $D_i$, and we remove from $X_i$ the components parameterized by the points not lying in $D_i$. Then $(X_i,x_i)$ becomes a star-shaped curve such that whose reduction $C_{x_i}$ at $x_i$ is $D_i$.

For each $C_j$ mapped to $D_i$ we define $L_j/K_i$ as follows. If $C_j\to D_i$ is generically \'etale then $L_j/K_i$ is the unramified extension lifting $k(C_j)/k(D_i)$. If $C_j\to D_i$ is radicial then by Theorem~\ref{type2fields} there exists an extension $L_j/K_i$ such that $\delta_{L_j/K_i}=\delta(y_j)$ and $\tiltau_{L_j/K_i}=\phi_j$. By Lemma \ref{starlem}(ii), shrinking $X_i$ if necessary we can also construct for each $j$ a star-shaped curve $(Y_j,y_j)$ with a finite morphism $f_j\:Y_j\to X_i$ lifting the extension $L_j/K_i$.

For a point $q\in D_i$ the corresponding component $X_{i,q}$ of $X_i\setminus\{x_i\}$ is either a disc or an annulus. Shrinking $X_i$ and replacing $Y_i$ by its preimage we can achieve that $X_{i,q}$ is an annulus whenever $q$ is not an ordinary smooth point of $D$. Furthermore, enlarging the log structure we can also achieve that $X_{i,q}$ is an annulus if and only if $q$ is not an ordinary smooth point of $D$. Finally, shrinking the annuli $X_{i,q}$ we can assume that they all are of the same radius $r\in|k^\times|$ such that $r^2>r(z)$ for any $z\in Z$, and we fix a number $s\in|k^\times|$ such that $r<s<1$.

Step 3. {\it Objects of gluing data.} The morphism $f\:Y\to X$ will be glued from morphisms of three types:
\begin{itemize}
\item[(1)] $f_i\:Y_i=\coprod_{j\to i}Y_j\to X_i$ for $i\in I$, where the morphism $f_j\:(Y_j,y_j)\to(X_i,x_i)$ were constructed in Step 2.
\item[(2)] $f_u\:Y_u=\coprod_{v\to u}Y_v\to X_u$ for $u\in U$, where $(X_u,x_u)$ is a pointed closed disc of radius $s$ with coordinate $x$, and each $f_v\:(Y_v,y_v)\to(X_u,x_u)$ is an $n_v$-cover as follows. Let $C_i$ be the component containing $v$, $l=\logord_v(\phi_i)$ the slope of $\delta$ on $e_v$, and $c\in k$ such that $|c|=\delta(y_i)$. Then $f_v$ is the Kummer cover given by $x=y^{n_v}$ if $l=0$, and $f_v$ is the binomial cover given by $x=y^p+cy^l$ if $l\neq 0$.
\item[(3)] $f_z\:Y_z=\coprod_{t\to z}Y_t\to X_z$ for $z\in Z$, where $X_z$ is a closed annulus of radii $s^{-1}r(z)$ and $s$ with coordinate $x$, and $f_t\:Y_t\to X_z$ is an $n_z$-cover as follows. Let $C_{j_1}$ and $C_{j_2}$ be the components containing the preimages of $t$ in $C^\nor$ (possibly, $j_1=j_2$), and let $c\in k$ and $l\in\bfZ$ be such that $|c|=\delta(y_{j_1})$ and $|c|r(t)^l=\delta(y_{j_2})$. Then $f_t$ is the Kummer cover given by $x=y^{n_t}$ if $l=0$, and $f_t$ is the binomial cover given by $x=y^p+cy^l$ if $l\neq 0$.
\end{itemize}

Step 4. {\it The gluing maps.} To obtain $X$ we will patch the star-shaped curves $X_i$ by discs $X_u$ and glue these curves along the annuli $X_z$. This will be done so that the gluings extend to the morphisms $f_i$, $f_u$ and $f_z$. We have two cases:

(1) For each $u\in U$, consider the disc $X_u$ of radius $s$ and let $X'_u\subset X_u$ be the annulus of radii $r$ and $s$. Let $D_i$ be the normalized component containing $u$. The connected component of $X_i\setminus\{x_i\}$ corresponding to $u$ is a semi-open annulus $X_{i,u}$ of radii $r$ and $1$, and we define $X'_{i,u}$ to be the subannulus of radii $r$ and $s$. The base change of $f_v$ with respect to $X'_u\into X_u$ will be denoted $f'_v\:Y'_v\to X'_u$. Similarly, the base change of $f_{j,v}\:Y_{j,v}\to X_{i,u}$ with respect to $X'_{i,u}\into X_{i,u}$ will be denoted $f'_{j,v}\:Y'_{j,v}\to X'_{i,u}$.

We will glue $X_u$ and $X_i$ via an isomorphism $g_u\:X'_{u}\toisom X'_{i,u}$. If $\lam$ is log-\'etale over $u$, we choose an arbitrary $g_u$. Otherwise, there exists a single point $v\in\lam^{-1}(u)$ such that $n_v=p$. Let $C_j$ be the component of $v$, then $f'_{j,v}$ is an \'etale annular $p$-cover, and by Corollary~\ref{classcor} its isomorphism class is determined by the skeleton of the different. In Step 3, we chose the binomial or Kummer cover $f_v$ so that the skeletons of the differents of $f'_v$ and $f'_{j,v}$ are equal. Thus, $f'_v$ and $f'_{j,v}$ are isomorphic, and choosing appropriate isomorphisms $g_u$ and $g_v\:Y'_{j,v}\toisom Y'_v$ we obtain a gluing of morphisms $f_i$ and $f_v$ along $f'_v\toisom f'_{j,v}$.

Once $g_u$ is fixed we still have to glue other preimages of $X_u$. Namely, for any point $w\in\lam^{-1}(u)$ with $(n_w,p)=1$ we should lift the isomorphism $g_u$ to an isomorphism $g_w\:Y'_{j,w}\toisom Y'_w$. This is possible because all \'etale $n_v$-covers of $X'_u$ are $X'_u$-isomorphic by \S\ref{Kumsec}. This completes the gluing of $f_u$ and $f_i$ along $f'_u\toisom f'_{i,u}$.

(2) Let $z\in Z$ be a nodal point. The subannuli $A_1=A(r,s)$ and $A_2=A(s^{-1}r(z),r^{-1}r(z))$ are disjoint because $rs>r^2>r(z)$. Let $\tilz_1\in D_{i_1}$ and $\tilz_2\in D_{i_2}$ be the two preimages of $z$. We will glue $X_z$ to $X_{i_1,\tilz_1}$ and $X_{i_2,\tilz_2}$ along $A_1$ and $A_2$, respectively. This is done in a symmetric way, so we will only consider the case of $l=1$ and set $\tilz=\tilz_1$, $i=i_1$ and $X'_\tilz=A_1$ for shortness. The latter will be identified by an isomorphism $g_\tilz$ with the subannulus $X'_{i,\tilz}$ of $X_{i,\tilz}$ of radii $r$ and $s$.

Construction of $g_\tilz$ copies the construction of $g_u$ from the above case (1) almost verbatim. If the normalized morphism $C^\nor\to D^\nor$ is log \'etale over $\tilz$ then any $g_\tilz$ works. Otherwise, there is a single point $\tilt$ over $\tilz$ such that $n_\tilt=p$, and, copying the notation of (1), the $p$-covers $f'_\tilt$ and $f'_{j,\tilt}$ are isomorphic because they have the same skeletal different. We simultaneously choose gluing isomorphisms $g_\tilz\:X'_{\tilz}\toisom X'_{i,\tilz}$ and $g_\tilt\:Y'_{\tilt}\toisom Y'_{j,\tilt}$ establishing an isomorphism of the $p$-covers. For any other point $\tilw$ over $\tilz$, the morphisms $Y'_\tilw\to X'_\tilz$ and $Y'_{j,\tilw}\to X'_{i,\tilz}$ are Kummer covers of degree $n_\tilw\in\tilk^\times$, hence an isomorphism $g_\tilw\:Y'_\tilw\toisom Y'_{j,\tilw}$ can be chosen so that it lifts the already fixed isomorphism $g_\tilz$.

Step 5. {\it Verification.} It is easy to see that the connected components of $X\setminus\{x_i\}_{i\in I}$ are of three types:

(1) Open discs $D_q$ parameterized by ordinary smooth points $q\in D$. These are the disc components of the curves $X_i\setminus\{x_i\}$.

(2) Pointed open discs $(D_u,x_u)$ parameterized by points $u\in U$. Each $D_u$ is glued from the corresponding annulus $X_{i,u}$ and pointed disc $(X_u,x_u)$.

(3) Open annuli $A_z$ of radii $r(z)$ and $1$ parameterized by nodal points $z\in Z$. Each $A_z$ is glued from the corresponding annuli $X_{i_1,\tilz_1}$, $X_z$ and $X_{i_2,\tilz_2}$.

It follows that the points $\{x_i,x_u\}_{i\in I,u\in U}$ provide a triangulation with log reduction $C$. In the same way, one checks that the preimages of these points in $Y$ provide a triangulation of $Y$ and $\lam\:C\to D$ is the corresponding log reduction of $f$. Finally, by the construction of Step 2, the values of $\delta_f$ and  $\tiltau_f$ at the points $y_i$ coincide with $\delta(y_i)$ and $\phi_i$. So, $f$ is a required lift of $(\lam,\phi,\delta)$.
\end{proof}

\appendix

\section{The different function}\label{diffunsec}
Recall that a different function $\delta_f\:Y\to[0,1]$ was introduced in \cite{CTT}. It is $|k^\times|$-pm and assigns to $y\in Y^\hyp$ the value of the different of $\calH(y)/\calH(f(y))$.

\subsection{Restrictions on the slopes}
First, let us recall basic properties of $\delta_f$, see \cite[Corollary~4.1.8 and Theorems~4.2.6]{CTT}.

\begin{theor}\label{slopeth}
If $f\:Y\to X$ and $\Gamma_f\:\Gamma_Y\to\Gamma_X$ are as in \S\ref{convsec}, then

(i) $\delta_f$ is a $|k^\times|$-pm function on $Y^\hyp$, and it is monomial on each edge of $\Gamma_Y$,

(ii) $|n_f(y)|\le\delta_f(y)\le 1$ for any $y\in Y^\hyp$, and $\delta_f(y)=1$ if $f$ is residually tame at $y$,

(iii) for any $y\in Y^{(2)}$ and $v\in C_y$ the slope $s=\slope_v(\delta_f)$ satisfies $$|n_f(v)|\le\delta_f(y)\le|n_f(v)+s|.$$
\end{theor}

\begin{rem}
In fact, (ii) and (iii) are the only restrictions on $\delta$ and $s$, and any allowed combination can be already obtained for a binomial \'etale annular covering $A_1\to A_2$ of the form $x=t^n+ct^m$, with $n=n_v$, $s=n-m$ and $\delta_f(y)=|c|$.
\end{rem}

\begin{rem}\label{sloperem}
The conditions (ii) and (iii) of the theorem are most restrictive in the mixed characteristic case. For example, if $n=p$ then $|p|\le\delta_f\le 1$, $s\notin p\bfZ\setminus\{0\}$, and $s=0$ can happen only when $\delta_f(y)=1$ or $\delta_f(y)=|p|$.
\end{rem}

\subsection{Local behaviour of the slopes}
The following result describes the restrictions $\delta_f$ satisfies locally at a point $y\in Y$. It summarizes \cite[Theorems~4.5.4, 4.6.4, 6.1.9]{CTT}.

\begin{theor}\label{maindif}
Let $f\:Y\to X$ and $\Gamma_f$ be as in \S\ref{convsec}, then

(i) For any point $y\in Y^{(1)}$ of type 1 one has that $\slope_y(\delta_f)=\delta^{\rmlog}_{\calO_y/\calO_{f(y)}}$ and $\delta_f(y)=|n_y|$.

(ii) For a non-boundary point $y\in Y$ of type 2 with $x=f(y)$ one has that $$2g(y)-2-n_y(2g(x)-2)=\sum_{v\in C_y}(-\slope_v\delta_f+n_v-1).$$ In particular, almost all slopes of $\delta_f$ at $y$ equal $n_y^i-1$, where $n_y^i$ is the inseparability degree of $\wHy/\wHx$.

(iii) The different behaves trivially outside of $\Gamma_Y$ in the sense that $\slope_v\delta_f=n_v-1$ for any direction $v$ not pointing towards $\Gamma_Y$.
\end{theor}

\begin{rem}
(i) We call part (ii) of the theorem local Riemann-Hurwitz formula. It reduces to the Riemann-Hurwitz formula for $\tilf_y\:C_y\to C_x$ when $f$ is residually tame at $y$, but contains information not encoded in $\tilf_y$ otherwise. We show in this paper that this information is related to the reduction of $\tau_f$.

(ii) The theorem implies the global Riemann-Hurwitz formula for $f$, which includes correction terms at the boundary points when $X$ and $Y$ are not proper, see \cite[Theorems 6.2.3 and 6.2.7]{CTT}.
\end{rem}

\section{Log reduction of nice curves}\label{logredapp}

\subsection{Nice $k$-analytic log curves}
By a {\em multipointed} nice $k$-analytic curve we mean a nice $k$-analytic curve $X$ provided with a finite set $D$ of {\em marked} points of type 1. An equivalent way to encode this datum is to consider the associated {\em nice log curve} $(X,M_X)$ with the logarithmic structure $M_X\into\calO_X$ induced by $D$, that is, $M_{X,x}=\calO_{X,x}\setminus\{0\}$ and $\oM_{X,x}=\bfN$ if $x\in D$, and $M_{X,x}=\calO_{X,x}^\times$, $\oM_{X,x}=1$ otherwise. Note that $(X,M_X)$ is log smooth over $k$ with the trivial log structure $k^\times\into k$.

\subsection{Semistable models}
A {\em semistable model} of $(X,D)$ is determined by a formal model $\gtX$ of $X$ such that the reduction map $X\to\gtX_s$ maps $D$ bijectively onto a set $D_s\subset\gtX_s$ of $\tilk$-smooth points. In this case, the closed immersion $D\into X$ extends to a closed immersion $\gtD\into\gtX$ with $\gtD$ a disjoint union of copies of $\Spf(\kcirc)$, and we call $(\gtX,\gtD)$ a semistable formal model of $(X,D)$. This can also be promoted to the logarithmic geometry by setting $M_{\gtX}=M_X\cap\calO_{\gtX}$. Loosely speaking, $M_{\gtX}$ is the subsheaf of $\calO_{\gtX}$ consisting of functions invertible on the complement of $\gtX_s\cup\gtD$. The main advantage of working on the log level is that $\gtX^\rmlog=(\gtX,M_{\gtX})$ is log smooth over $\gtS^\rmlog=(\gtS,M_{\gtS})$, where $\gtS=\Spf(\kcirc)$ and $M_{\gtS}=\kcirc\setminus\{0\}\into\kcirc$.

\subsection{Reduction}
Restricting everything to the closed fiber $s^\rmlog$ we obtain an $s^\rmlog$-smooth enhancement $\tilX^\rmlog=(\tilX,M_\tilX)$ of the closed fiber $\tilX=\gtX_s$. Clearly, $\tilX^\rmlog$ is the log reduction of $X$ in the sense of \S\ref{logredsec}, and the construction is functorial: if $f\:Y\to X$ is as in \S\ref{convsec} and $V_Y,V_X$ are compatible triangulations, then $f$ induces morphisms of the associated log enhanced models $\gtY^\rmlog\to\gtX^\rmlog$ and their closed fibers, yielding a log reduction morphism $\tilY^\rmlog\to\tilX^\rmlog$.

\bibliographystyle{amsalpha}
\bibliography{liftings}

\end{document}